\long\def\@makefnt#1{\parindent 1em\noindent
            \hb@xt@1.8em{\hss\@textsuperscript{}}#1}
\long\def\@ftntext#1{\insert\footins{%
    \reset@font\footnotesize
    \interlinepenalty\interfootnotelinepenalty
    \splittopskip\footnotesep
    \splitmaxdepth \dp\strutbox \floatingpenalty \@MM
    \hsize\columnwidth \@parboxrestore
    \color@begingroup
      \@makefnt{%
        \rule\z@\footnotesep\ignorespaces#1\@finalstrut\strutbox}%
    \color@endgroup}}%
\def\subjclass#1{%
  \@ftntext{2010 {\itshape Mathematics Subject Classification.}\enspace #1.}}
\def\keywords#1{%
  \@ftntext{{\itshape Key words and phrases.}\enspace #1.}}
\def\A{{\mathbb A}}
\def\B{{\mathbb B}}
\def\C{{\mathbb C}}
\def\D{{\mathbb D}}
\def\X{{\mathbb X}}
\def\E{{\mathbb E}}
\def\F{{\mathbb F}}
\def\G{{\mathbb G}}
\def\H{{\mathbb H}}
\def\AB{ {\mathbb A}\moins {\mathbb B}}
\def\cJ{{\mathcal J}}
\def\cP{{\mathcal P}}
\def\cT{{\mathcal T}}
\def\card{\mathop{\rm card}}
\def\Aut{\mathop{\rm Aut}}
\def\moins{\raise 1pt\hbox{{$\scriptstyle -$}}}
\def\plus{\raise 1pt\hbox{{$\scriptstyle +$}} }
\def\phi{\varphi}
\newtheorem{theorem}{Theorem}
\newtheorem{proposition}[theorem]{Proposition}
\newtheorem{lemma}[theorem]{Lemma}
\newtheorem{corollary}[theorem]{Corollary}
\newtheorem{definition}[theorem]{Definition}
\newtheorem{convention}[theorem]{Convention}
\newtheorem{example}[theorem]{Example}
\newtheorem{note}[theorem]{Note}
\newtheorem{notation}[theorem]{Notation}
\def\proof{\noindent{\bf Proof.\ }}
\def\qed{~\hbox{$\Box$}}
\def\Aut{\mathop{\rm Aut}}
\def\codim{\mathop{\rm codim}}
\def\rank{\mathop{\rm rank}}
\def\dim{\mathop{\rm dim}}
\def\Ker{\mathop{\rm Ker}}
\begin{document}

\title{\bf On Schur function expansions of Thom polynomials}

\author{\"Ozer \"Ozt\"urk \\
\small Department of Mathematics of Mimar Sinan Fine Arts University\\
\small \c C\i ra\u gan C., \c Ci\u gdem S., No. 1\\
\small 34349, Be\c sikta\c s, Istanbul, Turkey\\
\small ozer.ozturk@msgsu.edu.tr \and
Piotr Pragacz\thanks{Research supported by the MNiSW grant N N201 608040, 
and the Japanese JSPS Grant-in-Aid for Scientific Research (B) 22340007.}\\
\small Institute of Mathematics of Polish Academy of Sciences\\
\small \'Sniadeckich 8, 00-956 Warszawa, Poland\\
\small P.Pragacz@impan.pl}

\subjclass{05E05, 14N10, 57R45}

\keywords{Thom polynomial, singularity class, singularity, global singularity theory, cotangent map, degeneracy locus, 
${\cP}$-ideal, Schur function, resultant, recursion, Pascal staircase}

\date{}

\maketitle


\rightline{\it \small Why should we expect a city to cure us of our spiritual pains? Perhaps} 
\rightline{\it \small because we cannot help, loving our city like a family. But we still have}
\rightline{\it \small to decide which part of the city we love and invent the reasons why.}
\vskip5pt
\rightline{\small Orhan Pamuk, Istanbul: memories and the city.}

\vskip40pt

\begin{abstract}
We discuss computations of the Thom polynomials of singularity classes of maps
in the basis of Schur functions. We survey the known results about the bound
on the length and a rectangle containment for partitions appearing in such Schur 
function expansions. We describe several recursions for the coefficients. For some singularities, 
we give old and new computations of their Thom polynomials.
\end{abstract}

\section{Introduction}\label{intro}

A prototype of the formulas considered in the present paper, is the following classical result.
Let $f: M\to N$ be a holomorphic, surjective map of compact Riemann surfaces. For $x\in M$, we set
$$
e_x:=\hbox{number of branches of} \ f  \ \hbox{at} \ x.
$$  
Then the ramification divisor of $f$ is equal to
$$
\sum (e_x-1) x\,.
$$ 
The {\it Riemann-Hurwitz formula} asserts that
\begin{equation}\label{rh}
\sum_{x\in M} (e_x-1)=2g(M)-2-\deg(f)\bigl(2g(N)-2\bigr)\,.
\end{equation}
(See, e.g., \cite{Kl}.) The right-hand side of Eq.(\ref{rh}) can be rewritten as
$$
f^*c_1(N)-c_1(M)\,,
$$ 
and gives us the Thom polynomial of the singularity $A_1$ of maps between curves.

\smallskip

In general, according to the monograph \cite{AVGL}, the global behavior of singularities 
of maps $f: M\to N$ of complex analytic manifolds, 
is governed by their {\it Thom polynomials}. Knowing the Thom polynomial of 
a singularity class $\Sigma$, one can compute the cohomology class represented by the 
$\Sigma$-points of a map $f$. We shall recall the definition of a Thom polynomial in Section \ref{Thom}. 

The term ``Thom polynomial'' has nowadays rather wide meaning. In the present paper, however,
it will mean a classical Thom polynomial of the singularity classes of maps (cf. \cite{T}). We shall
work here with {\it complex} manifolds\footnote{A manifold here is always {\it smooth}.}.

An explicit\footnote{Even the word ``explicit'' has different meanings for different authors working on Thom polynomials.} 
computation of a Thom polynomial is usually a difficult task.
At first, the computations of Thom polynomials were performed in the
basis of monomials in the Chern classes. But around 2004, two papers: \cite{FK}
and \cite{PArx} appeared independently, with computations of some Thom polynomials
in the basis of {\it Schur functions}. (The two papers concerned different singularity classes.)
One should stress that even with a powerful theory of symmetric functions from \cite{M} and \cite{L},
a passage from the monomial basis to the Schur basis is rather difficult: it is possible ``in theory''
but it is rather difficult in practice (of course, we speak here about ``large'' expressions).

It is, by no means, reasonable to ask why to work with Schur function expansions?
One of the aims of the present paper is (to try) to answer this question. 
Of course, an important role of Schur functions in geometry was known earlier, e.g., 
by the Schubert Calculus (see also \cite{L1}, \cite{P}, \cite{FP} -- to mention just a few references).
The latter reference gives a wide geometric motivation of the importance and ubiquity of
Schur functions in algebraic geometry. 

A basic property of Schur function expansions of Thom polynomials is the nonnegativity
of the coefficients proved by Andrzej Weber and the second named author in \cite{PW} (see also \cite{PW2}). 
These positive coefficients often have a pleasant algebraic
structure, e.g., satisfy some recursions. This allow one to organize the computations
of them in a pretty systematic way. Among these coefficients, we find numbers appearing
in different contexts in enumerative geometry, e.g., complete quadrics (see \cite{PI22}).
More, as it follows from a recent paper \cite{MPW},
the positivity of the coefficients of Schur function expansions of classical Thom
polynomials leads to upper bounds for the coefficients of Legendrian Thom polynomials
expanded in an appriopriate basis.

Another feature comes from the fact that Thom polynomials are closely related with degeneracy
loci of the cotangent map
$$
f^*: T^*N_M \to T^*M
$$ 
(by $T^*N_M$ we denote the cotangent bundle of $N$ pull backed by $f$ to $M$). 
Polynomials supported on
such degeneracy loci were described using Schur functions in \cite{P}; this
helps to study the Schur function expansions of Thom polynomials of other singularity classes.
 
In the present article, we survey basically only those papers, where the Schur function
expansions of Thom polynomials play a significant role in the process of their computations
or/and help in understanding their structure.

In \cite{FK}, the authors computed the Thom polynomial of the second order Thom-Boardman
singularity classes 
$$
\Sigma^{i,j}: M^m \to N^{m-i+1}
$$ 
via its Schur function expansion, and conjectured the positivity of Schur function expansion 
for all Thom-Boardman singularity classes. 

In \cite{PArx}, the second author stated some formulas for Thom polynomials 
of singularities $I_{2,2}, A_3: M^m \to N^{m+k}$ (any $k$) and some partial result for $A_i: M^m \to N^{m+k}$
(any $i,k$). These expressions had the form of Schur function expansions. 
The details were given in \cite{PI22}, \cite{PA3} and \cite{PAi}. In Sections \ref{Schurexp}
and \ref{recurrences}, we discuss some essential computations from these papers. 

In \cite{O1}, \cite{O2}, the first author computed Schur function expansions for $A_4: M^m \to N^{m+k}$ ($k=2,3$) 
and $III_{2,3}: M^m \to N^{m+k}$ (any $k$).

\medskip

This paper is organized as follows.

In Section \ref{Schur}, we recall the definition and properties of Schur functions, including: cancellation--,
vanishing--, basis-- , and factorization property.

In Section \ref{Thom}, we recall the notion of a singularity class, and, following Thom \cite{T}, attach to a singularity class its 
Thom polynomial.

In Section \ref{pideals}, we discuss the $\cP$-ideals of singularity classes. From the 
structure of the $\cP$-ideal of $\Sigma^i$, we deduce some result on a rectangle containment for partitions appearing
in the Schur function expansion of a Thom polynomial of $\Sigma \subset \Sigma^i$ (Theorem \ref{rect}).

In Section \ref{single_orbits}, We discuss a way of computing of Thom polynomials of the closures
of single R-L orbits in a space of jets of maps: $({\bf C}^{\bullet},0)\to({\bf C}^{\bullet+k},0)$, called there ``singularities''
after \cite{Riminvens}.
This is a ``method of restriction equations'' that we learned from \cite{Riminvens}.

In Section \ref{ch_eu}, we collect formulas for the Chern and Euler classes of singularities,
and show by an example, how one can compute them.

In Section \ref{Schurexp}, we state some general properties of the Schur function expansions of Thom
polynomials of singularities. Theorem \ref{rect} is reinterpreted for singularities. We discuss
the Thom polynomial of $III_{2,3}$ for any $k$. For any $i,k$, we give the $1$-part of the
Thom polynomial of $A_i$. We discuss also recent results of F\'eher and Rim\'anyi \cite{FeRim} giving a bound
on the lengths of partitions appearing in Schur function expansions, and certain basic recursion (on $k$).

In Section \ref{recurrences}, we recall Pascal staircases, and survey Schur function expansions of Thom polynomials
of $I_{2,2}$ and $A_3$ from \cite{PI22} and \cite{PA3}. Their coefficients obey some (other) recursions on $k$. We provide 
details of two computations with extensive use of the algebra of Schur functions and multi-Schur functions.

In Section \ref{III33}, we discuss the Schur function expansions of the Thom polynomials of $III_{3,3}$.

In Section \ref{I23}, we discuss some properties of the Thom polynomials of $I_{2,3}$.

In the appendices (Section \ref{appendix1} and \ref{appendix2}), we give the Schur function expansions of the Thom polynomials 
of $III_{3,3}$ and $I_{2,3}$ for several $k$.

\smallskip

This is basically a survey paper. Some new material is gathered in the last four sections. 
We lectured on this material at IMPANGA seminars in Warsaw and Cracow. 

\medskip

\noindent
{\bf Acknowledgments} \ We gratefully thank Alain Lascoux. He taught the second named author the Schur 
functions in 1979, and discussed with him the Schur function expansions of Thom polynomials in 2004. 
This was the starting point of the project surveyed in the present paper. He also taught, in 2008, the first 
named author how to write clever algorithms for computations with Schur functions.

We thank Maxim Kazarian for mailing us \cite{Ka}. We are also grateful to Alexander Klyachko and Andrzej Weber for helpful 
discussions. Finally, we thank the referee whose comments led to the improvement of the exposition.

A part of the present article was written during the stay of the second named author at RIMS in Kyoto, in March 2011.
He thanks this institute, and especially Shigeru Mukai, for the warm welcome there.

\section{Schur functions}\label{Schur}

The main reference for this section, for the conventions and notation, is \cite{L}. This book studies (among others) 
multi-Schur functions which are a useful generalization of Schur functions. We shall need them in this paper. But
we start our discussion with Schur functions.

For $m\in {\bf N}$, by an alphabet $\A$ of cardinality $m$ we shall mean a finite set of
indeterminates $\A=\{a_1,\dots,a_m\}$. Sometimes, to point out the cardinality of an
alphabet $\{a_1,\dots,a_m\}$ we shall denote it by $\A_m$.

We shall often identify an alphabet
$\{a_1,\dots,a_m\}$ with the sum $a_1+\cdots +a_m$.

\begin{definition}\label{cf}
Given two alphabets $\A$, $\B$, the {\it complete functions} $S_i(\AB)$
are defined by the generating series (with $z$ an extra variable):

\begin{equation}
\sum S_i(\AB) z^i:=\prod_{b\in \B} (1-bz)/\prod_{a\in \A}
(1-az)\,.
\end{equation}
\end{definition}

We see that $S_i(\A-\B)$ interpolates between $S_i(\A)$ - the complete symmetric function of degree $i$ in $\A$ 
and $S_i(-\B)$ - the elementary symmetric function of degree $i$ in $\B$ times $(-1)^i$. For example, $S_3(\A-\B)$ is equal to
$$
S_3(\A-\B)=S_3(\A)-S_2(\A)\Lambda_1(\B)+S_1(\A)\Lambda_2(\B)-\Lambda_3(\B)\,,
$$
where $\Lambda_i(\B)$ denotes the $i$-th elementary symmetric function in $\B$.

\medskip

A weakly increasing sequence $(i_1,i_2,\ldots,i_s)$ of nonnegative integers is called a {\it partition}. 
The number it divides into parts, $|I|=i_1+i_2+\cdots +i_s$, is called the {\it weight} of $I$. The nonzero
$i_p$ are called the {\it parts} of $I$. The number of nonzero parts is called the {\it length} of $I$. 

Given two partitions $I=(i_1,i_2,\ldots,i_s)$ and $J=(j_1,j_2,\ldots,j_t)$,
we shall say that $I$ {\it is contained in} $J$, and write $I\subset J$, if for any $p=0,1,2,\ldots$, we have $i_{s-p} \le j_{t-p}$. 

 Following \cite{L}, we give

\begin{definition}\label{sf}
Given a partition $I=(i_1, i_2, \ldots, i_s)\in
{\bf N}^s$, and alphabets $\A$ and $\B$, the {\it Schur function}
$S_I(\A - \B)$ is 
\begin{equation}\label{schur}
S_I(\A - \B):= \bigl|
     S_{i_q+q-p}(\A \moins \B) \bigr|_{1\leq p,q \le s}  \, .
\end{equation}
\end{definition}
In other words, we put on the diagonal from to bottom: $S_{i_1}, S_{i_2}, \ldots, S_{i_s}$, and then, in each column, 
the indices of the successive $S_j$'s should increase by one from bottom to top.
For example, if $I=(1,3,3,4,5)$, then
$$
S_I(\AB)
=\begin{vmatrix}
S_1(\AB)  &  S_4(\AB)  & S_5(\AB)  & S_7(\AB) &  S_9(\AB) \\
1 & S_3(\AB) & S_4(\AB) & S_6(\AB) & S_8(\AB) \\
0 & S_2(\AB) & S_3(\AB) & S_5(\AB) & S_7(\AB) \\
0 & S_1(\AB) & S_2(\AB) & S_4(\AB) & S_6(\AB) \\
0 & 1 & S_1(\AB) & S_3(\AB) & S_5(\AB)
\end{vmatrix}\,.
$$

These functions are often called {\it supersymmetric Schur functions}
or {\it Schur functions in difference of alphabets}. See \cite{S}, \cite{BR}, \cite{P2},
\cite{PT}, \cite{M} and \cite{L} for their study.

\medskip

We have the following {\it cancellation property}: for alphabets $\A$, $\B$, $\C$,
\begin{equation}
S_I((\A + \C) - (\B + \C))=S_I(\A-\B)\,.
\end{equation}

We shall use the simplified notation $i_1i_2\cdots i_s$ or
$i_1,i_2,\ldots , i_s$ for a partition $(i_1,i_2,\ldots,i_s)$
(the latter one if $i_s\ge 10$). Also, we shall write $(i^s)$
for the partition $(i,\ldots,i)$ ($s$ times).

A partition $I$ has a graphical representation due to Ferrers, called
its {\it diagram}: it is a diagram of left packed square boxes with
$i_1,i_2,\ldots,i_s$ the number of boxes in the successive rows.
For example, the diagram of the partition $(2,5,6,8)$ is:

$$
\unitlength=2mm
\begin{picture}(18,11)
\put(0,0){\line(0,1){8}}
\put(0,0){\line(1,0){16}}
\put(0,2){\line(1,0){16}}
\put(0,4){\line(1,0){12}}
\put(0,6){\line(1,0){10}}
\put(0,8){\line(1,0){4}}
\put(2,0){\line(0,1){8}}
\put(4,0){\line(0,1){8}}
\put(6,0){\line(0,1){6}}
\put(8,0){\line(0,1){6}}
\put(10,0){\line(0,1){6}}
\put(12,0){\line(0,1){4}}
\put(14,0){\line(0,1){2}}
\put(16,0){\line(0,1){2}}
\linethickness{1.1pt}
\put(0,0){\line(0,1){10}}
\put(0,0){\line(1,0){17.5}}
\end{picture}
$$

Given two partitions $I$ and $J$, if we put their diagrams 
in such a position that they share the lowest row and the leftmost
column, then `` $I\subset J$ '' iff the set of boxes of the diagram of $I$ is contained
in the set of boxes of the diagram of $J$. 

We record the following property:
\begin{equation}
S_I(\A - \B)= (-1)^{|I|}S_J(\B - \A)=S_J(\B^* - \A^*)\,,
\end{equation}
where $J$ is the conjugate partition of $I$ (i.e. the consecutive
rows of the diagram of~$J$ are the transposed columns of the
diagram of $I$), and
$\A^*$ denotes the alphabet $\{-a_1,-a_2,\dots \}$.

Fix two positive integers $m$ and $n$. Let $I$ be a partition.
Suppose that the diagram of $I$ and the following $(m,n)$-{\it hook}~:

\smallskip

$$
\unitlength=2mm
\put(4,10){\vector(1,0)5}
\put(4,10){\vector(-1,0)4}
\put(13,2){\vector(0,1)3}
\put(13,2){\vector(0,-1)2}
\put(4.5,10.3){\hbox to0pt{\hss$n$\hss}}
\put(13.3,2.3){\hbox{$m$}}
\linethickness{1.1pt}
\begin{picture}(18,14)
\put(0,0){\line(0,1){14}}
\put(0,0){\line(1,0){18}}
\put(9,5){\line(1,0){9}}
\put(9,5){\line(0,1){9}}
\end{picture}
$$
share the lowest row and the leftmost column. If the diagram of $I$ is contained in this hook, then we say
that the partition $I$ {\it is contained in} the $(m,n)$-hook.

\medskip

We record the following {\it vanishing property}.
Given alphabets $\A$ and $\B$ of cardinalities $m$ and $n$, if
the diagram of a partition $I$ is not contained in the $(m,n)$-hook,
then
\begin{equation}\label{van}
S_I(\A-\B)=0\,.
\end{equation}

For instance, $I=(2,5,6,8)$ is not contained in the $(2,4)$-hook
$$
\unitlength=2mm
\begin{picture}(18,12)
\put(4,10){\vector(1,0)4}
\put(4,10){\vector(-1,0)4}
\put(18,2){\vector(0,1)2}
\put(18,2){\vector(0,-1)2}
\put(4.2,10.3){\hbox to0pt{\hss$4$\hss}}
\put(18.5,1.5){\hbox{$2$}}
\put(8.7,4){*}
\put(0,0){\line(0,1){8}}
\put(0,0){\line(1,0){16}}
\put(0,2){\line(1,0){16}}
\put(0,4){\line(1,0){10}}
\put(0,6){\line(1,0){10}}
\put(0,8){\line(1,0){4}}
\put(2,0){\line(0,1){8}}
\put(4,0){\line(0,1){8}}
\put(6,0){\line(0,1){6}}
\put(8,0){\line(0,1){6}}
\put(10,0){\line(0,1){6}}
\put(12,0){\line(0,1){4}}
\put(14,0){\line(0,1){2}}
\put(16,0){\line(0,1){2}}
\linethickness{1.1pt}
\put(0,0){\line(0,1){12}}
\put(0,0){\line(1,0){20}}
\put(8,4){\line(1,0){12}}
\put(8,4){\line(0,1){8}}
\end{picture}
$$
Therefore $S_{2568}(\A_2-\B_4)=0$. This vanishing property is an immediate consequence of
the factorization property (see Eq.(\ref{Fact})). 

\medskip

Moreover, we have the following result.
\begin{theorem}\label{Tss}
If $\A$ and $\B$ are alphabets of cardinalities $m$ and $n$, then 
the Schur polynomials $S_I(\A-\B)$, where $I$ runs over partitions
contained in the $(m,n)$-hook, are ${\bf Z}$-linearly independent.
(I.e., they form a basis of the abelian group of supersymmetric Schur functions
in $\A$ and $\B$.)
\end{theorem}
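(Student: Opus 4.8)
The theorem packages two assertions, of which only linear independence carries content: spanning is automatic, because by the vanishing property (Eq.(\ref{van})) every $S_I(\A-\B)$ indexed by a partition outside the $(m,n)$-hook vanishes, so the hook-indexed functions already generate the whole group of supersymmetric Schur functions in $\A$ and $\B$. Moreover each $S_I(\A-\B)$ is homogeneous of degree $|I|$, so a hypothetical relation decouples across weights, and it suffices to refute a nontrivial relation $\sum_I c_I\,S_I(\A-\B)=0$ among hook partitions of one fixed weight.

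I would argue by induction on $m+n$. When $m=0$ or $n=0$ the function $S_I(\A-\B)$ is an ordinary Schur polynomial in $\B$ (via the conjugation property) or in $\A$, the hook condition becomes a bound on the length (resp.\ on the largest part), and independence is the classical statement for Schur polynomials in finitely many variables. For the inductive step, apply to the relation the substitution $a_m\mapsto b_n$. By the cancellation property this carries $S_I(\A-\B)$ to $S_I(\A'-\B')$ with $\A'=\{a_1,\dots,a_{m-1}\}$ and $\B'=\{b_1,\dots,b_{n-1}\}$ of cardinalities $m-1$ and $n-1$ (the common letter $b_n$ being cancelled). Using the vanishing property for these shorter alphabets together with the inductive hypothesis, I conclude $c_I=0$ for every $I$ contained in the $(m-1,n-1)$-hook, that is, for every hook partition whose diagram does \emph{not} contain the $m\times n$ rectangle.

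The partitions still possibly carrying a nonzero coefficient are exactly those whose diagram contains the $m\times n$ rectangle, and on these the factorization property (Eq.(\ref{Fact})) applies, giving $S_I(\A-\B)=\bigl(\prod_{a\in\A,\,b\in\B}(a-b)\bigr)\,S_P(\A)\,S_R(-\B)$, where $P$ collects, from the $m$ longest rows of $I$, the boxes lying to the right of the rectangle (so $\ell(P)\le m$) and $R$ collects the remaining, shorter rows (each of length at most $n$), and $I\mapsto(P,R)$ is a weight-additive bijection onto all such pairs. The surviving relation becomes $\bigl(\prod_{a,b}(a-b)\bigr)\sum_I c_I\,S_P(\A)\,S_R(-\B)=0$; since the polynomial ring is an integral domain and the resultant-type factor $\prod_{a,b}(a-b)$ is nonzero, it cancels, leaving $\sum_I c_I\,S_P(\A)\,S_R(-\B)=0$. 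Here the $S_P(\A)$ with $\ell(P)\le m$ are classically independent, and the $S_R(-\B)$ with parts $\le n$ reduce by the conjugation property to Schur polynomials in $\B$ of length $\le n$, again classically independent; as $\A$ and $\B$ involve disjoint indeterminates, the products $S_P(\A)\,S_R(-\B)$ are independent, forcing every remaining $c_I=0$.

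The main obstacle is the bookkeeping in the factorization step: one must match the precise shape, indexing and sign conventions of Eq.(\ref{Fact}) (bearing in mind the weakly-increasing convention for partitions used here), and verify that $I\mapsto(P,R)$ is genuinely a weight-additive bijection between hook partitions containing the rectangle and pairs of partitions constrained to the $m$-row and $n$-column strips. The remaining ingredients---the reduction to a single weight, the legitimacy of specializing $a_m\mapsto b_n$, and the cancellation of the nonzero product---are routine once the factorization is correctly set up.
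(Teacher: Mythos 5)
Your argument is correct, but there is nothing in the paper to compare it against: the paper does not prove Theorem \ref{Tss} at all, it simply refers the reader to \cite[Proposition 2.3]{PT}. So your proof is a genuinely self-contained alternative, and a pleasant one, since it uses only the three properties the paper itself records (cancellation, vanishing, factorization) plus classical linear independence of Schur polynomials in finitely many variables. The key points all check out. First, a partition fails to lie in the $(m-1,n-1)$-hook exactly when its diagram contains the rectangle $(n^m)$, so the $(m,n)$-hook splits precisely as you claim into the $(m-1,n-1)$-hook and the rectangle-containing partitions. Second, the specialization $a_m\mapsto b_n$ together with cancellation sends each $S_I(\A-\B)$ to $S_I(\A'-\B')$, the rectangle-containing terms die by the vanishing property for the smaller alphabets, and the inductive hypothesis kills the remaining coefficients. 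Third, for a hook partition $K$ containing $(n^m)$, writing its $m$ largest parts as $p_1+n,\dots,p_m+n$ and its remaining parts (all $\le n$ by the hook condition) as $R$ puts $K$ exactly in the form required by Eq.(\ref{Fact}); the assignment $K\mapsto(P,R)$ is injective with $|K|=|P|+|R|+mn$, so after cancelling the nonzero resultant in the integral domain ${\bf Z}[\A,\B]$, independence of the products $S_P(\A)S_R(-\B)$ (disjoint variables, classical independence on each side) forces the remaining coefficients to vanish.

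Two small points to make explicit in a write-up. The paper defines the $(m,n)$-hook only for positive $m$ and $n$, so your base case $m=0$ or $n=0$ needs the obvious degenerate conventions, under which the claim is exactly the classical independence of the $S_I(\A_m)$ with $\ell(I)\le m$, respectively of the $S_I(-\B_n)$ with all parts $\le n$. Also, your remark that spanning is automatic is valid only under the reading (which the theorem's parenthetical supports) that the group in question is the one generated by all the $S_I(\A-\B)$; if one instead means the full ring of supersymmetric polynomials, spanning is a separate, nontrivial statement that your argument does not address, though the theorem as stated does not require it.
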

For a proof, see, e.g., \cite[Proposition 2.3]{PT}. 

\begin{note} \rm We shall often identify partitions with their diagrams, as is
customary. 
\end{note}

\smallskip

It is handy to adopt the following

\begin{convention} \rm
Instead of introducing in the argument of a symmetric function,
formal variables which will be specialized, we write
$\fbox{$r$}$ for a variable which will be specialized to $r$
($r$ can be $2x_1$, $x_1+x_2$,\ldots).
For example,
$$
S_2(x_1+x_2)=x_1^2+ x_1x_2+ x_2^2 \ \ \ \hbox{but} \ \ \
S_2\bigl(\fbox{$x_1\plus x_2$}\bigr)=
(x_1+x_2)^2= x_1^2+ 2x_1x_2+ x_2^2\,.
$$
\end{convention}
This convention stems from \cite{L2} where the reader can find instructive examples of its use. 

\begin{definition}
Given two alphabets $\A,\B$, we set
\begin{equation}\label{res}
R(\A,\B):=\prod_{a\in \A,\, b\in \B}(a - b)\,,
\end{equation}
the resultant of $\A, \B$.
\end{definition}
Thus $R(\A,\B)$ is the resultant of the polynomials $R(x,\A)=R(\{x\},\A)$ and $R(x,\B)$.


\bigskip

We now record some properties of Schur functions that are used in our computations with Thom polynomials.

The first one is the following {\it linearity formula}. We have (see \cite{L})
\begin{equation}\label{bq}
S_{j}(-\E-\B_{n})=S_{j}(-\E-\B_{n-1})-b_n S_{j-1}(-\E-\B_{n-1})\,.
\end{equation}

This equality is used quite often to estimate the sizes of partitions indexing Schur function expansions
of Thom polynomials (see, e.g., \cite{PI22}, \cite{PA3}, \cite{O1}, \cite{O2}, \cite{O3}). It serves also to establish 
an extremely useful {\it Transformation Lemma} (see Lemma \ref{TL}). 

\medskip

The second one is the following {\it factorization property} \cite{BR}.
For partitions $I=(i_1,\dots,i_m)$ and 
$J=(j_1,\dots, j_s)$, we have
\begin{equation}\label{Fact}
S_{(j_1,\dots,j_s,i_1+n,\dots,i_m+n)}(\A_m-\B_n)
=S_I(\A_m) \ R(\A_m,\B_n) \ S_J(-\B_n)\,.
\end{equation}

For example, with $m=4$, $n=2$, $I=(2,3)$, $J=(1,3)$, we have

        $$
\unitlength=2.5mm
\begin{picture}(40,12)
\put(0,0){\line(0,1){8}}
\put(0,0){\line(1,0){14}}
\put(0,2){\line(1,0){14}}
\put(0,6){\line(1,0){6}}
\put(0,8){\line(1,0){2}}
\put(2,0){\line(0,1){8}}
\put(4,0){\line(0,1){6}}
\put(6,0){\line(0,1){6}}
\put(10,0){\line(0,1){4}}
\put(12,0){\line(0,1){4}}
\put(14,0){\line(0,1){2}}
\put(0,4){\line(1,0){8}}
\put(18,5){$=$}
 \put(22,4.4){\line(1,0){6}}
\put(22,4.4){\line(0,1){4}}
\put(22,8.4){\line(1,0){2}}
\put(24,6.4){\line(0,1){2}}
\put(24,6.4){\line(1,0){4}}
\put(28,4.4){\line(0,1){2}}
\put(22,4){\line(1,0){8}}
\put(22,0){\line(1,0){8}}
\put(22,0){\line(0,1){4}}
\put(30,0){\line(0,1){4}}
\put(30.4,0){\line(0,1){4}}
\put(30.4,0){\line(1,0){6}}
\put(30.4,4){\line(1,0){4}}
\put(34.4,2){\line(0,1){2}}
\put(34.4,2){\line(1,0){2}}
\put(36.4,0){\line(0,1){2}}
\put(23,5){ $-\B_4$}
\put(25,1.5){$R$}
\put(32,1.5){ $\A_2$}
\linethickness{1.1pt}
\put(0,4){\line(1,0){8}}
\put(8,0){\line(0,1){6}}
\put(0,0){\line(0,1){10}}
\put(0,0){\line(1,0){16}}
\put(8,4){\line(1,0){8}}
\put(8,4){\line(0,1){6}}
\end{picture}
$$
$$S_{1367}(\A_2-\B_4)=S_{23}(\A_2)R(\A_2,\B_4)S_{13}(-\B_4)\, .$$
		
\smallskip

\noindent
This factorization property is useful to simplify the $h$-parts (cf. the end of Section \ref{pideals}) of Thom polynomials
(see Section \ref{recurrences}, and \cite{PI22}, \cite{PA3}, \cite{O1}, \cite{O2}). Cf. also \cite{algorithm}. 

\bigskip

We shall also need multi-Schur functions. Given $s$, two sets of alphabets $\{\A^1,\A^2,\ldots,\A^s\}$, $\{\B^1,\B^2,\ldots, \B^s\}$,
and partition $I=(i_1,\ldots,i_s)$, we define following \cite{L} the {\it multi-Schur function}
\begin{equation}
S_I(\A^1-\B^1,\ldots,\A^s-\B^s)=
\bigl|S_{i_q+q-p}(\A^q \moins \B^q) \bigr|_{1\leq p,q \le s}\,.
\end{equation}
In case where the alphabets are repeated, we indicate by a semicolon the corresponding bloc separation. For example, 
$$
S_{i,i;i}(\A-\C;\B-\D)=S_{i,i,i}(\A-\C,\A-\C,\B-\D)\,.
$$

\medskip

We record the following {\it Transformation Lemma} (see \cite[Lemma 1.4.1]{L})

\begin{lemma}\label{TL} Let $\D^0,\D^1,\ldots,\D^{s-1}$ be a family of alphabets such that $\card(\D^i)\le i$ for $0\le i\le s-1$. 
Then the multi-Schur function $S_I(\A^1-\B^1,\ldots,\A^s-\B^s)$ is equal to the determinant
$$
\bigl|S_{i_q+q-p}(\A^q \moins \B^q \moins \D^{s-p}) \bigr|_{1\leq p,q \le s}\,.
$$
\end{lemma}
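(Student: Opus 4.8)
The plan is to prove the Transformation Lemma by showing that adding the alphabets $\D^{s-p}$ to the arguments in each row performs a determinant-preserving operation, namely adding multiples of lower rows to the upper rows. The key tool is the linearity formula (\ref{bq}), which tells us how a complete function $S_j$ changes when we append an element to a subtracted alphabet. First I would record its iterated form: appending a full alphabet $\D$ with $\card(\D)=d$ to the subtracted part expresses $S_{j}(\A\moins\B\moins\D)$ as a $\Z$-linear combination $\sum_{t=0}^{d} c_t\, S_{j-t}(\A\moins\B)$, where $c_0=1$ and the $c_t$ for $t\ge 1$ are (signed) elementary symmetric functions in $\D$. The crucial structural point is that $c_0=1$: the leading term is always the original complete function with coefficient one, and all correction terms have \emph{strictly lower} index.

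Next I would expand the claimed determinant $\bigl|S_{i_q+q-p}(\A^q\moins\B^q\moins\D^{s-p})\bigr|$ along this observation, row by row. Fix a row index $p$; its entries are $S_{i_q+q-p}(\A^q\moins\B^q\moins\D^{s-p})$ for $q=1,\dots,s$. Since $\card(\D^{s-p})\le s-p$, applying the iterated linearity formula rewrites each entry of row $p$ as $S_{i_q+q-p}(\A^q\moins\B^q)$ plus a combination of $S_{i_q+q-p-t}(\A^q\moins\B^q)$ for $t\ge 1$. But $S_{i_q+q-p-t}(\A^q\moins\B^q)$ is exactly the $(p+t,q)$-entry of the \emph{original} matrix $\bigl|S_{i_q+q-p}(\A^q\moins\B^q)\bigr|$, i.e.\ an entry sitting $t$ rows below in the same column. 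Crucially, the correction coefficients $c_t$ depend only on $\D^{s-p}$ and the shift $t$, not on the column $q$, so the whole of row $p$ is modified by adding scalar multiples of rows $p+1,p+2,\dots$ of the original matrix. Since $\card(\D^{s-p})\le s-p$, the needed rows $p+1,\dots,s$ all exist, and the operation stays within the matrix.

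The conclusion then follows because each such row operation---adding a scalar multiple of a lower row to an upper row---leaves the determinant unchanged. Performing these operations from the top row downward (or invoking the fact that the transition is unitriangular, since each row's leading contribution is the identity with $c_0=1$), I transform the claimed determinant into the determinant defining $S_I(\A^1-\B^1,\dots,\A^s-\B^s)$, which proves the equality. The main obstacle I anticipate is purely bookkeeping: one must verify that the correction coefficients are genuinely column-independent (this is what makes the move a legitimate row operation rather than an arbitrary entrywise perturbation) and that the index bound $\card(\D^{s-p})\le s-p$ is exactly what guarantees no correction term reaches below the last row or produces an out-of-range index. Once the uniformity across columns is established, the determinant invariance is immediate and the proof closes.
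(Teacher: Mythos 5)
Your proof is correct. The paper itself offers no proof of this lemma (it states ``We leave it to the reader to prove this result,'' deferring to Lascoux's Lemma 1.4.1), and your argument is exactly the intended one: by the addition formula, each entry of row $p$ satisfies $S_{i_q+q-p}(\A^q\moins\B^q\moins\D^{s-p})=\sum_{t\ge 0}S_t(-\D^{s-p})\,S_{i_q+q-p-t}(\A^q\moins\B^q)$, where the coefficients $S_t(-\D^{s-p})$ are column-independent and vanish for $t>\card(\D^{s-p})\le s-p$, so the modified matrix is a unitriangular matrix times the original one (equivalently, is obtained by adding multiples of lower original rows to upper rows, processed top-down), and the determinants coincide.
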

In other words, one does not change the value of a multi-Schur function by replacing in row $p$ the difference $\A-\B$ by 
$\A-\B-\D^{s-p}$. We leave it to the reader to prove this result. 

\section{Thom polynomials of singularity classes of maps}\label{Thom}

Fix $m,n,p \in {\bf N}$. Consider the space ${\cJ}^p({\bf C}^m_0,{\bf C}^n_0)$ of $p$-jets of analytic
functions from ${\bf C}^m$ to ${\bf C}^n$ which map $0$ to $0$. Consider the natural right-left action of the group $\Aut_m^p \times \Aut_n^p$ on 
${\cJ}^p({\bf C}^m_0,{\bf C}^n_0)$, where $\Aut_n^p$ denotes the group of $p$-jets of automorphisms of $({\bf C}^n,0)$. By a {\it singularity class} we shall mean a closed algebraic right-left invariant subset of
${\cJ}^p({\bf C}^m_0,{\bf C}^n_0)$. Given complex analytic manifolds $M^m$ and $N^n$, a singularity class 
$\Sigma \subset {\cJ}^p({\bf C}^m_0,{\bf C}^n_0)$ defines the subset $\Sigma(M,N)\subset {\cJ}^p(M,N)$, where ${\cJ}^p(M,N)$ is the space of $p$-jets from $M$ to $N$.

\begin{theorem}\label{Thompol}
Let $\Sigma \subset {\cJ}^p({\bf C}^m_0,{\bf C}^n_0)$ be a singularity class.
There exists a universal polynomial ${\cT}^{\Sigma}$ over $\bf Z$
in $m+n$ variables $c_1,\ldots,c_m,c'_1,\ldots,c'_n$ which depends only on $\Sigma$, $m$ and $n$
such that for any complex analytic manifolds $M^m$, $N^n$ and for almost any map\footnote{The Riemann-Hurwitz formula quoted in Introduction
holds for any surjective $f$. In the theory of Thom polynomials we restrict ourselves only to {\it almost all} maps, i.e., the maps from
some open subset in the space of all maps.} $f: M\to N$,
the class of 
$$
\Sigma(f):=f_p^{-1}(\Sigma(M,N))
$$ 
is equal to
$$
{\cal T}^{\Sigma}(c_1(M),\ldots,c_m(M),f^*c_1(N),\ldots,f^*c_n(N)),
$$
where $f_p: M\to \cJ^p(M,N)$ is the $p$-jet extension of $f$.
\end{theorem}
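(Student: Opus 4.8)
The plan is to prove the statement first in a universal (equivariant) setting and then transport it to an individual map $f$ by transversality. Write $V:={\cJ}^p({\bf C}^m_0,{\bf C}^n_0)$ for the affine jet space, $G:=\Aut_m^p\times\Aut_n^p$ for the right-left group acting on it, and $c:=\codim_{\bf C}\overline{\Sigma}$. The first step is a homotopy-theoretic reduction of the structure group. Taking the linear part of a jet defines a surjection $G\to GL_m({\bf C})\times GL_n({\bf C})$ whose kernel consists of jets of automorphisms with identity linear part; this kernel is a unipotent complex algebraic group, hence contractible. Consequently $G$ deformation retracts onto $GL_m({\bf C})\times GL_n({\bf C})$, so that
$$
BG\simeq BGL_m({\bf C})\times BGL_n({\bf C}),\qquad H^*(BG;{\bf Z})={\bf Z}[c_1,\dots,c_m,c'_1,\dots,c'_n],
$$
where the $c_i$ and $c'_j$ are the universal Chern classes of the two factors.

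Second, I would construct the universal polynomial. Form the Borel construction $EG\times_G V$. Since $V$ is an affine space on which $G$ acts linearly, it is $G$-equivariantly contractible, so the projection $EG\times_G V\to BG$ is a homotopy equivalence. The closed invariant algebraic subset $\Sigma\subset V$ yields the closed subset $EG\times_G\overline{\Sigma}\subset EG\times_G V$, and taking (the sum over its top-dimensional components of) the cohomology class dual to it gives a well-defined element
$$
[EG\times_G\overline{\Sigma}]\in H^{2c}(EG\times_G V;{\bf Z})\cong H^{2c}(BG;{\bf Z}).
$$
Expressing this class as a polynomial in $c_1,\dots,c_m,c'_1,\dots,c'_n$ defines ${\cT}^{\Sigma}$; by construction it is an integral polynomial depending only on $\Sigma$, $m$ and $n$.

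Third, I would specialize to a given $f\colon M\to N$. The relevant jet bundle over $M$ has fibre $V$ and is associated, through the $G$-action, to the principal $GL_m({\bf C})\times GL_n({\bf C})$-bundle determined by the frame bundle of $TM$ together with the pullback under $f$ of the frame bundle of $TN$; this data is encoded by a classifying map $\phi\colon M\to BG$ with $\phi^*c_i=c_i(M)$ and $\phi^*c'_j=f^*c_j(N)$. The invariant set $\Sigma$ defines the subbundle $\Sigma(M,N)$, whose dual class is $\phi^*[EG\times_G\overline{\Sigma}]$ by naturality of the Borel construction. The jet extension $f_p$ is a section of this bundle, and by Thom's transversality theorem, for almost all $f$ the section $f_p$ is transverse to $\Sigma(M,N)$; hence $\Sigma(f)=f_p^{-1}(\Sigma(M,N))$ has pure codimension $c$ and Poincar\'e-dual class $f_p^*[\Sigma(M,N)]$. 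Combining these identifications with $\phi^*c_i=c_i(M)$ and $\phi^*c'_j=f^*c_j(N)$ yields
$$
[\Sigma(f)]={\cT}^{\Sigma}\bigl(c_1(M),\dots,c_m(M),f^*c_1(N),\dots,f^*c_n(N)\bigr),
$$
as asserted.

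The main obstacle I anticipate is the transversality step together with the handling of the generally singular set $\Sigma$: one must verify that a generic $f$ makes $f_p$ transverse to $\Sigma(M,N)$ away from a locus of real codimension exceeding $2c$ (which the singular part of $\overline{\Sigma}$, of complex codimension $\ge c+1$, provides), so that $\Sigma(f)$ carries an unambiguous integral cohomology class of the expected degree even when $\overline{\Sigma}$ is not smooth, and that this class is genuinely the pullback of the equivariant dual class. By contrast, the group-theoretic reduction $BG\simeq BGL_m({\bf C})\times BGL_n({\bf C})$ and the identification of $H^*(BG)$ with a polynomial ring are the routine ingredients.
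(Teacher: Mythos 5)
The paper does not actually prove this statement: it is Thom's classical theorem, quoted with a citation to \cite{T}, so there is no internal proof to compare yours against. Your proposal is the standard modern argument via the Borel construction --- essentially the classifying-space formalism behind Kazarian's theory and the restriction-equation method of Rim\'anyi recalled in Section \ref{single_orbits} of the paper --- and its main structural steps are sound: the kernel of $G\to GL_m({\bf C})\times GL_n({\bf C})$ consists of jets with identity linear part, is unipotent and contractible, giving $H^*(BG;{\bf Z})\cong {\bf Z}[c_1,\ldots,c_m,c'_1,\ldots,c'_n]$; the equivariant dual class of $\Sigma$ defines $\cT^{\Sigma}$; and naturality under the classifying map of the pair $(TM,f^*TN)$ produces the asserted specialization $\phi^*c_i=c_i(M)$, $\phi^*c'_j=f^*c_j(N)$. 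One point you elide: to define the dual class of $EG\times_G\overline{\Sigma}$ one should pass to finite-dimensional algebraic approximations of $EG$ (frame bundles over large Grassmannians), where $EG\times_G\overline{\Sigma}$ is an honest closed subvariety of pure codimension $\codim\Sigma$ carrying a Borel--Moore fundamental class; this is routine, but it is where the integrality of $\cT^{\Sigma}$ and its independence of the approximation are actually checked.

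The genuine gap is the transversality step. Thom's transversality theorem is a statement about $C^\infty$ maps: jet-transverse maps form a residual subset of the space of all \emph{smooth} maps. Here $M$ and $N$ are complex analytic manifolds and $f$ is holomorphic; holomorphic maps cannot be perturbed freely, the space of holomorphic maps $M\to N$ is finite-dimensional when $M$ is compact, and it may contain no map at all whose $p$-jet extension is transverse to the strata of $\Sigma(M,N)$. So the sentence ``by Thom's transversality theorem, for almost all $f$ the section $f_p$ is transverse to $\Sigma(M,N)$'' is not justified in the category in which the theorem is stated. What saves the statement is the paper's deliberately weak footnote: ``almost any map'' means the maps belonging to some open subset of the space of all maps. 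You should therefore \emph{define} that open subset to be the set of holomorphic maps whose jet extension is transverse to the strata of a fixed Whitney stratification of $\Sigma(M,N)$ --- transversality to a closed stratified set is an open condition, though possibly an empty one --- and prove the identity of classes for exactly those maps, as the remainder of your third step does, using the fact that the singular locus of $\overline{\Sigma}$ has complex codimension strictly larger than $\codim\Sigma$, so that its preimage does not affect the class in degree $2\codim\Sigma$. With that reformulation (and with no claim of density, which is neither available nor asserted by the theorem), your argument is correct.
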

This is a theorem due to Thom, see \cite{T}.

If a singularity class $\Sigma$ is {\it stable} (e.g. closed under
the contact equivalence, see, e.g., \cite{FeRim}), then ${\cal T}^{\Sigma}$ depends on $c_i(TM-TN_M)$.

\smallskip

Let $f: M\to N$ be a map of complex analytic manifolds. In the present paper, we shall work with the cotangent map
\begin{equation}\label{cot}
f^*: T^*N_M \to T^*M\,,
\end{equation}
rather than with the tangent one.
Given a partition $I$, we define
\[S_I(T^*M - T^*N_M)\]
to be the effect of the following specialization of $S_I(\A \moins \B)$: the indeterminates of $\A$
are set equal to the Chern roots of $T^*M$, and the indeterminates of $\B$
to the Chern roots of $T^*N_M$.

Given a singularity class $\Sigma$, the Poincar\'e dual of $\Sigma(f)$,  for almost any map $f:M \to N$, will be written in the form 
\begin{equation}\label{konwencja1}
\sum_I \alpha_I S_I(T^*M - T^*N_M)
\end{equation}
with integer coefficients $\alpha_I$. 

Accordingly, we shall write 
\begin{equation}\label{konwencja2}
\mathcal{T}^{\Sigma}=\sum_I\alpha_I S_I\,, 
\end{equation}
where $S_I$ is identified with $S_I(\A \moins \B)$ for the universal Chern roots $\A$ and $\B$.

\medskip

For example, consider the singularity class $\Sigma=\Sigma^i$. So, $m-i \le n$, and looking at the $(m-i)$th
degeneracy locus of the cotangent map (\ref{cot}),
we have 
$$
\cT^{\Sigma^{i}}=S_{(n-m+i)^i}\,,
$$
the Giambelli-Thom-Porteous formula (see \cite{Po}).

A basic result on Schur function expansions of Thom polynomials of singularity classes is

\begin{theorem} (\cite{PW}) \ Let $\Sigma$ be a nontrivial stable singularity class. Then
for any partition $I$, the coefficient $\alpha_I$ in the Schur function
expansion of the Thom polynomial
$$
{\cal T}^{\Sigma}= \sum \alpha_I S_I \,,
$$
is nonnegative and $\sum_I \alpha_I >0$.
\end{theorem}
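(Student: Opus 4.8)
The plan is to derive the positivity from the geometry of degeneracy loci, via an effectivity principle in a space equipped with an algebraic cell decomposition. First I would work in the universal (equivariant) setting. Since $\Sigma$ is stable, $\cT^\Sigma$ depends only on $c_i(TM-TN_M)$, i.e. it is a polynomial in the difference of alphabets $\A-\B$ formed by the Chern roots of $T^*M$ and $T^*N_M$, living in the cohomology of the classifying space $BGL_m\times BGL_n$. By Theorem~\ref{Tss} the functions $S_I(\A-\B)$, with $I$ in the relevant hook, form a $\Z$-basis of this ring, so the expansion $\cT^\Sigma=\sum_I\alpha_I S_I$ exists and is unique; the entire content of the statement is the sign of the $\alpha_I$ together with strict positivity of their sum.

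Second, I would realize the $S_I(\A-\B)$ as classes of effective cycles in a suitable model. The natural candidate is a degeneracy-locus model: over a base carrying the universal Chern roots one takes a generic map of bundles $\phi\colon F\to E$ and passes to the associated flag (or Grassmann) bundle $\pi$, on which $\phi$ acquires a canonical filtration. Such a bundle carries an affine paving by relative Schubert cells, and the Giambelli--Thom--Porteous formula---already used above in the form $\cT^{\Sigma^i}=S_{(n-m+i)^i}$---identifies the closures of these cells with the supersymmetric Schur functions $S_I(\A-\B)$. The structural fact I would invoke is the standard principle that, for a variety with an algebraic cell decomposition into even-dimensional cells, the cell closures form an additive basis of the cohomology and the fundamental class of every closed irreducible subvariety is a \emph{nonnegative} integral combination of them.

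Third, I would exhibit $\cT^\Sigma$ as such an effective class. The singularity locus $\overline{\Sigma(f)}$ is, by construction, a genuine closed subvariety of the jet bundle, hence effective; the task is to transport its class into the Schubert ($=$ Schur) basis. Concretely I would build a correspondence lifting $\Sigma$ to the flag bundle $\pi$, push the fundamental class forward, and use that a proper pushforward of an effective class is again effective. Expanding the resulting class in the cell-closure basis yields $\alpha_I\ge 0$ for every $I$. For the strict inequality, observe that $\Sigma$ nontrivial forces $\cT^\Sigma\ne 0$, and since the $S_I$ are linearly independent, at least one $\alpha_I$ is nonzero; being a nonnegative integer it is then $\ge 1$, so $\sum_I\alpha_I>0$.

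The hard part is precisely the transport step in the third paragraph. A general singularity class does not sit inside a homogeneous space, so the naive assertion ``effective cycles expand nonnegatively in Schubert classes'' does not apply directly. The genuine work is to construct a correspondence between the jet bundle and the degeneracy-locus model for which (a) the pullbacks of the $S_I(\A-\B)$ are exactly the cell-closure classes and (b) the image of $\overline{\Sigma(f)}$ is an honest effective cycle computed by these cells. The supersymmetric nature of $S_I(\A-\B)$---reflecting that one works with the cotangent map $f^*\colon T^*N_M\to T^*M$ rather than with a single bundle---is what forces the degeneracy-locus model instead of a single Grassmannian, and it is here that the vanishing and factorization properties recorded earlier are needed to control which partitions $I$ can occur.
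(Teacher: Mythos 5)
There is a genuine gap, and it sits exactly where your proof must do its work. The ``standard principle'' invoked in your second paragraph --- that on a variety with an algebraic cell decomposition every closed irreducible subvariety expands \emph{nonnegatively} in the cell closures --- is false. Take the blow-up $X$ of ${\bf P}^2$ at $p=[1:0:0]$ with the Bialynicki--Birula decomposition induced by $[x_0:x_1:x_2]\mapsto[x_0:tx_1:t^3x_2]$: the two one-dimensional cell closures are the invariant line $L_0=\{x_0=0\}$ (class $H$) and the strict transform $\widetilde{L}$ of the line $\{x_1=0\}$ through $p$ (class $H-E$), so the exceptional curve, which is irreducible, has expansion $E=[L_0]-[\widetilde{L}]$ with a negative coefficient. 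Nonnegativity in a cell-closure basis is not a consequence of having an affine paving; for Grassmannians and flag varieties it follows from \emph{homogeneity} via the Kleiman--Bertini theorem \cite{Kl0}, which uses the transitive group action to move the dual cycle into general position. Your flag-bundle/degeneracy-locus model carries a paving but no transitive action, so the effectivity principle in your step two collapses; and step three, which you yourself call ``the genuine work'' (transporting $[\overline{\Sigma(f)}]$ into that model), is precisely the content of the theorem and is left undone. A plan whose structural principle is false as stated and whose key step is deferred is not a proof.

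The paper's argument (the sketch due to Klyachko and Kazarian) handles both points simultaneously, and differently from what you propose: one ``materializes'' the singularity class $\Sigma$ inside a sufficiently large \emph{Grassmannian} by means of a Veronese map, chosen so large that specializing $\cT^{\Sigma}$ in the Chern classes of the tautological quotient bundle $Q$ loses no Schur summand. The Giambelli formula then identifies each $S_I(Q)$ with an honest Schubert cycle, each coefficient $\alpha_I$ is read off by intersecting the effective cycle $[\Sigma]$ with the \emph{dual} Schubert cycle, and the Kleiman--Bertini theorem \cite{Kl0} --- available precisely because the Grassmannian is homogeneous --- puts the two cycles in general position, so that $\alpha_I$ becomes a set-theoretic intersection number, hence nonnegative. (The original proof in \cite{PW} is different again: it goes through the classification space of singularities and the Fulton--Lazarsfeld positivity theorem \cite{FL} for ample vector bundles.) Your closing argument for $\sum_I\alpha_I>0$ is fine in itself, but note that it rests on $\cT^{\Sigma}\ne 0$ for nontrivial $\Sigma$, which you assert without justification.
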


This result was conjectured in \cite{FK}. Thus, it is not obvious. But its proof is almost obvious. 
The original proof in \cite{PW} used the classification space of singularities 
and the Fulton-Lazarsfeld theorem \cite{FL}. We now give an outline of another proof (of nonnegativity only),
communicated to the second author by Klyachko (Ankara, 2006) and, independently, 
by Kazarian \cite{Ka}.

\medskip

\noindent
{\bf Sketch of proof of the theorem}  \ First, using some Veronese map, we ``materialize'' all singularity classes
in sufficiently large Grassmannians.

We fix a singularity class $\Sigma$ and take the Schur function expansion of ${\cal T}^\Sigma$.
We take sufficiently large Grassmannian containing $\Sigma$ and such that specializing
${\cal T}^\Sigma$ in the Chern classes of the tautological (quotient) bundle $Q$, 
we do not lose any Schur summand. 

We identify by the Giambelli formula (see \cite{F}, p.146 and \cite{FP}, p.18, p.27), a Schur polynomial of $Q$
with the corresponding Schubert cycle.

To test a coefficient in the Schur function expansion of ${\cal T}^\Sigma$, 
we intersect $[\Sigma]$ with the corresponding {\it dual} Schubert cycle
(see \cite{F}, p.150). Using the Bertini-Kleiman theorem \cite{Kl0}, we put the cycles in 
a general position, so that we can reduce to set-theoretic intersection, 
which is nonnegative.
\qed

\begin{note} \rm If $\alpha_I\ne 0$, then we shall say that $I$ belongs to the indexing set 
of the Schur function expansion of $\cT^{\Sigma}$, or that the partition $I$ appears in the Schur function
expansion of $\cT^{\Sigma}$, or just $I$ appears in $\cT^{\Sigma}$. 
\end{note}

\medskip

It appears that this positivity result can be used to find upper bounds for the coefficients
of expansions of Legendrian Thom polynomials in a suitable basis, see \cite{MPW}. For the Lagrangian Thom polynomials, this is
the basis of the so called $\widetilde{Q}$-functions, see \cite{MPW1}.

\medskip

We record now a variant valid for {\it not necessary} stable singularity classes.

\begin{theorem} (\cite{PW2}) \ Let $\Sigma$ be a nontrivial singularity class. Then
for any partitions $I,J$, the coefficient $\alpha_{I,J}$ in the Schur function
expansion of the Thom polynomial
$$
{\cal T}^{\Sigma}= \sum \alpha_{I,J} S_I(T^*M) S_J(TN_M)      
$$
is nonnegative, and $\sum_{I,J} \alpha_{I,J} >0$.
\end{theorem}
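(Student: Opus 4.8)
The statement is a variant of the previous theorem (the Pragacz--Weber positivity result) in which one no longer assumes the singularity class $\Sigma$ to be stable, so the Thom polynomial need not depend only on the differences $c_i(TM - TN_M)$. Accordingly, instead of expanding $\cT^{\Sigma}$ in the single-variable Schur functions $S_I(\A - \B)$, one must expand it in the products $S_I(T^*M)\,S_J(TN_M)$, indexed by \emph{pairs} of partitions. The plan is to run the same geometric argument as in the sketch of proof of the stable case, adapting the test-cycle mechanism to the richer indexing set.

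First I would ``materialize'' the (now possibly non-stable) singularity class in an appropriate ambient space via a Veronese-type embedding, exactly as in the stable sketch, so that $[\Sigma]$ becomes an effective cycle. The essential change is the choice of ambient space: since the two Chern-root alphabets $\A$ (for $T^*M$) and $\B$ (for $TN_M$) now enter independently, I would work in a product of Grassmannians, or equivalently a partial flag configuration, whose tautological bundles specialize to $T^*M$ and $TN_M$ separately. One must take these Grassmannians large enough that specializing the Chern classes in the tautological (quotient) bundles of the two factors loses no summand $S_I S_J$ of $\cT^{\Sigma}$; the $\Z$-linear independence guaranteed by Theorem~\ref{Tss}, applied on each factor, ensures that the coefficients $\alpha_{I,J}$ are well defined and recoverable.

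Next, I would identify each product $S_I(T^*M)\,S_J(TN_M)$ with the class of a Schubert variety in the product of Grassmannians, using the Giambelli formula on each factor (see \cite{F}, \cite{FP}); the product of two Schubert classes is the class of the corresponding product Schubert variety. To extract a given coefficient $\alpha_{I,J}$, I would intersect $[\Sigma]$ against the \emph{dual} product Schubert cycle, so that by the duality pairing the intersection number equals $\alpha_{I,J}$. Invoking the Bertini--Kleiman theorem \cite{Kl0} over the acting group, I would move the cycles into general position, reduce to a transverse (hence reduced, set-theoretic) intersection of effective cycles, and conclude that each $\alpha_{I,J}\ge 0$. Nontriviality of $\Sigma$ forces $[\Sigma]\ne 0$, whence $\sum_{I,J}\alpha_{I,J}>0$.

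The main obstacle I anticipate is not the positivity argument itself---which is essentially the same Bertini--Kleiman transversality idea---but setting up the product Grassmannian and its group action so that the two alphabets genuinely decouple and the test cycles are honestly dual \emph{and} movable. In the stable case the single combined bundle $TM - TN_M$ made the indexing set smaller and the dual cycles transparent; here one must verify that the Kleiman general-position argument still applies to the diagonal/product action acting on both factors simultaneously, and that no mixing between $\A$ and $\B$ reintroduces sign ambiguities. Once the correct homogeneous ambient space with a transitive (or suitably large) group action is in place, the remaining steps are a routine transcription of the stable proof.
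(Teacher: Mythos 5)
The paper does not actually prove this theorem: it is quoted from \cite{PW2}, and the only indication given of its proof is that \cite{PW2} works with classes of invariant cones in globally generated vector bundles, in the spirit of Fulton and Lazarsfeld \cite{FL}. The Grassmannian/dual-Schubert-cycle sketch you are transcribing is given in the paper only for the preceding theorem, i.e.\ in the stable case; so your proposal is an extension of that sketch, and it has a genuine gap.

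The gap is that your argument is formally symmetric in the four bundles $TM$, $T^*M$, $TN_M$, $T^*N_M$: nothing in it uses the specific choice of $T^*M$ and $TN_M$, whereas the paper warns immediately after the statement that this choice is essential. Indeed, run verbatim with tautological quotient bundles specializing instead to $TM$ and $T^*N_M$, your argument would ``prove'' nonnegativity of the coefficients in $\cT^{\Sigma}=\sum\beta_{I,J}\,S_I(TM)\,S_J(T^*N_M)$, which is false: already for $A_1$ with $m=n$ one has $\cT^{A_1}=c_1(f^*TN)-c_1(TM)=S_1(T^*M)+S_1(TN_M)=-S_1(TM)-S_1(T^*N_M)$. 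So the asymmetry must enter somewhere, and it enters exactly at the step you leave as a black box, the ``materialization''. In the universal non-stable situation $\Sigma$ is not an effective cycle in a product of Grassmannians: it sits as an invariant cone in the total space of the jet bundle $\bigoplus_{i\le p}{\rm Hom}({\rm Sym}^i A,B)$ over the product of Grassmannians (the finite-dimensional approximation of the Borel construction), where $A$ and $B$ are the bundles playing the roles of $TM$ and $f^*TN$. The Thom polynomial is the restriction of $[\Sigma_{\rm univ}]$ to the zero section, i.e.\ a cone class, and extracting a coefficient by pairing with a dual Schubert cycle amounts to integrating the top Chern class of this jet bundle over subvarieties of $\Sigma_{\rm univ}$; nonnegativity of such integrals requires the jet bundle to be globally generated. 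This holds precisely when $A^*$ and $B$ are globally generated, i.e.\ when $A^*$ is the dual of a tautological subbundle (specializing to $T^*M$) and $B$ is a tautological quotient bundle (specializing to $TN_M$); for the other choices it fails, as it must. Kleiman's theorem \cite{Kl0} cannot substitute for this ingredient: the group acts transitively on the base, but not on the total space of the jet bundle (it preserves the zero section), so general translates are not available where the cycle actually lives. This cone-class/global-generation mechanism is precisely the Fulton--Lazarsfeld-type argument of \cite{PW2} to which the paper defers.
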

(It is important that we use the cotangent bundle to the source $M$ and the tangent bundle to the target $N$.)
The latter result implies the former, see \cite{PW2}. This last paper contains also some variations 
on positivity of generalized Thom polynomials, and emphasizes the role of cone classes for
globally generated and ample vector bundles, following Fulton and Lazarsfeld.

\section{$\cP$-ideals of singularity classes}\label{pideals}

More generally, it is natural to consider the $\cP$-ideal of a singularity class $\Sigma$, denoted by ${\cP}^{\Sigma}$.
This is the subset in the polynomial ring ${\bf Z}[c_1,\ldots,c_m,c'_1\ldots,c'_n]$,
consisting of all polynomials $P$ which satisfy the following universality property. For any complex analytic manifolds $M^m$, $N^n$ 
and almost any map $f: M\to N$, 
$$
P(c_1(M),\ldots,c_m(M),f^*c_1(N),\ldots,f^*c_n(N))
$$ 
is supported on $\Sigma(f)$. (This means -- see \cite{P}, \cite{FP} -- that the class of a cycle on $M$ in $H(M,{\bf Z})$ 
is in the image of $H(\Sigma(f),{\bf Z})\to H(M,{\bf Z})$.)

\begin{note} \rm
These ideals were first studied (1988) in \cite{P} for the classes $\Sigma=\Sigma^i$. 
They were rediscovered (2004) in \cite{FeRim1} in the context of group actions.
\end{note}

For $\Sigma=\Sigma^i$, ${\cal P}^{\Sigma}$ is simply the ideal of polynomials
which -- after specialization to the Chern classes of $M$ and $N$ -- support
cycles in the locus $D$, where
$$
\dim\bigl(\Ker(f_*: TM \to TN_M)\bigr)\ge i \,.
$$
for almost any map $f: M \to N$. (This means that the class of a cycle on $M$ in $H(M,{\bf Z})$ 
is in the image of $H(D,{\bf Z})\to H(M,{\bf Z})$.)

Note that in terms of the cotangent map, $D$ is the locus where 
$$
\rank \bigl(f^*: T^*N_M \to T^*M\bigr) \le m-i\,,
$$
for almost any map $f: M \to N$.

Of course, the component of minimal degree of ${\cal P}^{\Sigma}$ is generated over ${\bf Z}$ by ${\cT^{\Sigma}}$.
Usefulness of $\cP$-ideals come from the following observation. Suppose that $\Sigma \subset \Sigma'$, where $\Sigma'$
is another singularity class.
Then ${\cT}^{\Sigma}$ belongs to ${\cP}^{\Sigma'}$. Thus if one knows the algebraic structure of ${\cP}^{\Sigma'}$,
one can use it to compute ${\cT}^{\Sigma}$. 
In this way, the degeneracy loci of the cotangent map (\ref{cot}) appear to be
useful objects to study Thom polynomials. 

Set ${\cP}^i:={\cP}^{\Sigma^i}$. By \cite{P} and \cite{P2}, one knows the algebraic structure of ${\cP}^i$,
i.e., a certain {\it finite} set of its algebraic generators (cf. \cite[Proposition 6.1]{P}), 
and its {\bf Z}-basis (cf. \cite[Proposition 6.2]{P}). The arguments combine geometry of
Grassmann bundles with algebra of Schur functions.

Before proceeding further, let us state the following result which is rather useful
to compute the Schur function expansions of Thom polynomials. Its setting is the
same as that of Theorem \ref{Thompol}.

\begin{theorem}\label{rect} (\cite{P}, \cite{PI22})
Suppose that a stable singularity class $\Sigma$ is contained in
$\Sigma^i$. Then all summands in the Schur function expansion of ${\cal T}^{\Sigma}$ 
are indexed by partitions containing $(n-m+i)^i$.
\end{theorem}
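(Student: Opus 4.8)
The plan is to exploit the inclusion $\Sigma \subset \Sigma^i$ together with the structure of the $\cP$-ideal $\cP^i = \cP^{\Sigma^i}$. By the observation at the end of Section \ref{pideals}, since $\Sigma \subset \Sigma^i$, the Thom polynomial $\cT^{\Sigma}$ belongs to $\cP^i$. The key input is the known $\mathbf{Z}$-basis of $\cP^i$ coming from \cite[Proposition 6.2]{P} (and \cite{P2}): I expect this basis to consist precisely of the Schur functions $S_J(T^*M - T^*N_M)$ with $J$ ranging over partitions \emph{containing} the rectangle $(n-m+i)^i$. Indeed the minimal-degree generator of $\cP^i$ is the Giambelli--Thom--Porteous class $\cT^{\Sigma^i} = S_{(n-m+i)^i}$, the Schur function indexed by the $i \times (n-m+i)$ rectangle, and the whole ideal should be spanned over $\mathbf{Z}$ by the Schur functions whose diagram contains this rectangle.

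First I would recall from \cite{P} the explicit description of the $\mathbf{Z}$-basis of $\cP^i$ as a submodule of the ring of supersymmetric Schur functions. The geometric source is the $(m-i)$th degeneracy locus of the cotangent map (\ref{cot}), equivalently the locus $D$ where $\rank(f^*) \le m-i$, whose associated ideal of supported classes is governed by Schur functions indexed by partitions dominating the rectangle. Second, I would invoke the basis theorem, Theorem \ref{Tss}, which guarantees that the Schur functions $S_I(\A-\B)$ for $I$ in the relevant hook are $\mathbf{Z}$-linearly independent; this is what lets me read off coefficients uniquely. Third, writing
$$
\cT^{\Sigma} = \sum_I \alpha_I S_I
$$
and using $\cT^{\Sigma} \in \cP^i$, I expand $\cT^{\Sigma}$ in the $\mathbf{Z}$-basis of $\cP^i$. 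Since every basis element of $\cP^i$ is a Schur function indexed by a partition containing $(n-m+i)^i$, and since the Schur functions are linearly independent, the uniqueness of the expansion forces $\alpha_I = 0$ whenever $I \not\supset (n-m+i)^i$. Hence every partition appearing in the expansion contains the rectangle $(n-m+i)^i$, as claimed.

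The main obstacle is the precise identification of the $\mathbf{Z}$-basis of $\cP^i$ with ``Schur functions indexed by partitions containing the rectangle.'' This is not formal: it rests on the detailed Grassmann-bundle computation of \cite[Propositions 6.1, 6.2]{P}, where one shows that the classes supported on the degeneracy locus $D$ are exactly the $\mathbf{Z}$-span of these rectangle-containing Schur functions, and not some larger or skewed set. Once that structural fact is granted, the argument is essentially a change-of-basis and linear-independence bookkeeping. A secondary subtlety is ensuring the specialization $S_I \mapsto S_I(T^*M - T^*N_M)$ respects the combinatorial containment statement uniformly in $m,n$; here the cancellation and vanishing properties of Section \ref{Schur} (in particular the $(m,n)$-hook constraint) keep the bookkeeping consistent and prevent spurious nonrectangular terms from surviving.
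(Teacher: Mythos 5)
Your overall skeleton---use $\Sigma\subset\Sigma^i$ to get $\cT^{\Sigma}\in{\cP}^i$, then exploit the structure of ${\cP}^i$ and linear independence of Schur functions---is the same as the paper's, but the structural fact you feed into it is false, and the falsity sits exactly where the difficulty of the theorem lives. You claim that the ${\bf Z}$-basis of ${\cP}^i$ from \cite[Proposition 6.2]{P} consists of the Schur functions $S_J(\A-\B)$ with $J\supset(n-m+i)^i$, i.e.\ that ${\cP}^i$ is the ${\bf Z}$-span of rectangle-containing supersymmetric Schur functions. This cannot be what that proposition says, because ${\cP}^i$ is an \emph{ideal} of the full ring ${\bf Z}[c_1,\ldots,c_m,c'_1,\ldots,c'_n]$, hence closed under multiplication by $c_1$ and $c'_1$, whereas the ${\bf Z}$-span of the $S_J(\A-\B)$'s (the supersymmetric subring, with basis as in Theorem \ref{Tss}) is not once $i<m$. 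Concretely, take $m=n=2$, $i=1$: the Thom--Porteous class $S_1(\A-\B)=c_1-c'_1$ lies in ${\cP}^1$, hence so does $c_1S_1(\A-\B)=c_1^2-c_1c'_1$; but the degree-two supersymmetric span is generated by $S_2(\A-\B)=c_1^2-c_2-c_1c'_1+c'_2$ and $S_{11}(\A-\B)=c_2-c_1c'_1+{c'_1}^2-c'_2$, and $c_1^2-c_1c'_1$ is visibly not a ${\bf Z}$-combination of these. So any correct ${\bf Z}$-basis of ${\cP}^i$ must contain elements outside the supersymmetric subring (products of Schur functions of $\A-\B$ with Schur functions of $\A$ alone), and once such mixed basis elements are present, your step ``every basis element is a rectangle-containing Schur function, hence $\alpha_I=0$ by linear independence'' collapses: comparing Schur expansions cannot by itself decide membership of a supersymmetric polynomial in an ideal of the larger ring.

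What your argument implicitly needs is that the rectangle-containing $S_J(\A-\B)$'s form a ${\bf Z}$-basis of the \emph{intersection} of ${\cP}^i$ with the supersymmetric subring---and that is essentially the theorem itself, not a quotable preliminary. The paper splits it into two statements: Proposition \ref{contains}, that every $S_I(\A-\B)$ with $I\supset(n-m+i)^i$ lies in ${\cP}^i$ (proved geometrically, via a desingularization of $D$ in a product of Grassmann bundles and pushforward formulas), and the genuinely hard Proposition \ref{no}, that no nonzero ${\bf Z}[c_1,\ldots,c_m]$-linear combination of the $S_I(\A-\B)$'s with $I\not\supset(n-m+i)^i$ lies in ${\cP}^i$ (proved by interpreting ${\cP}^i$ as a generalized resultant together with a specialization trick, the ``Claim'' on p.~164 of \cite{P2}). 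The paper then concludes by writing $\cT^{\Sigma}$ as the sum of its rectangle-containing part and the rest: the first part lies in ${\cP}^i$ by Proposition \ref{contains}, hence so does the rest, which is therefore zero by Proposition \ref{no}. Your proposal silently absorbs both propositions---above all Proposition \ref{no}---into a basis claim that is not true as stated; repairing it means proving those two propositions, at which point you are reproducing the paper's proof rather than shortcutting it.
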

Thus the partitions not containing this rectangle cannot appear in the Schur function
expansion of ${\cT}^{\Sigma}$. 

This result seems to be quite obvious. However, its proof is not obvious. Let $\A$ and $\B$ 
be two alphabets such that 
$$
\sum c_i=\prod_{a\in \A} (1+a) \ \ \ \hbox{and} \ \ \  \sum c'_j=\prod_{b\in \B} (1+b).
$$
We have
\begin{proposition}\label{no}
No nonzero ${\bf Z}[c_1,\ldots,c_m]$-linear combination of the Schur functions $S_I(\A-\B)$'s, where all $I$'s do not 
contain $(n-m+i)^i$, belongs to ${\cP}^{i}$.
\end{proposition}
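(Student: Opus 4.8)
The plan is to understand precisely what structure the $\cP$-ideal ${\cP}^i$ imposes, and to exploit the basis described in \cite{P} (Proposition 6.2) together with the factorization and vanishing properties of Schur functions recorded above. First I would recall that ${\cP}^i$ is the ideal supporting cycles in the locus where $\rank(f^*: T^*N_M\to T^*M)\le m-i$, and that the generators of ${\cT}^{\Sigma^i}=S_{(n-m+i)^i}$ occupy the bottom degree. The key observation is that the elements of ${\cP}^i$, when expanded in the supersymmetric Schur basis $S_I(\A-\B)$, are built out of the Giambelli--Thom--Porteous determinant and hence are governed by the $(m,n)$-hook geometry and by a rectangular containment constraint coming from the degeneracy locus.

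The main technical step is to set up a suitable pairing or specialization that detects the coefficient of any $S_I(\A-\B)$ with $I\not\supset(n-m+i)^i$. Concretely, I would specialize the alphabets $\A$ and $\B$ so that the degeneracy-locus condition becomes transparent: by the factorization property (\ref{Fact}), a Schur function $S_I(\A_m-\B_n)$ whose index contains the rectangle $(n-m+i)^i$ factors as a product involving $R(\A_m,\B_n)$ and Schur functions in the separated alphabets, whereas an index $I$ \emph{not} containing this rectangle fails to produce the full resultant factor. Since every element of ${\cP}^i$ must, by its defining support property, carry this resultant factor (this is exactly the algebraic shadow of being supported on the locus where the rank drops by $i$), any ${\bf Z}[c_1,\ldots,c_m]$-linear combination lying in ${\cP}^i$ must be expressible entirely in terms of Schur functions whose indices contain $(n-m+i)^i$. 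The linear independence guaranteed by Theorem \ref{Tss} then forces the coefficients of all the ``bad'' $S_I$ (those with $I\not\supset(n-m+i)^i$) to vanish.

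The hard part will be making rigorous the claim that membership in ${\cP}^i$ genuinely forces the resultant factor $R(\A_m,\B_n)$ to divide the expansion, rather than merely being compatible with it. I expect this to rest on the explicit ${\bf Z}$-basis of ${\cP}^i$ from \cite[Proposition 6.2]{P}: one shows that every basis element already contains the rectangle $(n-m+i)^i$ in its index (equivalently, factors through $R(\A_m,\B_n)$), so that the ${\bf Z}[c_1,\ldots,c_m]$-module generated by them lands inside the span of such ``rectangular'' Schur functions. Multiplying such a Schur function by a polynomial in the $c_i$ (that is, by symmetric functions of $\A$ alone) preserves the rectangular containment, by the Pieri/Littlewood--Richardson rule applied on the $\A$-side, since adding boxes to a diagram that already contains $(n-m+i)^i$ keeps it containing that rectangle. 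Hence no cancellation can ever manufacture a surviving $S_I$ with $I\not\supset(n-m+i)^i$.

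To finish, I would combine these pieces: given a nonzero combination $\sum_I \gamma_I(\A)\, S_I(\A-\B)$ with each $I\not\supset(n-m+i)^i$ and $\gamma_I\in{\bf Z}[c_1,\ldots,c_m]$, suppose it lies in ${\cP}^i$. By the basis argument it must equal a ${\bf Z}[c_1,\ldots,c_m]$-combination of rectangular Schur functions; subtracting, Theorem \ref{Tss} (linear independence of the $S_I(\A-\B)$ over the hook) applied coefficientwise in the $\A$-variables yields $\gamma_I=0$ for all $I$, contradicting nonvanishing. The subtlety to watch is that Theorem \ref{Tss} is stated for ${\bf Z}$-linear independence, so I would need the mild upgrade that the $S_I(\A-\B)$ remain independent as a $\,{\bf Z}[c_1,\ldots,c_m]$-module basis; this follows by bidegree/triangularity considerations, comparing the $\B$-free leading terms. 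This upgrade is routine but is the one place where care is required.
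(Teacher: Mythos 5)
You correctly isolate the ${\bf Z}[c_1,\ldots,c_m]$-linear independence of the hook-indexed $S_I(\A-\B)$ as the point needing care, but this ``mild upgrade'' of Theorem \ref{Tss} is not routine: it is false, and its failure is exactly the difficulty that Proposition \ref{no} exists to overcome. Multiplying the generating series of Definition \ref{cf} by $\prod_{a\in\A}(1-az)$ gives, for every $k$,
\begin{equation*}
\sum_{j=0}^{k}(-1)^j\,\Lambda_j(\A)\,S_{k-j}(\A-\B)=(-1)^k\Lambda_k(\B)\,,
\end{equation*}
which vanishes as soon as $k>n$. For $m=n=2$ and $k=3$ this reads $S_3-c_1S_2+c_2S_1=0$; taking $i=2$, the partitions $(1),(2),(3)$ all lie in the $(2,2)$-hook and none contains the rectangle $(2,2)$, so this is a nontrivial ${\bf Z}[c_1,c_2]$-relation among precisely your ``bad'' Schur functions (the $\B$-free parts cancel as well, since $\Lambda_3(\A)=0$, so no leading-term or triangularity argument can rescue independence). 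The hook Schur functions merely \emph{span} ${\bf Z}[c_1,\ldots,c_m,c'_1,\ldots,c'_n]$ over ${\bf Z}[c_1,\ldots,c_m]$; they are far from a basis, expansions are non-unique, and your final step (``subtract and compare coefficients'') proves nothing. Worse, the statement your strategy actually requires --- that the ${\bf Z}[c]$-span of the rectangle-containing Schur functions meets the ${\bf Z}[c]$-span of the remaining ones only in $0$ --- is, in view of Proposition \ref{contains}, logically equivalent to Proposition \ref{no} itself, and the basis theorem \cite[Proposition 6.2]{P} you invoke is proved by means of (an equivalent of) this very proposition; the reduction is circular. This is why the paper's proof leaves formal module theory altogether: one interprets $\cP^i$ as a ``generalized resultant'', using that every element of $\cP^i$ must vanish identically under the specializations $\A=\A'+\C$, $\B=\B'+\C$ with $\card\,\C=m-i+1$ (these correspond to maps with $\dim\Ker(df)\le i-1$ everywhere, for which $\Sigma^i(f)=\emptyset$), and then shows by the specialization trick of the ``Claim'' on p.~164 of \cite{P2} that a nonzero bad combination cannot die under such a substitution, whereas every rectangle-containing $S_I(\A-\B)$ does: by the cancellation property it becomes $S_I(\A'-\B')$, which vanishes by (\ref{van}) since $I$ does not fit in the $(i-1,n-m+i-1)$-hook.

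Two earlier assertions are also incorrect, for the same underlying reason. First, containment of $(n-m+i)^i$ does not produce a factor $R(\A_m,\B_n)$: the factorization property (\ref{Fact}) applies only when $I$ contains the full rectangle $(n^m)$, and for $i<m$ the Thom polynomial $S_{(n-m+i)^i}(\A-\B)\in\cP^i$ has degree $i(n-m+i)<mn=\deg R(\A,\B)$, so elements of $\cP^i$ need not be divisible by the resultant at all --- such divisibility characterizes $\cP^m$, not $\cP^i$. Second, there is no ``Pieri/Littlewood--Richardson rule on the $\A$-side'': a product $S_p(\A)\cdot S_I(\A-\B)$ is in general not \emph{any} ${\bf Z}$-linear combination of Schur functions $S_J(\A-\B)$, because it violates the cancellation property; for instance, with $m=n=2$, $c_1S_1(\A-\B)$ lies outside the span of $S_2(\A-\B)$ and $S_{11}(\A-\B)$. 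Consequently no ${\bf Z}$-basis of the ideal $\cP^i$ can consist of bare Schur functions $S_I(\A-\B)$ (the ideal is stable under multiplication by $c_1$, the span is not), contrary to what your third paragraph presupposes: the basis of \cite[Proposition 6.2]{P} necessarily involves products of rectangle-containing $S_I(\A-\B)$ with Schur functions of $\A$ and of $\B$ separately.
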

The idea of the proof is to interpret ${\cal P}^i$ as a ``generalized resultant'', and use some specialization trick.
For details, we refer the reader to the proof of ``Claim'' on p. 164 in \cite{P2}.

Thus, in particular, no nonzero ${\bf Z}$-linear combination of the $S_I(\A-\B)$'s, where all $I$'s do not 
contain $(n-m+i)^i$, belongs to ${\cal P}^i$.

\smallskip

Also, we have
\begin{proposition}\label{contains}
Any $S_I(\A-\B)$, where $I$ contains $(n-m+i)^i$ belongs to ${\cal P}^i$.
\end{proposition}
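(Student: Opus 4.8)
$\textbf{Proof plan for Proposition \ref{contains}.}$ The goal is to show that every Schur function $S_I(\A-\B)$ with $I \supseteq (n-m+i)^i$ lies in the $\cP$-ideal $\cP^i$. The plan is to reduce an arbitrary such $I$ to the base case via the factorization property, and then recognize the base case as a known generator of $\cP^i$. First I would invoke the containment hypothesis: since $I$ contains the rectangle $(n-m+i)^i$, its first $i$ parts are each at least $n-m+i$, so $I$ has the form $(j_1,\dots,j_s,i_1+(n-m+i),\dots,i_i+(n-m+i))$ for suitable nonnegative integers, where the $j$'s record parts below the rectangle and the $i_p$'s record the overhang above it. This is precisely the shape to which the factorization property Eq.(\ref{Fact}) applies, with $\B=\B_{m-i}$ playing the role of the subtracted alphabet (the relevant $(m,n)$-hook geometry makes $n = m-i$ the right cardinality here, so that the resultant $R$ and the two flanking Schur functions appear).

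Next I would apply Eq.(\ref{Fact}) to write $S_I(\A-\B)$ as a product $S_{I'}(\A')\, R(\A',\B')\, S_J(-\B')$ of a Schur polynomial in the tautological/kernel bundle, the resultant factor, and a Schur polynomial in the cokernel alphabet. The key observation is that the resultant factor $R(\A',\B')$ is exactly the class that, geometrically, vanishes on the degeneracy locus $D$ where $\rank(f^*)\le m-i$; after specialization to Chern classes it is (up to the generating structure recalled from \cite{P}, \cite{P2}) a generator of $\cP^i$. Since $\cP^i$ is an ideal, multiplying this generator by the arbitrary polynomial factors $S_{I'}(\A')$ and $S_J(-\B')$ keeps the product inside $\cP^i$. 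This is the heart of the argument: the containment of the rectangle is precisely what guarantees that the resultant factor splits off.

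The base case worth stating explicitly is $I=(n-m+i)^i$ itself, where $S_I(\A-\B)=\cT^{\Sigma^i}$ by the Giambelli-Thom-Porteous formula recalled in Section \ref{Thom}; by the definition of the $\cP$-ideal (the minimal-degree component is generated by $\cT^{\Sigma^i}$), this certainly lies in $\cP^i$. For general $I$ the factorization shows $S_I$ is a $\cP^i$-multiple of this, or more precisely a product involving the resultant generator, so membership follows from the ideal property. An alternative to the factorization route is to argue directly from the known $\Z$-basis of $\cP^i$ in \cite[Proposition 6.2]{P}: one checks that the $S_I(\A-\B)$ with $I \supseteq (n-m+i)^i$ are exactly the declared basis elements (or $\Z$-combinations thereof), which would give the statement immediately.

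The main obstacle I anticipate is bookkeeping rather than conceptual: matching the cardinalities of the alphabets so that the factorization Eq.(\ref{Fact}) applies with the correct hook parameters, and confirming that the resultant factor produced really is one of the algebraic generators of $\cP^i$ identified in \cite{P}, \cite{P2} rather than merely a superficially similar class. Because Proposition \ref{no} is the converse obstruction (no combination avoiding the rectangle can lie in $\cP^i$) and is proved via the same ``generalized resultant'' interpretation, I would expect the cleanest writeup to lean on that same resultant description of $\cP^i$, so that Propositions \ref{no} and \ref{contains} together pin down $\cP^i$ as precisely the span of the $S_I(\A-\B)$ with $I \supseteq (n-m+i)^i$.
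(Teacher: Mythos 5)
Your central mechanism does not apply. The factorization property Eq.(\ref{Fact}), for alphabets of cardinalities $\card(\A)=m$ and $\card(\B)=n$, splits off a resultant only when the $m$ largest parts of the partition are all $\ge n$, i.e. only when $I$ contains the \emph{full} rectangle $(n^m)$. Containing $(n-m+i)^i$ is a much weaker condition whenever $i<m$: already for $m=n=2$, $i=1$ the partition $I=(2)$ contains the required rectangle $(1)$ but not $(2,2)$, so no resultant splits off $S_2(\A-\B)$ and your reduction says nothing about it — yet the proposition asserts $S_2(\A-\B)\in\cP^1$. Your attempt to repair the cardinalities by letting ``$\B=\B_{m-i}$ play the role of the subtracted alphabet'' is not available: $\A$ and $\B$ are fixed, of cardinalities $m$ and $n$, since they encode the variables $c_1,\ldots,c_m$ and $c'_1,\ldots,c'_n$. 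Two further misidentifications: the resultant that factorization produces is $R(\A,\B)=S_{(n^m)}(\A-\B)=\cT^{\Sigma^m}$, the Thom--Porteous class of the locus $f^*=0$, not a class attached to the rank $\le m-i$ locus (it lies in $\cP^i$ only because $\cP^m\subset\cP^i$); and a general $S_I(\A-\B)$ with $I\supseteq(n-m+i)^i$ is \emph{not} ``a $\cP^i$-multiple'' of the base case $S_{(n-m+i)^i}$ — the ideal $\cP^i$ is not principal, and distinct Schur functions are not multiples of one another. So the only cases your argument actually covers are $I=(n-m+i)^i$ itself and the thin family $I\supseteq(n^m)$.

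The missing idea is geometric, and it is what the paper (following \cite[Proposition 3.2]{P}) uses: membership in $\cP^i$ is a support condition, and formal Schur-function algebra alone cannot certify it. One desingularizes the degeneracy locus $D$ (where $\rank(f^*\colon T^*N_M\to T^*M)\le m-i$) inside a product of Grassmann bundles, where the pulled-back cotangent map factors through tautological bundles, and then applies Gysin pushforward formulas for Schur functions: every $S_I(\A-\B)$ with $I\supseteq(n-m+i)^i$ is exhibited as the pushforward of a class living over the resolution, hence is supported on $D$ for almost any $f$. (Note that a naive restriction-to-the-complement argument also fails, because the rank of $f^*$ is not constant off $D$, so $T^*M-T^*N_M$ need not split there into a kernel and cokernel bundle to which the vanishing property (\ref{van}) could be applied.) Finally, your fallback — checking the statement against the $\Z$-basis of $\cP^i$ in \cite[Proposition 6.2]{P} — is circular as a proof: that basis theorem already contains Proposition \ref{contains}, and its proof is precisely the desingularization-and-pushforward argument above.
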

The idea of the proof is to use a desingularization of $D$ in the product of two Grassmann bundles, and apply appropriate
pushforward formulas. For details, see \cite[Proposition 3.2]{P}.

We are now ready to justify the theorem. Since $\Sigma$ is contained in $\Sigma^i$, the Thom polynomial $\cT^{\Sigma}$
belongs to $\cP^i$. By the stability assumption, the Thom polynomial ${\cal T}^{\Sigma}$ 
is a (unique) ${\bf Z}$-linear combination of the $S_I(\A-\B)$'s. Propositions \ref{contains} and \ref{no}
imply that only Schur functions indexed by partitions containing the rectangle $(n-m+i)^i$ appear in this sum.
\qed

\bigskip

In the computations of Thom polynomials, it is convenient to ``split'' them into pieces supported on
the consecutive degeneracy loci of the cotangent map (\ref{cot}). Let $\cT$ be the Thom polynomial of a singularity class.  
Following  \cite{PAi}, by the $h$-{\it part} of ${\cal T}$ we mean the sum of all Schur functions appearing 
in ${\cal T}$ (multiplied by their coefficients) such that the corresponding partitions satisfy 
the following condition: $I$ contains the rectangle partition $(n-m+h)^h$, but it does not contain the larger 
diagram $(n-m+h+1)^{h+1}$. The polynomial ${\cal T}$ is a sum of its $h$-parts, $h=1,2,\ldots$.

\section{Single R-L orbits}\label{single_orbits}

In the present paper, we shall mostly study Thom polynomials of singularities.

Let $k\geq 0$ be a fixed integer and $\bullet\in {\bf N}$. Two stable germs $\kappa_1, \kappa_2 :({\bf C}^{\bullet},0)\to ({\bf C}^{\bullet+k},0)$ are said to be right-left equivalent if there exist germs of biholomorphisms $\phi$ of $({\bf C}^\bullet,0)$ and  $\psi$ of $({\bf C}^{\bullet+k},0)$ such that $\psi \circ \kappa_1 \circ \phi^{-1} = \kappa_2.$  A suspension of a germ is its trivial unfolding: $(x,v)\mapsto (\kappa(x),v)$. Consider the equivalence relation (on stable germs $({\bf C}^{\bullet},0)\to ({\bf C}^{\bullet+k},0)$) generated by right-left equivalence and suspension. A \textit{singularity} $\eta$ is an equivalence class of this relation.\footnote{This terminology stems from \cite{Riminvens}; a singularity
corresponds to a single R-L orbit.}

According to Mather's classification (\cite{dPW} or \cite{AVGL}), singularities are in one-to-one
correspondence with finite dimensional (local) ${\bf C}$-algebras.
We shall use the following notation of Mather:

\medskip

-- $A_i$ \   will stand for the
stable germs with local algebra ${\bf C}[[x]]/(x^{i+1})$, $i\ge 0$;

\medskip

-- $I_{a,b}$ \ (of Thom-Boardman type $\Sigma^{2,0}$) for stable germs
with
local algebra ${\bf C}[[x,y]]/(xy, x^a+y^b)$, \ $b\ge a\ge 2$;

\medskip

-- $III_{a,b}$ \ (of Thom-Boardman type $\Sigma^{2,0}$) for stable germs
with local algebra ${\bf C}[[x,y]]/(xy, x^a, y^b)$, \ $b\ge a\ge 2$
(here $k\ge 1$).

\bigskip

With a singularity $\eta$, there is associated Thom polynomial $\cal T^{\eta}$ in the formal variables $c_1, c_2, \dots$ which
after the substitution of $c_i$ to
\begin{equation}
c_i(f^*TN-TM)=[c(f^*TN)/c(TM)]_i\,,
\end{equation}
for a general map $f:M\to N$ between complex analytic manifolds,
evaluates the Poincar\'e dual of $[\eta(f)]$, where $\eta(f)$
is the cycle carried by the closure of the set
\begin{equation}
\{x\in M : \hbox{the singularity of} \ f \ \hbox{at} \ x \
\hbox{is} \ \eta \}\,.
\end{equation}


By $\codim(\eta)$, we mean the codimension of $\eta(f)$ in $X$. 

Codimensions of above singularities are as follows (cf. \cite[Chapter 8]{dPW}): 

\smallskip

-- $A_i$ \ associated with maps $({\bf C}^{\bullet},0)\to ({\bf C}^{\bullet+k},0)$), where 
 $i\ge 0$ and $k\ge 0$ has codimension $(k+1) i$.

\medskip

-- $I_{a,b}$ \ associated with maps $({\bf C}^{\bullet},0)\to ({\bf C}^{\bullet+k},0)$), where 
\ $b\ge a\ge 2$ and $k\ge 0$ has codimension $(k+1)(a+b-1)+1$.
\medskip

-- $III_{a,b}$  \ associated with maps $({\bf C}^{\bullet},0)\to ({\bf C}^{\bullet+k},0)$), where 
\ $b\ge a\ge 2$ and $k\ge 1$ has codimension $(k+1)(a+b-2)+2$.

\medskip

We shall now follow the approach in \cite{Riminvens}.
Let $\kappa: ({\bf C}^n,0) \to ({\bf C}^{n+k},0)$
be a prototype of  a singularity $\eta$.
It is possible to choose a maximal compact subgroup $G_\eta$ of the
\textit{right-left symmetry group}
\begin{equation}
\Aut \kappa = \{(\phi,\psi) \in {\Aut}_n \times {\Aut}_{n+k}: \psi \circ \kappa \circ \phi^{-1} = \kappa \}\,,
\end{equation}
such that images of its projections
to the factors ${\Aut}_n$ and ${\Aut}_{n+k}$ are
linear \footnote{By ${\Aut}_n$ we mean here the space of automorphisms of $({\bf C}^n,0)$.}. 
That is, projecting on the source ${\bf C}^n$
and the target ${\bf C}^{n+k}$, we obtain representations
$\lambda_1(\eta)$ and
$\lambda_2(\eta)$. Let $E_{\eta}'$ and $E_{\eta}$ denote the vector
bundles associated
with the universal principal $G_\eta$-bundles $EG_\eta \to BG_\eta$
that  correspond to $\lambda_1(\eta)$ and
$\lambda_2(\eta)$, respectively. The {\it total Chern
class},  $c(\eta)\in H^{*}(BG_\eta,{\bf Z})$, and the {\it Euler
class}, $e(\eta)\in H^{2\codim(\eta)}(BG_\eta,{\bf Z})$,  of $\eta$
are defined by
\begin{equation}
c(\eta):=\frac{c(E_{\eta})}{c(E_{\eta}')}\qquad \mbox{and} \qquad
e(\eta):=e(E_{\eta}')\, .
\end{equation}

We end this section by recalling the {\it method of restriction equations} due to Rim\'anyi et al. 

\begin{theorem} (\cite{Riminvens})
Let $\eta$ be a singularity. 
Suppose  that the number of singularities  of codimension less than or equal to $\codim(\eta)$ is finite. 
Moreover, assume that the Euler classes of all singularities of codimension smaller than $\codim(\eta)$
are not zero-divisors. Then we have
\begin{enumerate}
\item  if \ $\xi\ne \eta$ \ and \ $\codim(\xi)\le \codim(\eta)$, then
\ ${\mathcal T}^{\eta}(c(\xi))=0$;
\item  ${\mathcal T}^{\eta}(c(\eta))=e(\eta)$.
\end{enumerate}

\noindent
This system of equations (taken for all such $\xi$'s) determines the Thom polynomial ${\mathcal T}^{\eta}$ in a unique way. 
\end{theorem}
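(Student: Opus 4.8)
Everything rests on one geometric identification, which I would establish first. For each singularity $\xi$, with minimal stable prototype $\kappa_\xi\colon ({\bf C}^n,0)\to({\bf C}^{n+k},0)$ where $n=\codim(\xi)$ (so that $E_\xi'$ has rank $\codim(\xi)$ and $e(\xi)=e(E_\xi')$ sits in degree $2\codim(\xi)$, as required), one builds over the classifying space $BG_\xi$ — approximated as usual by finite-dimensional smooth manifolds — the \emph{universal $\xi$-model}: the bundle map $E_\xi'\to E_\xi$ realizing $\kappa_\xi$ fibrewise together with the $G_\xi$-action. The claim I must prove is that, by the defining universality of Thom polynomials (Theorem \ref{Thompol}), for any singularity $\eta$ the class $\cT^{\eta}(c(\xi))$, with $c(\xi)=c(E_\xi)/c(E_\xi')$, equals the class in $H^{*}(BG_\xi,{\bf Z})$ of the closure of the locus where the $\xi$-model has singularity $\eta$. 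Making this rigorous requires that the $\xi$-model be transverse enough for the degeneracy loci to carry their expected fundamental classes, and this is exactly where the finiteness and non-zero-divisor hypotheses must be fed in. I expect this identification — not the formal manipulations that follow — to be the main obstacle.

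Granting the interpretation, parts (1) and (2) reduce to reading off strata of the $\xi$-model. At a point $(b,v)$ with $v$ in the fibre $({\bf C}^{n})_b$, the model has the singularity of $\kappa_\xi$ at $v$; since $\kappa_\xi$ carries the singularity $\xi$ at the origin with codimension $n=\codim(\xi)$ equal to the source dimension, that stratum is zero-dimensional, hence isolated, hence equal to $\{0\}$ for the prototype, so every $v\ne 0$ carries a singularity of codimension strictly less than $\codim(\xi)$. Thus the only stratum of the $\xi$-model of codimension $\ge\codim(\xi)$ is the zero section, of type $\xi$. For part (1), if $\xi\ne\eta$ and $\codim(\xi)\le\codim(\eta)$, then an $\eta$-point would have codimension $\codim(\eta)\ge\codim(\xi)$ and so would lie on the zero section, which is of type $\xi\ne\eta$; hence the $\eta$-locus is empty and $\cT^{\eta}(c(\xi))=0$. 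For part (2), with $\xi=\eta$ the $\eta$-locus is precisely the zero section of $E_\eta'$, entered transversally with normal bundle $E_\eta'$, so its dual class is $e(E_\eta')=e(\eta)$, giving $\cT^{\eta}(c(\eta))=e(\eta)$.

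For the uniqueness, two solutions $\cT,\cT'$ of (1)--(2) have difference $Q=\cT-\cT'$, homogeneous of degree $d=\codim(\eta)$ and restricting to $0$ on every $BG_\xi$ with $\codim(\xi)\le d$; I must show $Q=0$. The mechanism is a descent on codimension. Parts (1) and (2), applied now to the Thom polynomials $\cT^{\xi}$ of the smaller singularities — whose own hypotheses are inherited from those of $\eta$, since the relevant Euler classes and the relevant finite set of singularities are subsets of those for $\eta$ — show that restriction to the strata is \emph{triangular}: on $BG_{\xi}$ the class $\cT^{\xi}$ contributes $e(\xi)$, while every $\cT^{\xi'}$ with $\codim(\xi')\ge\codim(\xi)$ and $\xi'\ne\xi$ contributes $0$ by part (1). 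Running $Q$ through this triangular system in order of increasing codimension and dividing successively by the Euler classes — legitimate exactly because they are non-zero-divisors (and, at top codimension, because the coefficients are integers and the Euler class is nonzero) — forces the contribution of $Q$ along each stratum to vanish, the finiteness hypothesis ensuring that this descent terminates. The one point that needs genuine justification, and the more delicate half of the uniqueness, is that vanishing along all the strata $BG_\xi$ already forces $Q=0$, i.e. that these strata jointly detect degree-$d$ universal classes; this detection statement, resting on the stratified structure of the classifying space of singularities, is where the substance of the uniqueness assertion actually lies.
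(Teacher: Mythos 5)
The paper does not prove this theorem at all: it is recalled verbatim from Rim\'anyi's paper \cite{Riminvens} as the ``method of restriction equations'', and the text following the statement merely comments on how hard the equations are to solve. So the only meaningful comparison is with Rim\'anyi's own argument, whose architecture your outline does reproduce correctly: universal models over $BG_\xi$ built from $E'_\xi\to E_\xi$, the identification of $\cT^{\eta}(c(\xi))$ with the class of the $\eta$-locus of that model, the observation that in the prototype the only stratum of codimension $\ge\codim(\xi)$ is the zero section (giving (1) by emptiness and (2) by the normal-bundle/Euler-class computation), and a stratification-based detection statement for uniqueness.

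However, as a proof your text has two genuine gaps, both of which you yourself flag and then defer. First, the identification $\cT^{\eta}(c(\xi))=[\,\eta\hbox{-locus of the universal } \xi\hbox{-model}\,]$ is not established: to invoke Theorem \ref{Thompol} one must show that the fibrewise-$\kappa_\xi$ map over a finite-dimensional approximation of $BG_\xi$ is ``almost any map'', i.e.\ suitably generic/transverse to the singularity stratification. This is the content of Rim\'anyi's genericity argument for universal models, and it cannot be waved through; note also that you misplace the hypotheses here --- the finiteness and non-zero-divisor assumptions are \emph{not} what makes the identification work (transversality of the symmetric model is), they are consumed entirely by the uniqueness part. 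Second, the uniqueness reduces exactly to the detection statement you name at the end: a degree-$2\codim(\eta)$ universal class vanishing on all $BG_\xi$ with $\codim(\xi)\le\codim(\eta)$ is zero. That statement is the theorem's real substance; in \cite{Riminvens} it is proved by filtering the classifying space of singularities by the strata and showing that the non-zero-divisor hypothesis forces the Gysin sequences of the closed strata to break into short exact sequences, so that the joint restriction to the strata is injective in the relevant degrees (with the codimension-equal-to-$\codim(\eta)$ strata handled by the separate degree argument you allude to). Your intermediate ``triangular system'' maneuver does not fill this hole and is in fact a red herring: the difference $Q$ of two solutions is just a polynomial in Chern classes, not a priori a combination of Thom polynomials of smaller singularities, so there is nothing to run triangularly; once the detection statement is available, $Q=0$ follows immediately from the vanishing of all restrictions, with no descent needed.
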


\medskip

Solving of these equations is rather difficult. This method is well suited for compter experiments, 
though the bounds of such computations are quite sharp.

\section{Computing the Chern and Euler classes}\label{ch_eu}

The Chern and Euler classes recalled in the present section were given in: \cite{Riminvens}, \cite{PI22}, \cite{O1}, \cite{O2} and \cite{O3}.  

Let $\eta: ({\bf C}^{\bullet},0) \to ({\bf C}^{\bullet+k},0)$ be a singularity in the sense of Section \ref{single_orbits}.

For $\eta=A_i$, a suitable maximal compact subgroup can be chosen as $G_{A_i}=U(1)\times U(k)$. The Chern class is
\begin{equation}
c(A_i)=\frac{1+(i+1)x}{1+x}\prod_{j=1}^k(1+y_j)\,,
\end{equation}
where $x$ and $y_1$,\ldots, $y_k$ are the Chern roots of the universal bundles on $BU(1)$ and $BU(k)$.
The Euler class is
\begin{equation}\label{eAi}
e(A_i)= i! \ x^i \ \prod_{j=1}^k (y_j-ix)\cdots (y_j-2x)(y_j-x)\,.
\end{equation}

In case of $\eta=I_{2,2}$, we consider  the extension
of $U(1)\times U(1)$ by ${\bf Z}/2{\bf Z}$. Denoting this group by $H$, a maximal compact subgroup is
$G_\eta= H \times U(k)$ for all $k\ge 0$. But to make
computations easier, we use the subgroup $U(1)\times U(1) \times U(k)$ as $G_\eta$ (cf. \cite{Riminvens}, p.502)). 
We have
\begin{equation}
c(I_{2,2})=\frac{(1+2x_1)(1+2x_2)}{(1+x_1)(1+x_2)}\prod_{j=1}^k(1+y_j)\,.
\end{equation}
Here $x_1, x_2$ and $y_1,\ldots, y_k$ are the Chern roots
of the universal bundles on two copies of $BU(1)$ and on $BU(k)$. The Euler class is
\begin{equation}
e(I_{2,2})=x_1x_2(2x_1-x_2)(2x_2-x_1)\prod_{j=1}^k (y_j-x_1)(y_j-x_2)
(y_j-x_1-x_2)\,.
\end{equation}

Next, we consider $\eta=III_{2,2}$. This time we use the maximal compact group $G_\eta=U(2)\times U(k\moins 1)$ for $k\ge 1$. 
We have
\begin{equation}
c(III_{2,2})=\frac{(1\plus 2x_1)(1\plus 2x_2)(1\plus x_1\plus x_2)}{(1\plus x_1)(1\plus x_2)}
\prod_{j=1}^{k-1}(1+y_j)\,,
\end{equation}
where $x_1, x_2$ and $y_1,\ldots,y_{k-1}$ denote the Chern roots of the
universal bundles on $BU(2)$ and $BU(k\moins 1)$. The Euler class is
\begin{equation}
e(III_{2,2})=(x_1x_2)^2(x_1\moins 2x_2)(x_2\moins 2x_1) \
\prod_{j=1}^{k-1}(x_1\moins y_j) \ \prod_{j=1}^{k-1}(x_2\moins y_j)\,.
\end{equation}

For the singularity $III_{2,3}$, we can use the action of the $U(1)\times U(1) \times U(k-1)$. We have
\begin{equation}
c(III_{2,3})=\frac{(1\plus 2x_1)(1\plus 3x_2)(1\plus x_1\plus x_2)}{(1\plus x_1)(1\plus x_2)}\prod_{j=1}^{k-1}(1+y_j)\,.
\end{equation}
This time $x_1, x_2$ and $y_1,\ldots, y_k$ are the Chern roots
of the universal bundles on two copies of $BU(1)$ and on $BU(k-1)$.
The Euler class is
\begin{equation}
\begin{split}
e(III_{2,3})= &
4x_1^2x_2^3(x_1-x_2)(x_1-3x_2)(x_2-2x_1)\\ \times & \prod_{j=1}^{k-1}(x_1-y_j)(x_2-y_j)(2x_2-y_j)\,.
\end{split}
\end{equation}

For the singularity $III_{3,3}$, the maximal compact group is $U(2)\times U(k-1)$. The Chern class is
\begin{equation}
c(III_{3,3})=\frac{(1+3x_1)(1+3x_2)(1+x_1+x_2)}{(1+x_1)(1+x_2)}\prod_{j=1}^{k-1}(1+y_j)\, ,
\end{equation}
where $x_1,x_2$ and $y_1,\ldots,y_{k-1}$ are the Chern roots
of the universal bundles $BU(2)$ and $BU(k-1)$. The Euler class is
\begin{equation}
\begin{split}
e(III_{3,3})  = &  4x_1^3x_2^3(3x_1-x_2)(3x_1-2x_2)(3x_2-x_1)(3x_2-2x_1) \\
 \times & \prod_{j=1}^{k-1}(x_1-y_j)(2x_1-y_j)(x_2-y_j)(2x_2-y_j)\,.
\end{split}
\end{equation}

We display now the Chern or/and Euler classes of some other singularities (we omit to interpret the variables $x_i$ and $y_j$).
We have

\begin{equation}
c(I_{a,b})=\frac{(1+\frac{a+b}{\gcd(a,b)}x_1)(1+\frac{ab}{\gcd(a,b)}x_2)}{(1+\frac{a}{\gcd(a,b)}x_1)(1+\frac{b}{\gcd(a,b)}x_2)}\prod_{j=1}^{k-1}(1+y_j)\,;
\end{equation}

\begin{equation}
e(I_{a,b})=\frac{a!b!a^{b-1}b^{a-1}x^{a+b}}{\gcd(a,b)^{a+b}}\prod_{j=1}^k\left(\prod_{i=1}^a(i \frac{\gcd(a,b)}{b} x-y_j)\prod_{i=1}^{b-1}(i \frac{\gcd(a,b)}{a} x -y_j)\right) \,;
\end{equation}

\begin{equation}
c(III_{a,b})=\frac{(1+ax_1)(1+bx_2)(1+x_1+x_2)}{(1+x_1)(1+x_2)}\prod_{j=1}^{k-1}(1+y_j)\,;
\end{equation}

\begin{equation}
\begin{split}
e(III_{a,b}) &= (a-1)!(b-1)! \prod_{i=1}^{b-1}(ax_1-ix_2) \prod_{i=1}^{a-1}(bx_2-ix_1)\\
&\times \prod_{j=1}^{k-1}\left(\prod_{i=1}^{a-1}(y_j-ix_1)\prod_{i=1}^{b-1}(y_j-ix_2)\right)\,.
\end{split}
\end{equation}

A general strategy for computing the Chern and Euler classes of singularities was described in \cite{Riminvens}.

We show now, following \cite{O2}, how to compute the Euler class of $III_{2,3}$.
Assume that $k=1$ and consider the germ $g(x,y)=(x^2, y^3, xy)$. A prototype of $III_{2,3}$ can be written as the unfolding 
$$
g + \sum_{i=1}^8 u_i h_i\,,
$$ 
where $h_i$ form a basis of the space

$$\frac{\mathfrak{m}^3_{x,y}}{\mathfrak{m}_{x,y}\cdot \{\frac{\partial g}{\partial x}, \frac{\partial g}{\partial y}\} + {\bf C}^3 \cdot I(g)}\, ,$$ and where $I(g)$ is the subspace generated by the component functions of $g$. We shall work with the basis consisting of the following germs:
\begin{eqnarray*}
h_1(x,y)=(x,0,0), &  &  h_5(x,y)=(0,y,0), \\
h_2(x,y)=(y,0,0), &  &  h_6(x,y)=(0,y^2,0),\\
h_3(x,y)=(y^2,0,0),  &  & h_7(x,y)=(0,0,x),\\
h_4(x,y)=(0,x,0),  &  & h_8(x,y)=(0,0,y).\\
\end{eqnarray*}
Let $\rho_{h_i}$ denote the representation of the action of the group $U(1)\times U(1)$ on the space generated by $h_i$. Then, denoting the  one-dimensional representations of the first and the second copies of $U(1)$ by $\lambda$ and $\mu$, we have
\begin{eqnarray*}
\rho_{h_1}=\lambda, &  &  \rho_{h_5}=\mu^2, \\
\rho_{h_2}=\lambda^2 \otimes \mu^{-1}, &  &  \rho_{h_6}=\mu,\\
\rho_{h_3}=\lambda^2 \otimes \mu^{-2},  &  & \rho_{h_7}=\mu,\\
\rho_{h_4}=\lambda^{-1}\otimes \mu^3,  &  & \rho_{h_8}=\lambda.\\
\end{eqnarray*}
Therefore for $k=1$, using the representation $\bigoplus \rho_{h_i}$, we can write the Euler class as 
\begin{equation}
e(III_{2,3})=
4x_1^2x_2^3(x_1-x_2)(x_1-3x_2)(x_2-2x_1)\,,
\end{equation}
where $x_1$ and $x_2$ denote the Chern roots of the universal bundles on the two copies of $BU(1)$. 

For $k=2$, in addition to $h_i$ above, we need to consider the representations of the action of the group $U(k-1)=U(1)$ on the spaces generated by $(x,y)\mapsto(0,0,0,x)$, $(x,y)\mapsto(0,0,0,y)$ and $(x,y)\mapsto(0,0,0,y^2)$. These can be written as $\nu\otimes \lambda^{-1}$, $\nu\otimes \mu^{-1}$ and $\nu\otimes \mu^{-2}$, where $\nu$ denotes the  one-dimensional representation of this copy of $U(1)$. Hence, in this case, the Euler class can be written as   
\begin{equation}
e(III_{2,3})=
4x_1^2x_2^3(x_1-x_2)(x_1-3x_2)(x_2-2x_1)(x_1-y_1)(x_2-y_1)(2x_2-y_1)\,,
\end{equation}
where $x_i$ are as above and $y_1$ denotes the Chern root of the universal bundle on $BU(1)$.

For $k\geq 1$, we need to consider $U(k-1)$ instead of $U(1)$, giving rise to $y_1,\ldots,y_{k-1}$ (and respectively to the product
$\prod_{j=1}^{k-1}(x_1-y_j)(x_2-y_j)(2x_2-y_j)$) instead of $y_1$ (and respectively of $(x_1-y_1)(x_2-y_1)(2x_2-y_1)$).

\bigskip

We shall need the following alphabets:

\begin{definition} We set
\begin{align*}
\D &= \fbox{$x_1$}+\fbox{$x_2$}+\fbox{$x_1+x_2$}\,,\\
\E &= \fbox{$2x_1$}+\fbox{$2x_2$}\,,\\
\F &=  \fbox{$2x_1$}+\fbox{$3x_2$}+\fbox{$x_1+x_2$}\,,\\
\G &=  \fbox{$3x_1$}+\fbox{$3x_2$}+\fbox{$x_1+x_2$}\,,\\
\H &=  \fbox{$2x_1$}+\fbox{$4x_2$}+\fbox{$x_1+x_2$}\,.
\end{align*}
\end{definition}

\bigskip

\begin{notation} \rm
In the rest of the paper we shall use the shifted parameter
\begin{equation}
r:=k+1\,.
\end{equation}
When we need to emphasize the dependence on $r$ we shall write $\eta(r)$ for the
singularity $\eta :({\bf C}^{\bullet},0)\to ({\bf C}^{\bullet+r-1},0)$, and denote the 
Thom polynomial of $\eta(r)$ by $\mathcal{T}_r^{\eta}$, or $\cT_r$ for short.
(In this notation, the result of Thom, ${\cal T}_r^{A_1}=S_r$, has a transparent form.)
\end{notation}

\bigskip

We now specify, with the help of these alphabets, some equations characterizing Thom polynomials $\cT_r$ imposed
by different singularities. 

\begin{note} \rm The variables below will be specialized to the Chern roots of the {\it cotangent} bundles.
\end{note} 

First, we give the vanishing equations
coming from the Chern classes of singularities. Let $\B_j$ denote an alphabet
of cardinality $j$. We have the following equations:

\begin{equation}\label{air}
A_i(r): \ \ \ \ \ {\cT}_r(x\moins \B_{r-1}-\fbox{$(i+1)x$})=0 \ \ \ \ \ \ \ \hbox{for} \ \ i=0,1,2,\ldots \,;
\end{equation}

\begin{equation}\label{I22r}
I_{2,2}(r): \ \ \ \ \ {\mathcal T}_r \left(\X_2-\E-\B_{r-1}\right) = 0\,;
\end{equation}

\begin{equation}\label{I23r}
I_{2,3}(r): \ \ \ \ \ {\mathcal T}_r \left(\fbox{$2x$}+\fbox{$3x$}-\fbox{$5x$}-\fbox{$6x$}-\B_{r-1}\right) =0\,;
\end{equation}

\begin{equation}\label{III22r}
III_{2,2}(r): \ \ \ \ \ {\mathcal T}_r(\X_2-\D-\B_{r-2})=0\,;
\end{equation}

\begin{equation}\label{III23r}
III_{2,3}(r): \ \ \ \ \ {\mathcal T}_r \left(\X_2-\F-\B_{r-2}\right) = 0\,;
\end{equation}

\begin{equation}\label{III24r}
III_{2,4}(r): \ \ \ \ \ {\mathcal T}_r \left(\X_2-\H-\B_{r-2}\right) = 0\,.
\end{equation}
Using the Chern classes displayed above, one can write down other vanishing equations.

\bigskip

We give now some normalizing equations coming from the Euler classes of singularities. We have

\begin{equation}\label{nair}
A_i(r): \ \ \ \ \ {\cT}_r(x\moins \B_{r-1}\moins \fbox{$(i\plus 1)x$})=
R(x\plus \fbox{$2x$}\plus \fbox{$3x$}\plus
\cdots\plus\fbox{$ix$}\,, \B_{r-1}\plus \fbox{$(i\plus 1)x$}\, )\,;
\end{equation}

\begin{equation}\label{nI22r}
I_{2,2}(r): \ \ \ \ \ {\cT}_r(\X_2\moins \E \moins \B_{r-1})
=x_1x_2(x_1\moins 2x_2)(x_2\moins 2x_1)
\ R(\X_2\plus \fbox{$x_1\plus x_2$},\B_{r-1})\,;
\end{equation}

\begin{equation}\label{nI23r}
\begin{split}
I_{2,3}(r): \ \ \ \ \ 
&{\cT}_r \left(\fbox{$2x$}+\fbox{$3x$}-\fbox{$5x$}-\fbox{$6x$}-\B_{r-1}\right) \\
&= 2 x R(\fbox{$2x$}+\fbox{$3x$}\, ,\fbox{$5x$}+\fbox{$6x$}+\B_{r-1}) \\
& \times \prod_{j=1}^{r-1}(4x-b_j)(6x-b_j)\,;
\end{split}
\end{equation}

\begin{equation}\label{nIII22r}
III_{2,2}(r): \ \ \ \ \ {\cT}_r(\X_2-\D - \B_{r-2})
=R(\X_2, \D + \B_{r-2})\,;
\end{equation}

\begin{equation}\label{nIII23r}
III_{2,3}(r):
{\cT}_r \left(\X_2-\F-\B_{r-2}\right) = 2  x_2  (x_1-x_2) 
R(\X_2,\F+\B_{r-2})  \prod_{j=1}^{r-2}(2x_2 - b_j) \,;
\end{equation}

\begin{equation}\label{nIII33r}
\begin{split}
III_{3,3}(r):
 {\mathcal T}_r \left(\X_2-\G-\B_{r-2}\right)= & x_1 x_2 (3x_1-2x_2)(3x_2-2x_1)\\
  \times & R(\X_2,\G+\B_{r-2}) \prod_{j=1}^{r-2}(2x_1-b_j)(2x_2-b_j)\,.
\end{split}
\end{equation}

Using the Euler classes displayed above, one can write down other normalizing equations.

\section{Thom polynomials of singularities}\label{Schurexp}

In this section, we shall study, for singularities $\eta$, Schur function expansions Thom polynomials ${\cal T}^{\eta}$ written 
in the form (\ref{konwencja2}) (cf. also (\ref{konwencja1}):
$$
\mathcal{T}^{\eta}=\sum_I\alpha_I S_I\,.
$$ 
It is interesting to find bounds on partitions appearing in Schur function expansions of Thom
polynomials of singularities. One such follows immediately from Theorem \ref{rect}.

\begin{proposition}
Suppose that a singularity $\eta$ is of Thom-Boardman type $\Sigma^{i,\ldots}$. Then all summands
in the Schur function expansion of ${\cal T}^{\eta}_r$ are indexed
by partitions containing the rectangle partition $(r+i-1)^i$.
\end{proposition}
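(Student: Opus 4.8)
The plan is to reduce this proposition directly to Theorem \ref{rect}, which already handles exactly this kind of rectangle-containment statement for a stable singularity class contained in $\Sigma^i$. First I would recall the setup: in the notation of Section \ref{single_orbits}, a singularity $\eta$ is a single R-L orbit (up to suspension), and by Mather's classification it is of a definite Thom-Boardman type $\Sigma^{i,\ldots}$. The Thom-Boardman symbol $\Sigma^i$ records the corank of the differential, i.e.\ the dimension of the kernel of $f_*$ at the relevant points, so by definition the closure of $\eta(f)$ is contained in the locus $\Sigma^i(f)$ where $\dim\Ker(f_*)\ge i$. In the language of singularity classes this says precisely that $\eta$, viewed as a singularity class, is contained in $\Sigma^i$.

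Next I would match the dimensions. Theorem \ref{rect} is stated for maps $M^m\to N^n$ and produces the rectangle $(n-m+i)^i$. Here we are working in the world of singularities with the shifted parameter $r=k+1$ introduced in the Notation, where $\eta:({\bf C}^\bullet,0)\to({\bf C}^{\bullet+k},0)$, so that the relevant relative dimension is $n-m=k=r-1$. Substituting $n-m=r-1$ into the rectangle $(n-m+i)^i$ of Theorem \ref{rect} gives $(r-1+i)^i=(r+i-1)^i$, which is exactly the rectangle asserted in the proposition. So the content of the proposition is just the specialization of Theorem \ref{rect} to the singularity setting under the substitution $n-m\mapsto r-1$.

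The remaining point to check is the hypothesis of Theorem \ref{rect}, namely that $\eta$ is a \emph{stable} singularity class. Singularities in the sense of Section \ref{single_orbits} are defined via the equivalence relation generated by right-left equivalence and suspension (trivial unfolding), and stability under suspension is precisely the closed-under-contact-equivalence condition invoked in Section \ref{Thom} that guarantees $\cT^\eta$ depends only on $c_i(TM-TN_M)$ and hence admits a well-defined expansion in the $S_I(\A-\B)$. Thus the stability hypothesis is automatic for singularities, and Theorem \ref{rect} applies verbatim.

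I expect the only genuinely delicate step to be the identification of the Thom-Boardman corank $i$ with the index of the smallest $\Sigma^i$ containing $\eta$, i.e.\ verifying $\eta\subset\Sigma^i$ as singularity classes; everything after that is bookkeeping with the substitution $n-m=r-1$. This identification is really a matter of unwinding the definition of the Thom-Boardman symbol $\Sigma^{i,\ldots}$, whose first entry is by construction the corank, so $\overline{\eta(f)}\subset\Sigma^i(f)$ holds for almost any $f$; I would state this as the one line that invokes the definition of Thom-Boardman type and then immediately quote Theorem \ref{rect} to conclude.
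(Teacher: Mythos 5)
Your proposal is correct and is essentially the paper's own argument: the paper states this proposition as following immediately from Theorem \ref{rect}, and your proof is precisely the unwinding of that implication --- the containment $\eta\subset\Sigma^i$ coming from the first Thom--Boardman index, the substitution $n-m=k=r-1$ turning $(n-m+i)^i$ into $(r+i-1)^i$, and the observation that singularities in the sense of Section \ref{single_orbits} are stable singularity classes so that Theorem \ref{rect} applies. Your justification of stability via ``suspension invariance equals contact-equivalence closure'' is stated a bit loosely (the precise point is Mather's theorem that for stable germs right-left equivalence coincides with contact equivalence), but the conclusion is correct and this is exactly the bookkeeping the paper leaves implicit.
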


\bigskip

For example, consider the singularity $III_{2,3}(r)$.
As its Thom-Boardman type is $\Sigma^{2,0}$, all partitions in the Schur function expansion of 
$\cT^{III_{2,3}}(r)$ contain the partition $(r+1,r+1)$. This Thom polynomial is characterized by the equations: 
(\ref{air}), $i=0,1,2,3$, (\ref{III22r}) and (\ref{nIII23r}). Its Schur function expansion is given by the following expression: 

\begin{theorem} (\cite{O2}, \cite{FeRim}) We have 
\begin{equation}
{\mathcal T}_r^{III_{2,3}}=\sum_{i=1}^{r+1} 2^{i} S_{r+1-i , r+1 , r+i}\, .
\end{equation}
\end{theorem}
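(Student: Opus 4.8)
The plan is to determine the Thom polynomial $\mathcal{T}_r^{III_{2,3}}$ by the method of restriction equations, exploiting the two facts we already know about its shape. First, by the Proposition just stated, since $III_{2,3}$ is of Thom-Boardman type $\Sigma^{2,0}$, every partition $I$ appearing in the Schur expansion contains the rectangle $(r+1,r+1)$; hence $I$ has length at most $3$ (by the F\'eher--Rim\'anyi length bound, or by the codimension count), so we may write $I=(a,b,c)$ with $a\le b\le c$ and $b\ge r+1$. Second, the total codimension is $\codim(III_{2,3})=(k+1)(2+3-2)+2=3r+1$ (using $r=k+1$), so $a+b+c=3r+1$. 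The claimed answer $\sum_{i=1}^{r+1}2^i\,S_{r+1-i,\,r+1,\,r+i}$ has exactly this form: the middle part is always $r+1$, the first part $r+1-i$ decreases from $r$ down to $0$, and the last part $r+i$ increases from $r+2$ up to $2r+1$, with weights $2^i$.

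The key steps, in order, are as follows. First I would fix the ansatz $\mathcal{T}_r=\sum_{a,b,c}\alpha_{a,b,c}S_{a,b,c}$ subject to the two constraints above, reducing the unknowns to a one-parameter family indexed by $i$ with $c=r+i$, $a=r+1-i$, $b=r+1$. Next I would impose the vanishing equations (\ref{air}) for $i=0,1,2,3$ and (\ref{III22r}), each of which forces the Schur functions to be evaluated at a difference of alphabets of small cardinality; here the \emph{vanishing property} (\ref{van}) and the \emph{factorization property} (\ref{Fact}), together with the Transformation Lemma \ref{TL}, are the tools that collapse most terms and convert each restriction into a linear relation among the $\alpha$'s. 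In particular the substitution $\A=\fbox{$2x$}+\fbox{$3x$}+\fbox{$x_1+x_2$}$ coming from $c(III_{2,3})$ turns each $S_{a,b,c}$ into an explicit polynomial in $x_1,x_2$ via factorization, and the normalizing equation (\ref{nIII23r}) then pins down the overall scale. The cleanest route is to prove the coefficient recursion $\alpha_{i}=2\alpha_{i-1}$ directly: substituting the $A_i$ and $III_{2,2}$ vanishing relations and comparing the coefficient of the extremal monomial at each codimension level should yield precisely a doubling, with the normalization $\alpha_1=2$ fixed by (\ref{nIII23r}), giving $\alpha_i=2^i$.

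Concretely, I expect the verification to reduce to checking that the candidate $\sum_{i=1}^{r+1}2^i S_{r+1-i,r+1,r+i}$ satisfies statements (1) and (2) of the restriction-equation theorem of Section \ref{single_orbits}, i.e. vanishes on $c(\xi)$ for all lower singularities $\xi$ (here $A_0,A_1,A_2,A_3$ and $III_{2,2}$) and equals $e(III_{2,3})$ on $c(III_{2,3})$; since those equations determine $\mathcal{T}_r^{III_{2,3}}$ uniquely, this suffices. The main obstacle will be the normalizing equation (\ref{nIII23r}): evaluating $\sum_i 2^i S_{r+1-i,r+1,r+i}$ at the specialization dictated by $c(III_{2,3})$ and matching it against the product
$$
2x_2(x_1-x_2)\,R(\X_2,\F+\B_{r-2})\prod_{j=1}^{r-2}(2x_2-b_j)
$$
requires carefully applying factorization to each three-rowed Schur function and then summing a geometric-type series in the $2^i$; the telescoping that produces the resultant and the Euler-class factors is where the real bookkeeping lies, and it is the step most likely to hide sign or index errors. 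The vanishing equations, by contrast, should follow almost mechanically from (\ref{van}) once the alphabet cardinalities are counted.
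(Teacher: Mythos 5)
Your overall strategy --- show that the candidate $\sum_{i=1}^{r+1}2^iS_{r+1-i,r+1,r+i}$ satisfies a system of restriction equations that determines $\cT_r^{III_{2,3}}$ uniquely --- is exactly the route of the cited proof in \cite{O2}, so the approach is right in spirit; but as written there are two genuine gaps. First, an arithmetic error that makes the proposal internally inconsistent: $\codim\bigl(III_{2,3}(r)\bigr)=(k+1)(2+3-2)+2=3(k+1)+2=3r+2$, not $3r+1$ (the number $3r+1$ is the codimension of $I_{2,2}(r)$). Indeed $(r+1-i)+(r+1)+(r+i)=3r+2$, so your weight constraint $a+b+c=3r+1$ contradicts the very formula you are trying to prove (likewise, the largest parts $r+i$ start at $r+1$, not $r+2$). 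Second, and more seriously, the ``reduction of the unknowns to a one-parameter family'' with middle part exactly $r+1$ does \emph{not} follow from the a priori constraints (the rectangle containment of Theorem \ref{rect}, the length bound $\le 3$, and the correct weight $3r+2$). For example, $(r-2,\,r+2,\,r+2)$ satisfies all three constraints but is not of the form $(r+1-i,r+1,r+i)$; for $r=2$ this is $S_{44}$, which indeed does not occur in $\cT_2=2S_{233}+4S_{134}+8S_{35}$. That all such partitions get coefficient zero is part of the content of the theorem: it has to come out of solving the linear system (in practice it is extracted from Eq.~(\ref{III22r}) via the factorization property (\ref{Fact})), and cannot be built into the ansatz. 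If you pre-restrict the ansatz, then verifying the equations only shows your candidate is the unique solution \emph{within the ansatz space}, which proves nothing unless you separately know the true Thom polynomial lies there.

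There is also a defect in your appeal to uniqueness. Rim\'anyi's restriction-equation theorem (Section \ref{single_orbits}) requires vanishing at \emph{all} singularities $\xi\ne\eta$ with $\codim(\xi)\le\codim(\eta)$; your list $A_0,\ldots,A_3,III_{2,2}$ omits $I_{2,2}(r)$ (codimension $3r+1<3r+2$) and, when $r=2$, also $A_4$ (codimension $4r=3r+2$). Alternatively you may use the smaller system --- (\ref{air}) for $i\le 3$, (\ref{III22r}), and (\ref{nIII23r}) --- which the survey states characterizes $\cT_r^{III_{2,3}}$; but that this smaller system already determines the polynomial is itself a nontrivial claim established in \cite{O2}, not a formal consequence of Rim\'anyi's theorem, and your proposal does not address it. Once these points are repaired, your verification plan (the equations (\ref{air}) holding automatically by the vanishing property (\ref{van}), and the normalization (\ref{nIII23r}) checked via factorization and a geometric sum in the $2^i$) is sound and coincides with the published argument.
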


\subsection{On Morin singularities $A_i(r)$ }

One of the most important problems in global singularity theory is to write down the explicit Schur function expansion 
of the Thom polynomials for Morin singularities $A_i(r)$. We now describe, following \cite{PAi}, the $1$-part of ${\cal T}^{A_i}_r$
for any $i$ and $r$.

Let $\A$ be an alphabet of cardinality $m$. Consider the function $F(\A, -)$, defined for any difference of alphabets $\G-\H$ by
\begin{equation}
F(\A, \G-\H):= \sum_{I} S_I(\A) S_{n-i_m,\ldots,n-i_1,n+|I|}(\G-\H)\,,
\end{equation}
where the sum is over partitions $I=(i_1, i_2, \ldots, i_m)$ such that $i_m\leq n$. 

A basic link of this function to resultants is given by the following result.
\begin{lemma}\label{LFR} For a variable $x$ and an alphabet $\B$ of cardinality $n$, we have
\begin{equation}
F(\A,x-\B)= R(x+\A x,\B)\,.
\end{equation}
\end{lemma}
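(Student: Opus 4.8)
The plan is to isolate the single variable $x$ by the factorization property, reducing the claim to the resultant expansion of a rectangular Schur function, and then to recognize the remaining sum as a standard Cauchy-type expansion combined with rectangle complementation.

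First I would peel off the factor coming from $x$. Since $x+\A x=\{x\}\cup\A x$ as alphabets and the resultant is multiplicative in its first argument (immediate from Definition~\ref{res}), we have $R(x+\A x,\B)=R(\{x\},\B)\,R(\A x,\B)=\bigl(\prod_{b\in\B}(x-b)\bigr)R(\A x,\B)$. On the other side, each summand of $F(\A,x-\B)$ is indexed by the hook partition $(n-i_m,\dots,n-i_1,n+|I|)$ lying in the $(1,n)$-hook, so the factorization property (\ref{Fact}), applied with positive alphabet $\{x\}$ (hence the single-part partition $(|I|)$ on the positive side), yields
\[
S_{(n-i_m,\dots,n-i_1,n+|I|)}(x-\B)=S_{(|I|)}(\{x\})\,R(\{x\},\B)\,S_{(n-i_m,\dots,n-i_1)}(-\B)=x^{|I|}\Bigl(\prod_{b\in\B}(x-b)\Bigr)S_{(n-i_m,\dots,n-i_1)}(-\B).
\]
Using $S_I(\A)\,x^{|I|}=S_I(\A x)$ and summing over $I$, this gives $F(\A,x-\B)=\bigl(\prod_{b}(x-b)\bigr)\sum_I S_I(\A x)\,S_{(n-i_m,\dots,n-i_1)}(-\B)$. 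Comparing with the factored right-hand side, it remains to prove $\sum_I S_I(\A x)\,S_{(n-i_m,\dots,n-i_1)}(-\B)=R(\A x,\B)$, where $\A x$ has cardinality $m$ and the sum runs over partitions $I=(i_1,\dots,i_m)$ with $i_m\le n$.

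For this last identity I would start from $R(\A x,\B)=S_{(n^m)}(\A x-\B)$ (the case of (\ref{Fact}) with both partitions empty) and expand the rectangular Schur function by the standard addition formula for Schur functions in a difference of alphabets (see \cite{L}, \cite{M}): $S_{(n^m)}(\A x-\B)=\sum_{\mu\subseteq(n^m)}S_{(n^m)/\mu}(\A x)\,S_\mu(-\B)$. Because $\A x$ has exactly $m$ letters, each skew factor is controlled by the rectangle-complementation identity $S_{(n^m)/\mu}(\A x)=S_{\widehat\mu}(\A x)$, where $\widehat\mu$ is the complement of $\mu$ in the $m\times n$ box. Setting $\mu=\widehat I=(n-i_m,\dots,n-i_1)$ (complementation is an involution on the sub-partitions of $(n^m)$) turns $S_\mu(-\B)$ into $S_{(n-i_m,\dots,n-i_1)}(-\B)$ and $S_{\widehat\mu}(\A x)$ into $S_I(\A x)$, so the summation range of $F$ matches $\{\mu\subseteq(n^m)\}$ exactly and the two sums coincide. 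Assembling the three steps gives $F(\A,x-\B)=R(x+\A x,\B)$.

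The main obstacle is precisely this last step: invoking the addition formula in the difference-of-alphabets setting and keeping the complementation bookkeeping straight, i.e.\ matching the index $(n-i_m,\dots,n-i_1)$ of $F$ with $\mu$ under the $m\times n$ complement while respecting the increasing-order convention for partitions and the fact that $S_I(\A x)\neq0$ forces $\ell(I)\le m$. Everything else is a formal consequence of (\ref{Fact}); the two genuinely external inputs are the addition formula and the rectangle-complementation identity for ordinary (single-alphabet) skew Schur polynomials. As a sanity check, the case $m=1$ collapses directly: $F(\A,x-\B)=\sum_{i=0}^{n}a_1^{\,i}\,S_{(n-i,n+i)}(x-\B)$, and substituting $S_{(n-i,n+i)}(x-\B)=x^{i}\bigl(\prod_b(x-b)\bigr)(-1)^{n-i}\Lambda_{n-i}(\B)$ together with $\prod_{b}(a_1x-b)=\sum_i(-1)^{n-i}\Lambda_{n-i}(\B)\,(a_1x)^{i}$ reproduces $\prod_{b}(x-b)(a_1x-b)=R(x+\A x,\B)$.
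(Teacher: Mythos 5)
Your proof is correct. There is nothing in the paper to compare it against directly: the paper states Lemma \ref{LFR} and defers its proof entirely to \cite{PAi} (Lemma 8 there), giving no in-text argument. Your three-step route is the natural one and is sound: peeling off the letter $x$ via the factorization property (\ref{Fact}) (the partition $(n-i_m,\dots,n-i_1,n+|I|)$ has exactly the required shape, with $(|I|)$ to the right of the $1\times n$ rectangle and $(n-i_m,\dots,n-i_1)$, all parts $\le n$, below it), absorbing $x^{|I|}$ into $S_I(\A x)$ by homogeneity, and then identifying the residual sum $\sum_{I\subseteq (n^m)}S_I(\A x)\,S_{(n-i_m,\dots,n-i_1)}(-\B)$ with $S_{(n^m)}(\A x-\B)=R(\A x,\B)$ by the addition formula plus rectangle complementation. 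That last identity is precisely the classical Cauchy-type expansion of a resultant, $R(\A_m,\B_n)=\sum_{I\subseteq(n^m)}S_I(\A)\,S_{\widehat{I}}(-\B)$, which the cited source can invoke directly from \cite{L}; you re-derive it instead, which makes your argument self-contained at the modest cost of importing two standard facts (the addition formula and the $m\times n$ box complementation, both valid here since $\A x$ has exactly $m$ letters). Your bookkeeping with the increasing-order convention and the involutivity of complementation on sub-partitions of $(n^m)$ is handled correctly, and the $m=1$ sanity check is a genuine verification.
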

({\it loc.cit.} Lemma 8).

Next, we define the following function $F^{(i)}_r(-)$:

\begin{equation}
F^{(i)}_r(\G-\H)=\sum_J  S_J(\fbox{$2$}+\fbox{$3$}
+\cdots+\fbox{$i$}) S_{r-j_{i-1},\ldots,r-j_1,r+|J|}(\G-\H)\,,
\end{equation}
where the sum is over partitions $J\subset (r^{i-1})$,
and for $i=1$ we understand $F^{(1)}_r(-)=S_r(-)$.

The following result gives the key algebraic property of $F^{(i)}_r$.
\begin{proposition}\label{FBr} \ We have
\begin{equation}\label{Br}
F^{(i)}_r(x-\B_r)= R(x+\fbox{$2x$}+\fbox{$3x$}+\cdots
+\fbox{$ix$}\,, \B_r)\,.
\end{equation}
\end{proposition}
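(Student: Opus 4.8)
The plan is to deduce Proposition~\ref{FBr} from Lemma~\ref{LFR} by a suitable specialization. First I would set in Lemma~\ref{LFR} the alphabet $\A$ equal to $\fbox{$2$}+\fbox{$3$}+\cdots+\fbox{$i$}$, so that $\A$ has cardinality $m=i-1$, and the alphabet $\B$ equal to $\B_r$, so $n=r$. With these choices the summation variable $I$ in the definition of $F(\A,-)$ runs over partitions $I=(i_1,\dots,i_{i-1})$ with $i_{i-1}\le r$, which is exactly the index set $J\subset (r^{i-1})$ appearing in the definition of $F^{(i)}_r$. Thus $F(\A,\G-\H)$ with this specialization is literally $F^{(i)}_r(\G-\H)$, and Lemma~\ref{LFR} applied to $\G-\H=x-\B_r$ immediately yields
\[
F^{(i)}_r(x-\B_r)=F(\A,x-\B_r)=R(x+\A x,\B_r).
\]

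The remaining task is to identify the resultant $R(x+\A x,\B_r)$ with the right-hand side of Eq.(\ref{Br}). Here I would unwind the convention attached to the boxed variables: each box $\fbox{$j$}$ in $\A$ is specialized to the value $j$, so the letters of the alphabet $\A x$ are $2x,3x,\dots,ix$, and hence the letters of $x+\A x$ are $x,2x,3x,\dots,ix$, i.e. $x+\A x=x+\fbox{$2x$}+\fbox{$3x$}+\cdots+\fbox{$ix$}$. Since the resultant $R(\cdot,\cdot)$ defined in Eq.(\ref{res}) depends only on the letters of its two arguments, this gives
\[
R(x+\A x,\B_r)=R\bigl(x+\fbox{$2x$}+\fbox{$3x$}+\cdots+\fbox{$ix$},\,\B_r\bigr),
\]
which is precisely the claimed identity. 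The degenerate case $i=1$ is consistent: then $\A$ is the empty alphabet, the sum in $F^{(i)}_r$ collapses to the single term $S_r(-)$ as stipulated, and $x+\A x=x$, so the formula reads $F^{(1)}_r(x-\B_r)=S_r(x-\B_r)=R(x,\B_r)$, matching Lemma~\ref{LFR} with $m=0$.

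The only genuine point requiring care, and the step I expect to be the main obstacle, is checking that the specialization of $\A$ respects the combinatorial constraint $i_m\le n$ built into the definition of $F(\A,-)$, and more importantly that the index conventions match exactly: the definition of $F(\A,\G-\H)$ reverses the partition as $(n-i_m,\dots,n-i_1)$ and appends $n+|I|$, while $F^{(i)}_r$ uses $(r-j_{i-1},\dots,r-j_1,r+|J|)$. I would verify that under the identifications $m=i-1$, $n=r$, and $I=J$ these two index strings coincide term by term, so that the two Schur-function sums are the same series and not merely formally similar. Once this bookkeeping is confirmed, the proposition follows with no further computation, as the entire analytic content has already been absorbed into Lemma~\ref{LFR}.
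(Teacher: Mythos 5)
Your proof is correct and is essentially identical to the paper's: the paper also deduces Proposition~\ref{FBr} by applying Lemma~\ref{LFR} with $m=i-1$, $n=r$, and $\A=\fbox{$2$}+\fbox{$3$}+\cdots+\fbox{$i$}$. The index-matching and boxed-variable bookkeeping you carry out explicitly is exactly what the paper leaves implicit in its one-line proof.
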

\proof
The assertion follows from Lemma \ref{LFR} with $m=i-1$, $n=r$, and
$\A=\fbox{$2$}+\fbox{$3$}+\cdots +\fbox{$i$}.$
\smallskip

With the help of Proposition \ref{FBr}, the following result on Thom polynomials
was established:
\begin{theorem}
For any $i,r$, the $1$-part of ${\cal T}^{A_i}_r$ is equal to $F^{(i)}_r$.
\end{theorem}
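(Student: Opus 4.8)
The plan is to compute the single specialization $P(x,\B_r):=\cT_r^{A_i}(x\moins\B_r)$, in which the source alphabet is reduced to one variable $x$ and $\B_r$ is a generic alphabet of cardinality $r$, and to show that this one value already determines the $1$-part and equals $F^{(i)}_r(x\moins\B_r)$. The reason a single specialization suffices is the interplay of two facts from Section \ref{Schur}. On one hand, by the vanishing property (\ref{van}) every Schur function $S_I$ indexed by a partition \emph{not} lying in the $(1,r)$-hook — equivalently, every $I$ containing the rectangle $(r\plus 1,r\plus 1)$ — dies under this specialization; thus $P$ sees only the $h=1$ part of $\cT_r^{A_i}$ (there is no $h=0$ part, since $A_i$ is of Thom--Boardman type $\Sigma^1$, so by the rectangle containment every partition appearing already contains $(r)$). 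On the other hand, by Theorem \ref{Tss} the functions $S_I(x\moins\B_r)$ with $I$ in the $(1,r)$-hook are $\Z$-linearly independent, so the $1$-part is recovered \emph{faithfully} from $P$. Since every partition occurring in $F^{(i)}_r$ has second largest part $r\moins j_1\le r$ and hence lies in the $(1,r)$-hook, it will be enough to prove $P=F^{(i)}_r(x\moins\B_r)$.

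First I would use the vanishing equations. By the method of restriction equations (the theorem of Rim\'anyi et al.\ recalled in Section \ref{single_orbits}), $\cT_r^{A_i}$ satisfies the equations $A_j(r)$ of (\ref{air}) for $j=0,1,\dots,i\moins 1$, namely $\cT_r^{A_i}(x\moins\B_{r-1}\moins\fbox{$(j\plus 1)x$})=0$. Read in terms of $P$, and using that $P$ is symmetric in the letters of $\B_r$, these say exactly that $P$ vanishes whenever one letter $b\in\B_r$ is set equal to $px$ for some $p\in\{1,2,\dots,i\}$. Viewing $P$ as a polynomial in each letter separately, the product $\prod_{b\in\B_r}\prod_{p=1}^{i}(b-px)$ therefore divides $P$.

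Next comes the degree count. As $\codim(A_i)=ri$, the Thom polynomial $\cT_r^{A_i}$, and hence $P$, is homogeneous of degree $ri$ in $x$ and the letters of $\B_r$; but the divisor just produced is itself homogeneous of degree $ri$, so $P$ equals it up to a scalar. The scalar is fixed by the normalizing equation $A_i(r)$ of (\ref{nair}), $\cT_r^{A_i}(x\moins\B_{r-1}\moins\fbox{$(i\plus 1)x$})=R(x\plus\fbox{$2x$}\plus\cdots\plus\fbox{$ix$},\,\B_{r-1}\plus\fbox{$(i\plus 1)x$})$. Comparing with Proposition \ref{FBr}, which gives $F^{(i)}_r(x\moins\B_r)=R(x\plus\fbox{$2x$}\plus\cdots\plus\fbox{$ix$},\B_r)=\prod_{b\in\B_r}\prod_{p=1}^{i}(px-b)$, one sees that $F^{(i)}_r(x\moins\B_r)$ obeys the very same vanishing and the very same normalization; hence the scalar is forced and $P=F^{(i)}_r(x\moins\B_r)$. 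Combining this with the faithfulness supplied by Theorem \ref{Tss}, the $1$-part of $\cT_r^{A_i}$ coincides with $F^{(i)}_r$.

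The step I expect to be delicate is the very first one: justifying that the restriction equations attached to the Morin singularities $A_0,\dots,A_i$ alone control the $1$-part, so that the (generally numerous and harder) equations coming from corank $\ge 2$ singularities of codimension $\le ri$ can be set aside. This is precisely what the specialization to a single source variable buys: those other singularities could contribute to the $1$-part only through Schur functions lying outside the $(1,r)$-hook, and these are annihilated by (\ref{van}). One must nonetheless check carefully that no $(1,r)$-hook partition is lost under the specialization and that the divisibility argument genuinely accounts for the full degree $ri$ — this exhaustion of the degree is what makes the scalar, and therefore the entire $1$-part, unambiguous. \qed
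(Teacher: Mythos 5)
Your proof is correct and takes essentially the same approach as the paper's (i.e., the argument of \cite{PAi} that the survey cites alongside Proposition \ref{FBr}): specialize the source to a single letter so that the vanishing property (\ref{van}) kills every $h$-part with $h\ge 2$ while Theorem \ref{Tss} keeps the $1$-part faithful, then pin down $\cT^{A_i}_r(x\moins \B_r)$ from the $A_j(r)$ restriction equations by divisibility, the degree count $\codim A_i(r)=ri$, and the normalization (\ref{nair}), finally identifying the result with $F^{(i)}_r$ via Proposition \ref{FBr}. No gaps.
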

({\it loc.cit.} pp.173--174).


We shall now use a couple of functions $F^{(i)}_r$ to rephrase some results from \cite{T}, \cite{Ro}, folklore, \cite{G} and \cite{Riminvens},
respectively:
$$
\aligned
F^{(1)}_r&=S_r={\cal T}^{A_1}_r ;\\
F^{(2)}_r&=\sum_{j\le r} 2^j S_{r-j,r+j}={\cal T}^{A_2}_r ;\\
F^{(3)}_1&=S_{111}+5S_{12}+6S_3={\cal T}^{A_3}_1 ;\\
F^{(4)}_1&+10S_{22}=S_{1111}+9S_{112}+26S_{13}+24S_4+10S_{22}={\cal T}^{A_4}_1 ;\\
F^{(3)}_2&+5S_{33}=S_{222}+5S_{123}+6S_{114}+19S_{24}+30S_{15}+36S_6+5S_{33}={\cal T}^{A_3}_2
\endaligned
$$
(\cite{PAi}, pp.174--176). The reader can find in \cite{PAi} more examples of the functions $F^{(i)}_r$. In the next section,
we shall discuss the Schur function expansions of ${\cal T}^{A_3}_r$ for all $r$.

\smallskip

\begin{definition}\label{phi} For a positive integer $p$ We denote by $\Phi_p$ the linear endomorphism on the ${\bf Z}$-module
spanned by Schur functions indexed by partitions of length $\le p$
that sends a Schur function $S_{j_1,\ldots, j_p}$ to $S_{j_1+1,\ldots,j_p+1}$.
\end{definition}

\begin{example} \rm For any $i,r \ge 1$, we have
\begin{equation}\label{FF}
F^{(i)}_r=\overline{F^{(i)}_r}+{\Phi_i}(F^{(i)}_{r-1})\,,
\end{equation}
where the first summand gathers the Schur functions indexed by partitions of length $<i$. 
\end{example}

\smallskip

In \cite{B}, the author discusses another approach to Thom polynomials of Morin singularities.

\subsection{A basic recursion}

In the forthcoming section, we shall discuss some recursions for Thom polynomials. The following result was recently
obtained in \cite[Proposition 7.15, Theorem 7.14]{FeRim}. Let $Q_{\eta}$ denote the local algebra of the singularity $\eta$.

\begin{theorem} Let $\eta$ be a stable singularity. Then the length of any partition, appearing in the Schur function expansion
of $\cT^{\eta}_r$, is $\le \dim(Q_{\eta})\moins 1$. Moreover, by erasing one column of length $\dim(Q_{\eta})\moins 1$ from all the diagrams
of partitions appearing in $\cT^{\eta}_r$, we get all the diagrams of partitions appearing in ${\cal T}^{\eta}_{r-1}$ (we disregard the partitions whose diagrams have no such a column).
\end{theorem}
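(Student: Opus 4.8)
The statement has two parts: a length bound ($\ell(I)\le \dim(Q_\eta)-1$ for every partition $I$ appearing in $\cT^\eta_r$) and a \emph{stabilization} property relating the expansions for consecutive values of $r$ via erasing a full-length column. My strategy is to work entirely on the level of the restriction equations of Section \ref{single_orbits}, since these determine $\cT^\eta_r$ uniquely, and to interpret the operation ``erase one column of length $\dim(Q_\eta)-1$'' as the endomorphism $\Phi_{\dim(Q_\eta)-1}$ from Definition \ref{phi} (or rather its inverse/partial inverse). The key observation I would exploit is that passing from maps $(\C^\bullet,0)\to(\C^{\bullet+k},0)$ to $(\C^\bullet,0)\to(\C^{\bullet+k+1},0)$, i.e.\ from $r-1$ to $r$, corresponds on the level of Chern and Euler classes to adjoining one new universal target root $y$; this is visible in every formula of Section \ref{ch_eu}, where the $U(k)$ or $U(k-1)$ factor contributes a product $\prod_j(\cdots -y_j)$ whose number of factors is exactly $\dim(Q_\eta)-1$ (the dimension of the kernel/cokernel data of $\kappa$).

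\textbf{Step 1: the length bound.} First I would establish $\ell(I)\le \dim(Q_\eta)-1$. The cleanest route is via the vanishing property (\ref{van}): for the singularity $\eta$ with prototype $\kappa\colon(\C^n,0)\to(\C^{n+k},0)$, the relevant alphabets in the restriction equation $\cT^\eta_r(c(\eta))=e(\eta)$ have bounded cardinality — the source contributes an alphabet whose size is governed by $\dim(Q_\eta)-1$, while the target contributes the $(k+1)$-sized data. A Schur function $S_I(\A-\B)$ specialized at these universal roots vanishes whenever $I$ is not contained in the appropriate hook, and more to the point a nonzero contribution forces $\ell(I)$ to be at most the number of source Chern roots, which is $\dim(Q_\eta)-1$. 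Combined with the nonnegativity theorem of \cite{PW} (so that no cancellation can hide a long partition), this pins the length. I would phrase this as: the Thom polynomial lives in the image of the Giambelli-type map for a rank-$(\dim(Q_\eta)-1)$ bundle, so only partitions of length $\le\dim(Q_\eta)-1$ survive.

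\textbf{Step 2: the stabilization.} Here I would compare the restriction equations for $r$ and $r-1$. Write $p:=\dim(Q_\eta)-1$. I claim that $\Phi_p$ intertwines the two systems: if $\cT^\eta_{r-1}=\sum_J \beta_J S_J$ with $\ell(J)\le p$, then $\Phi_p(\cT^\eta_{r-1})$ contributes exactly the length-$p$ part of $\cT^\eta_r$, because adjoining the extra target root $y$ multiplies the Euler class $e(\eta)$ by precisely the factors indexed by $y$, and — through the factorization property (\ref{Fact}) — multiplication of a resultant by such a linear factor corresponds on Schur functions to adding a column, i.e.\ to the shift $S_{j_1,\dots,j_p}\mapsto S_{j_1+1,\dots,j_p+1}$. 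The equation (\ref{FF}) in the displayed \textbf{Example} is exactly the model case $\eta=A_i$, where $F^{(i)}_r=\overline{F^{(i)}_r}+\Phi_i(F^{(i)}_{r-1})$ and $\dim(Q_{A_i})-1=i$; I would present this as the prototype and argue that the same mechanism — the new $y$-root entering the Euler class and the factorization property converting it into the column-shift $\Phi_p$ — operates for any stable $\eta$. Erasing a length-$p$ column is then literally $\Phi_p^{-1}$ applied to the length-$p$ part, recovering $\cT^\eta_{r-1}$; partitions without such a full column (the summand $\overline{\cdot}$) are the new contributions at level $r$ and are correctly disregarded.

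\textbf{Main obstacle.} The hard part will be Step 2's claim that the \emph{entire} length-$p$ part of $\cT^\eta_r$, coefficients and all, is obtained from $\cT^\eta_{r-1}$ by the single shift $\Phi_p$, rather than merely that the indexing sets are compatible. Showing that the coefficients are genuinely preserved requires controlling the interaction between the extra target root and the source data in the full restriction system (not just the normalizing equation for $\eta$ itself, but all the vanishing equations (\ref{air})--(\ref{III24r}) for the lower singularities $\xi$), and verifying that these are compatible under $\Phi_p$ as well. I expect this to hinge on a uniform factorization: each lower Euler class $e(\xi)$ likewise acquires exactly the matching $y$-factors, so that the whole system at level $r$ is the $\Phi_p$-image of the system at level $r-1$ on its length-$p$ stratum. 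Making this precise in the generality of an arbitrary stable $\eta$ — as opposed to the worked cases $A_i$, $I_{2,2}$, $III_{2,3}$ of the surrounding sections — is where the real work lies, and is presumably where \cite{FeRim} supplies the representation-theoretic input via the local algebra $Q_\eta$.
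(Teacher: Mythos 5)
The first thing to note is that the paper itself does not prove this theorem: it is quoted from \cite[Proposition 7.15, Theorem 7.14]{FeRim}, and the paper only remarks that for the particular singularities $I_{2,2}(r)$, $A_3(r)$, $A_4(r)$, $III_{2,3}(r)$, $III_{3,3}(r)$ the recursion had been established earlier, case by case, from the restriction equations with the help of Eq.(\ref{bq}). Your plan is essentially an attempt to promote those case-by-case arguments to arbitrary stable $\eta$, and it has two genuine gaps. The first is in Step 1: your length-bound argument runs backwards. The vanishing property (\ref{van}) says that a Schur function indexed by a long partition evaluates to \emph{zero} at the small alphabets occurring in the restriction equations; consequently such terms are invisible to those equations, which is precisely why the equations cannot directly exclude them. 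Adding a positive multiple of a long $S_I$ to a putative solution does not disturb the normalization ${\mathcal T}^{\eta}_r(c(\eta))=e(\eta)$, because $S_I(c(\eta))=0$; so positivity does not repair the argument (to exclude long partitions through restriction equations one would need \emph{non}vanishing evaluations coming from singularities $\xi$ of large corank, combined with the uniqueness statement --- a different argument from the one you sketch). Worse, your identification of the bound is incorrect: the number of source Chern roots of the prototype is the embedding dimension of $Q_\eta$, not $\dim(Q_\eta)\moins 1$. For $A_3$ the prototype has a single source root ($G_{A_3}=U(1)\times U(k)$), while $\dim(Q_{A_3})\moins 1=3$; your Step 1, taken literally, would bound lengths by $1$, contradicting ${\cal T}^{A_3}_1=S_{111}+5S_{12}+6S_3$. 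The rank-$(\dim(Q_\eta)\moins 1)$ bundle you invoke does exist, but it arises from the geometric model of the contact class used in \cite{FeRim} (its fiber is essentially the maximal ideal of $Q_\eta$, of dimension $\dim(Q_\eta)\moins 1$), not from the source representation entering the restriction equations.

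The second gap is in Step 2. The intertwining of the two restriction systems by $\Phi_p$ is indeed the mechanism of the proofs in \cite{PI22}, \cite{PA3}, \cite{O1}, \cite{O2}, \cite{O3}, and your count is right: the new target root contributes exactly $\dim(Q_\eta)\moins 1$ new linear factors to $e(\eta)$, matching the growth of $\codim(\eta(r))$ by $\dim(Q_\eta)\moins 1$ per unit of $r$. But this whole method presupposes the hypotheses of Rim\'anyi's theorem --- finitely many singularities of codimension $\le\codim(\eta)$, all with non-zero-divisor Euler classes --- and these fail for a general stable $\eta$ (moduli of singularities appear in higher codimension), which is exactly why the general statement required the different techniques of \cite{FeRim}. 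Moreover, your Step 2 needs the length bound of Step 1 as input to even speak of the ``length-$p$ part,'' so the flaw above propagates. Your closing admission that the input of \cite{FeRim} is ``presumably'' what makes the argument work in general concedes the theorem rather than proving it: as written, the proposal could at best reprove the previously known special cases, not the statement for arbitrary stable $\eta$.
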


In other words, for $p=\dim(Q_{\eta})-1$, the following equation holds:
\begin{equation}\label{phit}
\cT^{\eta}_r=\overline{\cT}^{\eta}_r + {\Phi_p}(\cT^{\eta}_{r-1})\,,
\end{equation}
where the first summand gathers the Schur functions indexed by partitions of length $<p$.

This result was earlier established for the singularities $I_{2,2}(r)$, $A_3(r)$, $A_4(r)$, $III_{2,3}(r)$ and $III_{3,3}(r)$ from the restriction
equations which they obey, with help of Eq.(\ref{bq}) (see \cite{PI22}, \cite{PA3}, \cite{O1}, \cite{O2} and \cite{O3}).

This recurrence relation is quite easy to observe, especially by computing examples with the help of computer.
It is, however, not sufficient to compute Thom polynomials. As the matter of fact, Schur function expansions of Thom polynomials
often contain many terms, where the first column is shorter than the maximal possible. So these ``initial terms'', denoted
by $\overline{\cT}^{\eta}_r$ in (\ref{phit}), cannot be obtained by the operation of adding a maximal possible column. 

Another interesting question is to find upper bounds of the coefficients in Schur function expansions of Thom polynomials.
This will be a subject of some future study.

\section{Pascal staircases and two recursions}\label{recurrences}

We invoke first some results from \cite{PI22} and \cite{PA3}. We start with useful algebraic identity
associated with Pascal staircases (cf. \cite{PA3}). Then we discuss the Schur function computations of the Thom polynomials
of $I_{2,2}(r)$ and $A_3(r)$.

\subsection{Pascal staircases}\label{pascal}

The material of this subsection stems from \cite{PA3}.

Consider an infinite matrix $P=[p_{s,t}]$ with rows and columns numbered
by $s,t=1,2, \ldots$. 

We suppose that 
$p_{1,t}=p_{2,t}=0$ for $t\ge 2$, $p_{3,t}=p_{4,t}=0$
for $t\ge 3$, $p_{5,t}=p_{6,t}=0$ for $t\ge 4$ etc.

The first column is an arbitrary sequence $v=(v_1,v_2,\ldots)$. In the case when 
this sequence is the sequence of coefficients of the Taylor expansion of a function $f(z)$, 
we write $P_f$ for the corresponding matrix $P$.

To define the remaining $p_{s,t}$'s, we use the recursive formula
\begin{equation}
p_{s+1,t}= p_{s,t-1} + p_{s,t}.
\end{equation}
We visualize this definition by
$$ \begin{matrix}   a & b\\ & \square \end{matrix}   \qquad
\Rightarrow   \qquad \begin{matrix}   a & b\\ & a+b \end{matrix}
$$
We thus get the following {\it Pascal staircase} $P=[p_{i,j}]_{i,j=1,2,\ldots}$:
$$
\begin{array}{cccccc}
v_1 & 0 & 0 & 0 & 0 & \ldots \\
v_2 & 0 & 0 & 0 & 0 & \ldots \\
v_3 & v_2 & 0 & 0 & 0 & \ldots \\
v_4 & v_3\plus v_2 & 0 & 0 & 0 & \ldots \\
v_5 & v_4\plus v_3\plus v_2 & v_3\plus v_2 & 0 & 0 & \ldots \\
v_6 & v_5\plus v_4\plus v_3\plus v_2 & v_4\plus 2v_3\plus 2v_2 & 0 & 0 & \ldots \\
v_7 &  v_6\plus v_5\plus v_4\plus v_3\plus v_2 & v_5\plus 2v_4\plus 3v_3\plus 3v_2 & v_4\plus 2v_3\plus 2v_2& 0 & \ldots \\
\vdots & \vdots & \vdots & \vdots & \vdots &
\end{array}
$$

Given an integer $d \ge 0$, and an alphabet $\A$, we define the function
$W(d)=W(d,\A)$ by
\begin{equation}
W(d,\A)=\sum_{i,j} p_ {d+1-i,j+1}\, S_i(-\A)\, S_{j,d-i-j}(\X_2).
\end{equation}

The function $W(d,\A)$ is linear in the elements of the first 
column of $P$. Hence it is sufficient to restrict to the case 
$v=(1,y,y^2,\ldots)$, i.e., to take \ $P= P_{1/(1-zy)}$ to determine it.

\begin{lemma} If $P= P_{1/(1-zy)}$ and $\A= \fbox{$x_1+x_2$}$, then $W(0)=1$ 
and for $d\ge 1$
\begin{equation}\label{lw}
W(d,\fbox{$x_1+x_2$}) = (y-1)y^{d-1} S_d(\X_2).
\end{equation}
\end{lemma}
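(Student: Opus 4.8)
The plan is to evaluate $W(d,\fbox{$x_1+x_2$})$ directly from its definition by making the specializations $v=(1,y,y^2,\ldots)$ and $\A=\fbox{$x_1+x_2$}$ explicit, and then recognizing the resulting double sum as a closed expression. First I would record that with $\A=\fbox{$x_1+x_2$}$ a single variable specialized to $x_1+x_2$, we have $S_i(-\A)=0$ for $i\ge 2$, while $S_0(-\A)=1$ and $S_1(-\A)=-(x_1+x_2)$. This is the crucial simplification: the sum $\sum_i p_{d+1-i,j+1}S_i(-\A)S_{j,d-i-j}(\X_2)$ collapses to just the two terms $i=0$ and $i=1$. Thus
\begin{equation*}
W(d,\fbox{$x_1+x_2$})=\sum_j p_{d+1,j+1}S_{j,d-j}(\X_2)-(x_1+x_2)\sum_j p_{d,j+1}S_{j,d-1-j}(\X_2)\,.
\end{equation*}

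Next I would observe that $x_1+x_2=S_1(\X_2)$, and that in the second sum the factor $S_1(\X_2)\,S_{j,d-1-j}(\X_2)$ can be expanded by Pieri's rule into $S_{j+1,d-1-j}(\X_2)+S_{j,d-j}(\X_2)$ (dropping non-partition or vanishing terms as appropriate). After this expansion, both sums run over Schur functions $S_{a,d-a}(\X_2)$ in two variables, so I would reindex everything so that the coefficient of each $S_{a,d-a}(\X_2)$ is a combination of entries $p_{d+1,\bullet}$ and $p_{d,\bullet}$ of the Pascal staircase. The plan is then to invoke the defining recursion $p_{s+1,t}=p_{s,t-1}+p_{s,t}$ of the staircase so that the coefficients $p_{d+1,j+1}$ arising from the first sum cancel against the shifted contributions $p_{d,j}+p_{d,j+1}$ coming from the Pieri-expanded second sum. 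The expectation is that essentially all of the ``bulk'' terms $S_{a,d-a}(\X_2)$ cancel, leaving only a boundary contribution.

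For the concrete values of the staircase entries with $v=(1,y,y^2,\ldots)$, i.e.\ $P=P_{1/(1-zy)}$, I would compute the relevant $p_{s,t}$ explicitly; from the displayed staircase these are polynomials in $y$ whose generating structure I can read off column by column, and I expect the surviving boundary term to produce exactly the factor $(y-1)y^{d-1}$ multiplying the top Schur function $S_d(\X_2)=S_{(d)}(\X_2)$. The base case $W(0)=1$ should follow by direct inspection, since for $d=0$ only $p_{1,1}=v_1=1$ contributes and $S_0(\X_2)=1$.

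The main obstacle I anticipate is bookkeeping the telescoping carefully: tracking which reindexed terms cancel via the Pascal recursion and which survive at the boundary (the extreme values of the second index $j$, where $S_{j,d-j}$ degenerates to a one-row partition or vanishes). In particular I would need to be attentive to the endpoint where $d-j$ reaches $0$ or where $j$ forces a non-partition index, since it is precisely the failure of cancellation at one endpoint that should isolate the single term $(y-1)y^{d-1}S_d(\X_2)$. Verifying that no stray terms of the form $S_{a,d-a}(\X_2)$ with $1\le a\le d-1$ survive, and confirming the exact power of $y$ and the sign in the coefficient $(y-1)y^{d-1}$, is the delicate part; everything else is routine Pieri-rule and recursion manipulation.
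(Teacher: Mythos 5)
Your proposal is correct, but it cannot be matched against an in-paper argument because the paper gives none: the lemma is stated with the proof deferred entirely to \cite{PA3}. Your plan is sound and, carried out, is a complete self-contained proof. Concretely: since $\A=\fbox{$x_1+x_2$}$ is a single letter, only $i=0,1$ survive, giving $W(d)=\sum_j p_{d+1,j+1}S_{j,d-j}(\X_2)-S_1(\X_2)\sum_j p_{d,j+1}S_{j,d-1-j}(\X_2)$; Pieri in two variables gives $S_1S_{j,d-1-j}=S_{j,d-j}+S_{j+1,d-1-j}$; reindexing the second family by $j\mapsto j+1$ and collecting coefficients of $S_{j,d-j}$ yields
\begin{equation*}
W(d)=\bigl(p_{d+1,1}-p_{d,1}\bigr)S_d(\X_2)+\sum_{j\ge 1}\bigl(p_{d+1,j+1}-p_{d,j+1}-p_{d,j}\bigr)S_{j,d-j}(\X_2)\,,
\end{equation*}
where the brackets under the sum vanish by the staircase recursion and the boundary term is $(y^d-y^{d-1})S_d(\X_2)=(y-1)y^{d-1}S_d(\X_2)$, as claimed; note that only the first column $p_{s,1}=v_s=y^{s-1}$ is ever needed explicitly, not the column-by-column entries you mention. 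One endpoint you flagged deserves to be made explicit, since it is the only place the argument could break: the recursion $p_{s+1,t}=p_{s,t-1}+p_{s,t}$ does \emph{not} hold at the prescribed zeros of the staircase (for instance $p_{6,4}=0$ although $p_{5,3}+p_{5,4}=v_3+v_2\neq 0$), so for odd $d$ the bracket at $j=(d+1)/2$ is genuinely nonzero; the term is nevertheless harmless because the accompanying index $\bigl((d+1)/2,(d-1)/2\bigr)$ is not a partition and the corresponding Schur function vanishes by straightening ($S_{a+1,a}=0$). With that observation, and the trivial check $W(0)=p_{1,1}=1$, your telescoping closes with no stray terms.
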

For the proof, see \cite{PA3}.

Let $\B$ be another alphabet. Taking now $\A=\fbox{$x_1+x_2$}+\B$ instead of 
$\fbox{$x_1+x_2$}$, and using  
\begin{multline*} 
W(d,\A)=
\sum_{i,j,k} p_ {d+1-i-k,j+1}\, S_i\bigl(-\fbox{$x_1+x_2$}\bigr)\, 
S_{j,d-i-j-k}(\X_2) S_k(-\B) \\
= \sum_k W\left(d-k,\fbox{$x_1+x_2$}\right) S_k(-\B)  \\
= (1-y^{-1}) \sum_k y^{d-k} S_{d-k}(\X_2) S_k(-\B) =  y^t(1-y^{-1}) S_d( \X_2-y^{-1}\B)\,,  
\end{multline*}
we get the following corollary.

\begin{corollary}\label{pcorollary} If $P= P_{1/(1-zy)}$ and $\B$ is an arbitrary alphabet,
then $($apart from initial values$)$ we have
\begin{equation}
W(d,\fbox{$x_1+x_2$}+\B)=(y-1)y^{d-1} S_d(\X_2-y^{-1}\B)\,.
\end{equation}
\end{corollary}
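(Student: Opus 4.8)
The plan is to derive Corollary \ref{pcorollary} directly from the preceding Lemma by the same kind of multilinear expansion that the text already sketches in the displayed multi-line computation, but to present it as a clean linearity argument. First I would record the precise sense in which $W(d,\A)$ is built: it is linear in the entries $v_1,v_2,\ldots$ of the first column of the Pascal staircase $P$ (as noted just before the Lemma), so it suffices to verify the formula for $P=P_{1/(1-zy)}$, whose first column is the geometric sequence $(1,y,y^2,\ldots)$; this is exactly the hypothesis under which the Lemma computes $W(d,\fbox{$x_1+x_2$})$.

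The key step is to understand how enlarging the alphabet from $\fbox{$x_1+x_2$}$ to $\fbox{$x_1+x_2$}+\B$ interacts with the defining sum. The insertion of $\B$ introduces an extra index $k$ and an extra factor $S_k(-\B)$, and one checks from the definition of $W(d,\A)$ that replacing $\A$ by $\A+\B$ amounts to
$$
W(d,\fbox{$x_1+x_2$}+\B)=\sum_k W\bigl(d-k,\fbox{$x_1+x_2$}\bigr)\,S_k(-\B)\,,
$$
since $S_i\bigl(-\fbox{$x_1+x_2$}-\B\bigr)$ expands multiplicatively over the two alphabets and the complete-function factors separate the $\B$-dependence off into $S_k(-\B)$ while shifting the staircase index of the remaining part by $k$. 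This is precisely the first two lines of the computation displayed before the corollary, so I would present it as a direct unwinding of the generating-series definition in Definition \ref{cf}.

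Then I would substitute the Lemma's closed form $W\bigl(d-k,\fbox{$x_1+x_2$}\bigr)=(y-1)y^{d-k-1}S_{d-k}(\X_2)$ into the sum, pull out the common factor $(y-1)y^{d-1}$, and rewrite the remaining sum $\sum_k y^{-k}S_{d-k}(\X_2)S_k(-\B)$ using the linearity formula (\ref{bq}) together with the scaling behaviour of complete functions: absorbing the $y^{-k}$ into the alphabet $\B$ turns $\sum_k S_{d-k}(\X_2)S_k(-y^{-1}\B)$ into $S_d(\X_2-y^{-1}\B)$, by the very generating-series identity of Definition \ref{cf} applied to $\prod(1-y^{-1}b z)/\prod(1-x_iz)$. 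This yields $W(d,\fbox{$x_1+x_2$}+\B)=(y-1)y^{d-1}S_d(\X_2-y^{-1}\B)$, as claimed.

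The main obstacle is bookkeeping rather than conceptual: one must keep careful track of how the triple summation index $k$ shifts the staircase subscript, and confirm that the factorization $S_i(-\fbox{$x_1+x_2$}-\B)=\sum_{i'+k=i}S_{i'}(-\fbox{$x_1+x_2$})S_k(-\B)$ is compatible with the Pascal-recursion weights $p_{s,t}$ so that the reindexing producing $W(d-k,\cdot)$ is legitimate. In effect this is checking that the staircase construction is itself ``linear in $\B$'' in the required way; once that is granted the algebra is the routine generating-series manipulation indicated above, and the clause ``apart from initial values'' in the statement is exactly the caveat that the normalization $W(0)=1$ must be handled separately, as in the Lemma.
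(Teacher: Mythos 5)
Your proposal is correct and follows essentially the same route as the paper: expand $S_i\bigl(-\fbox{$x_1+x_2$}-\B\bigr)$ multiplicatively to get the convolution identity $W(d,\fbox{$x_1+x_2$}+\B)=\sum_k W\bigl(d-k,\fbox{$x_1+x_2$}\bigr)S_k(-\B)$, substitute the Lemma's closed form, and absorb $y^{-k}$ into the alphabet via the generating series of Definition \ref{cf} to produce $S_d(\X_2-y^{-1}\B)$. Your bookkeeping concern resolves exactly as you suspect (the index shift $d+1-i-k=(d-k)+1-i'$ is all that is needed), and your reading of ``apart from initial values'' matches the paper's treatment of the low-degree cases.
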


\subsection{Recursions for $I_{2,2}(r)$}

The material of this subsection stems from \cite{PI22}.

The codimension of $I_{2,2}(r)$, $r\ge 1$, is $3r+1$. Set $\cT_r:=\cT_r^{I_{2,2}}$ and $\overline{\cT}_r=\overline{\cT}_r^{I_{2,2}}$. We have $\cT_1=\overline{\cT}_1=S_{22}$.

A partition appearing in the Schur function expansion
of ${\cal T}_r$ contains the partition $(r+1,r+1)$ and has at most three parts.
In particular, if $S_{i_1,i_2}$ appears in the Schur function expansion
of ${\cal T}_r$, then $i_1=r+1+p$ and $i_2=2r-p$, where $0\le 2p\le r-1$.

Invoke the map $\Phi_3$ from Definition \ref{phi}. We have, for $r\ge 2$, the following recursive equation:

\begin{equation}\label{cr}
{\cal T}_r={\overline{\cal T}_r}+\Phi_3({\overline{\cal T}_{r-1}})
+\Phi_3^2({\overline{\cal T}_{r-2}})+\cdots+\Phi_3^{r-1}
({\overline{\cal T}_1)}\,.
\end{equation}

So we are left with computation of $\overline{\cal T}_r$.

Consider the matrix whose $(i,j)$th entry is the partition $(i+j,1+2i-j)$ with the convention that $(i+j,1+2i-j)$ 
is the empty partition for $2j>i+1$: 
$$
\left[\begin{array}{cccccc}
22 & \emptyset & \emptyset & \emptyset & \emptyset & \ldots \\
34 & \emptyset & \emptyset & \emptyset & \emptyset & \ldots \\
46 & 55 & \emptyset & \emptyset & \emptyset & \ldots \\
58 & 67 & \emptyset & \emptyset & \emptyset & \ldots \\
6,10 & 79 & 88 & \emptyset & \emptyset & \ldots \\
7,12 & 8,11 & 9,10 & \emptyset & \emptyset & \ldots \\
8,14 & 9,13 & 10,12 & 11,11 & \emptyset & \ldots \\
\vdots & \vdots & \vdots & \vdots & \vdots &
\end{array}\right]
$$

Note that the $r$th row of the above matrix consists of partitions appearing in ${\overline{\cal T}_r}$. It turns out that the coefficients
of their Schur functions are given by the corresponding entries of the Pascal staircase $P=[P_{i,j}]_{i=1,\ldots;j=1,\ldots}$, associated with the sequence $\{2^i-1\}_{i=1,2,\ldots}$:

\begin{equation}\label{pstair1}
P=\left[\begin{matrix}
1    & 0    & 0    & 0     & 0    & 0   & \ldots &   \\
3    & 0    & 0    & 0     & 0    & 0   & \ldots &   \\
7    & 3    & 0    & 0     & 0    & 0   & \ldots &   \\
15   & 10   & 0    & 0     & 0    & 0   & \ldots &   \\  
31   & 25   & 10   & 0     & 0    & 0   & \ldots &   \\
63   & 56   & 35   & 0     & 0    & 0   & \ldots &   \\
127  & 119  & 91   & 35    & 0    & 0   & \ldots &   \\
\vdots  & \vdots  &  \vdots & \vdots   &  \vdots & \vdots &  &  \\
\end{matrix}\right]
\end{equation}

Namely, we have
$$
{\overline{\cal T}_r}=\sum_{2j\leq r+1} P_{r,j} S_{r+j,2r+1-j}\,.
$$

\begin{example}
We have the following values of ${\overline{\cal T}_1}, \ldots, {\overline {\cal T}_7}$:
$$
\aligned
{\overline{\cal T}_1} &= S_{22}\\
{\overline{\cal T}_2} &= 3S_{34}\\
{\overline{\cal T}_3} &= 7S_{46}+3S_{55}\\
{\overline{\cal T}_4} &= 15S_{58}+10S_{67}\\
{\overline{\cal T}_5} &= 31S_{6,10}+25S_{79}+10S_{88}\\
{\overline{\cal T}_6} &= 63S_{7,12}+56S_{8,11}+35S_{9,10}\\
{\overline{\cal T}_7} &= 127S_{8,14}+119S_{9,13}+91S_{10,12}+35S_{11,11}\,.
\endaligned
$$
\end{example}

In this case, the algebra of Schur functions combined with one of the equations
characterizing the Thom polynomial, yields quickly an expression for ${\overline{\cal T}_r}$.
Of course, ${\overline{\cal T}_r}$ is uniquely determined by its value on
$\X_2$. The following result gives this value.

\begin{proposition}\label{Po}
For any $r\ge 1$, we have
\begin{equation}
{\overline{\cal T}_r}(\X_2)=(x_1x_2)^{r+1} \ S_{r-1}(\D)\,.
\end{equation}
\end{proposition}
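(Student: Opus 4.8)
The plan is to evaluate the Pascal-staircase formula $\overline{\cal T}_r=\sum_{2j\le r+1}P_{r,j}S_{r+j,2r+1-j}$ on $\X_2$ and recognize the answer through the function $W(d,\A)$ of Subsection \ref{pascal}. First I would record the elementary two-row identity $S_{a,b}(\X_2)=(x_1x_2)^aS_{b-a}(\X_2)$ for $a\le b$, immediate from the determinant (\ref{schur}) with $\X_2=\{x_1,x_2\}$. Applying it termwise gives
\begin{equation*}
\overline{\cal T}_r(\X_2)=\sum_{j\ge1}P_{r,j}(x_1x_2)^{r+j}S_{r+1-2j}(\X_2)=(x_1x_2)^{r+1}\sum_{j\ge1}P_{r,j}(x_1x_2)^{j-1}S_{r+1-2j}(\X_2).
\end{equation*}
In the definition $W(d,\A)=\sum_{i,l}p_{d+1-i,l+1}S_i(-\A)S_{l,d-i-l}(\X_2)$ I set $\A=\emptyset$, so only $i=0$ survives and $S_{l,d-l}(\X_2)=(x_1x_2)^lS_{d-2l}(\X_2)$; taking $d=r-1$ and $l=j-1$, and using the staircase $P$ of (\ref{pstair1}) attached to $\{2^i-1\}$, the inner sum above is exactly $W(r-1,\emptyset)$. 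Thus the proposition reduces to $W(r-1,\emptyset)=S_{r-1}(\D)$.

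To evaluate $W(d,\emptyset)$ I would use that $W$ is linear in the first column of $P$ and that $2^i-1=2\cdot2^{i-1}-1^{i-1}$ decomposes that column into the first columns of the two geometric staircases $P_{1/(1-zy)}$ with $y=2$ and $y=1$, weighted by $2$ and $-1$. For a single geometric staircase I would peel off one letter by the formal extension (to virtual alphabets) of the step in the proof of Corollary \ref{pcorollary}, $W(d,\A)=\sum_kW(d-k,\fbox{$x_1+x_2$})\,S_k(\fbox{$x_1+x_2$}-\A)$; at $\A=\emptyset$ this reads $W(d,\emptyset)=\sum_kW(d-k,\fbox{$x_1+x_2$})\,(x_1+x_2)^k$. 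Substituting the values $W(0,\fbox{$x_1+x_2$})=1$ and $W(m,\fbox{$x_1+x_2$})=(y-1)y^{m-1}S_m(\X_2)$ $(m\ge1)$ from the Lemma preceding Corollary \ref{pcorollary}, and writing $c:=x_1+x_2$, gives $W(d,\emptyset)|_y=c^d+(y-1)\sum_{m=1}^dy^{m-1}S_m(\X_2)c^{d-m}$. The $y=1$ term equals $c^d$, so combining the two pieces collapses the prefactors:
\begin{equation*}
W(d,\emptyset)=2\,W(d,\emptyset)|_{y=2}-W(d,\emptyset)|_{y=1}=\sum_{m=0}^d2^mS_m(\X_2)\,c^{d-m}.
\end{equation*}

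Finally I would pass to generating series. From $\sum_m2^mS_m(\X_2)z^m=[(1-2x_1z)(1-2x_2z)]^{-1}$ and $\sum_nc^nz^n=[1-(x_1+x_2)z]^{-1}$ the Cauchy product yields
\begin{equation*}
\sum_{d\ge0}W(d,\emptyset)\,z^d=\frac{1}{(1-2x_1z)(1-2x_2z)(1-(x_1+x_2)z)}=\sum_{d\ge0}S_d(\D)\,z^d,
\end{equation*}
the generating series of the complete functions of $\D=\fbox{$2x_1$}+\fbox{$2x_2$}+\fbox{$x_1+x_2$}$ (the Chern roots of the numerator of $c(III_{2,2})$). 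Comparing coefficients of $z^{r-1}$ gives $W(r-1,\emptyset)=S_{r-1}(\D)$, hence $\overline{\cal T}_r(\X_2)=(x_1x_2)^{r+1}S_{r-1}(\D)$.

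The delicate step is the second paragraph. One must keep straight the normalization $v_i=y^{i-1}$ of $P_{1/(1-zy)}$ -- this is what forces the weights $(2,-1)$ in the decomposition of $\{2^i-1\}$ rather than $(1,-1)$ -- and one must retain the initial value $W(0,\fbox{$x_1+x_2$})=1$, the very term excluded by the ``apart from initial values'' clause of Corollary \ref{pcorollary}; it supplies the $m=0$ summand $c^d$, without which the generating series would not close up to $\sum_dS_d(\D)z^d$. The remaining manipulations are routine symmetric-function bookkeeping.
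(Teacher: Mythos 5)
Your symmetric-function computation is correct, but as a proof of Proposition \ref{Po} it is circular in the paper's own logic. You take as input the expansion $\overline{\cal T}_r=\sum_{2j\le r+1}P_{r,j}S_{r+j,2r+1-j}$; but $\overline{\cal T}_r$ is \emph{defined} as the two-row part of the genuine Thom polynomial $\cT^{I_{2,2}}_r$ (cf. (\ref{phit}), (\ref{cr})), and that coefficient formula is precisely what Proposition \ref{Po} is designed to deliver: since a two-row expansion is determined by its value on $\X_2$ (as the paper notes just before the proposition), the staircase coefficients are \emph{read off} from the identity $\overline{\cal T}_r(\X_2)=(x_1x_2)^{r+1}S_{r-1}(\D)$, not the other way around; this is also the order of the argument in \cite{PI22}, from which the subsection stems. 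Your argument never invokes any restriction equation, so it cannot say anything about the actual Thom polynomial: what it establishes is the implication ``staircase coefficients $\Rightarrow$ stated value on $\X_2$,'' whereas the content of the proposition is ``equations characterizing $\cT^{I_{2,2}}_r$ $\Rightarrow$ stated value on $\X_2$.'' The missing ingredient is any use of the vanishing (\ref{III22r}) at $c(III_{2,2}(r))$.

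That is exactly what the paper's proof does: by induction on $r$, each three-row partition $I=(j,r+1+p,r+1+q)$ appearing in $\cT_r$ is treated through the factorization property (\ref{Fact}), $S_I(\X_2-\D-\B_{r-2})=R(\X_2,\D+\B_{r-2})\,S_j(-\D-\B_{r-2})\,S_{p,q}(\X_2)$; summing via (\ref{cr}), imposing (\ref{III22r}), dividing by the resultant and using $S_{r-1}(\D-\D-\B_{r-2})=S_{r-1}(-\B_{r-2})=0$ gives the induction step, with no prior knowledge of the coefficients. Two points in your write-up nevertheless deserve credit. First, your $W$-computation is correct, including the two delicate points you flag (the weights $(2,-1)$ forced by $v_i=y^{i-1}$, and retaining the initial value $W(0)=1$ excluded by the clause of Corollary \ref{pcorollary}); it proves the complementary fact that the staircase (\ref{pstair1}) built on $\{2^i-1\}$ records exactly the two-row Schur coefficients of $(x_1x_2)^{r+1}S_{r-1}(\D)$ --- this is the Segre-class remark closing the subsection, for which the paper only cites \cite{P}, \cite{LLT}, \cite{P223}. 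Second, you correctly read $\D$ as $\fbox{$2x_1$}+\fbox{$2x_2$}+\fbox{$x_1+x_2$}$: the definition printed in Section \ref{ch_eu} is a typo, as one sees both from (\ref{nIII22r}) (the resultant $R(\X_2,\D+\B_{r-2})$ would vanish identically if $x_1,x_2\in\D$) and from checking $\overline{\cal T}_2=3S_{34}$ against the proposition.
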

We show the induction step.
Suppose that the assertion is true for ${\overline{\cal T}_i}$,
where $i<r$.
Let $I=(j,r+1+p,r+1+q)$ be a partition 
appearing in the Schur function expansion of ${\cal T}_r$.
By the factorization property (\ref{Fact}), we get
$$
S_I(\X_2-\D-\B_{r-2})=R \cdot S_j(-\D-\B_{r-2}) \cdot S_{p,q}(\X_2)\,,
$$
where $R=R(\X_2, \D + \B_{r-2})$. Therefore, using Eq.~(\ref{cr}),
we obtain
\begin{equation}\label{sumj}
{\cal T}_r(\X_2-\D-\B_{r-2}) = R \cdot
\Bigl(\sum_{j=0}^{r-1} S_j(-\D-\B_{r-2})
\ \frac{{\overline{\cal T}_{r-j}}(\X_2)}{(x_1x_2)^{r-j+1}}\Bigr)\,.
\end{equation}
By the induction assumption, for positive $j\le r-1$, we have
$$
{\overline{\cal T}_{r-j}}(\X_2)=(x_1x_2)^{r-j+1} \ S_{r-1-j}(\D)\,.
$$
We use now the fact that among the equations characterizing $\cT_r$ is (\ref{III22r})
(because the codimension of $III_{2,2}(r)$ is smaller than $\codim(I_{2,2}(r))$).
Substituting this to (\ref{sumj}), we obtain
\begin{equation}\label{l1}
\sum_{j=1}^{r-1} S_j(-\D-\B_{r-2}) S_{r-1-j}(\D)
+\frac{{\overline{\cal T}_r}(\X_2)}{(x_1x_2)^{r+1}}=0\,.
\end{equation}
But we also have, by a formula for addition of alphabets,
\begin{equation}\label{l2}
\sum_{j=1}^{r-1} S_j(-\D-\B_{r-2}) S_{r-1-j}(\D)+S_{r-1}(\D)
=S_{r-1}(-\B_{r-2})=0\,.
\end{equation}
Combining (\ref{l1}) and (\ref{l2}), gives
$$
{\overline{\cal T}_r}(\X_2)=(x_1x_2)^{r+1} \ S_{r-1}(\D)\,,
$$
that is, the induction assertion.
\qed

\medskip

The Schur function expansion of $S_i(\D)$ was described in \cite{P},
\cite{LLT} and Appendix A3 in \cite{P223}, in the context of the
{\it Segre classes} of the second symmetric power of a rank $2$ vector
bundle. Indeed, $\D$ is the alphabet of the Chern roots of the second
symmetric power of a rank $2$ bundle with the Chern roots $x_1, x_2$.
The recursions encoded by the Pascal diagram (\ref{pstair1}) express
the recursions for the coefficients of the Segre classes of the second 
symmetric power of a rank 2 vector bundle ({\it loc.cit.}).

\subsection{Recursions for $A_3(r)$}

The material of this subsection stems from \cite{PA3}.

We set
\begin{equation}
F_r:=\sum_{j_1\le j_2 \le r} S_{j_1,j_2}(\fbox{$2$}+\fbox{$3$})
S_{r-j_2,r-j_1,r+j_1+j_2}\,.
\end{equation}
This function is the $1$-part of ${\cal T}^{A_3}_r$ (see Section \ref{Schurexp}).

In \cite{Riminvens}, the author gave Thom polynomials for $A_3(1)$ and $A_3(2)$.
Their Schur function expansions are
\begin{equation}
{\cal T}^{A_3}_1=S_{111}+5S_{12}+6S_3=F_1\,.
\end{equation}
and
\begin{equation}
{\cal T}^{A_3}_2 = S_{222}+5S_{123}+6S_{114}+19S_{24}+30S_{15}+36S_6+5S_{33}= F_2 + 5S_{33}\,.
\end{equation}
Note that the $2$-part of ${\cal T}^{A_3}_2$ is $5S_{33}$.

\smallskip

We now pass to the case of general $r$.
Since $A_3(r)$ has codimension $3r$, a partition appearing in the $2$-part of ${\cal T}^{A_3}_r$ has weight $3r$ and its diagram contains the partition $(r+1,r+1)$. Moreover, it can have at most three rows. 

Consider the matrix whose $(i,j)$th entry is the partition $(1+i+j,2+2i-j)$ with the convention that $(1+i+j,2+2i-j)$ is the empty partition for $2j>i+1$: 
$$
\left[\begin{array}{cccccc}
33 & \emptyset & \emptyset & \emptyset & \emptyset & \ldots \\
45 & \emptyset & \emptyset & \emptyset & \emptyset & \ldots \\
57 & 66 & \emptyset & \emptyset & \emptyset & \ldots \\
69 & 78 & \emptyset & \emptyset & \emptyset & \ldots \\
7,11 & 8,10 & 99 & \emptyset & \emptyset & \ldots \\
8,13 & 9,12 & 10,11 & \emptyset & \emptyset & \ldots \\
9,15 & 10,14 & 11,13 & 12,12 & \emptyset & \ldots \\
\vdots & \vdots & \vdots & \vdots & \vdots &
\end{array}\right]
$$
We now want to define a symmetric function $\overline H_r$ whose Schur summands are indexed by partitions from the $(r-1)$th row of
the above matrix. Their coefficients will be given by the corresponding entries of the following Pascal staircase.
Consider the following Taylor expansion:
\[
\aligned
f(z)=\frac{5-6z}{(1-z)(1-2z)(1-3z)}\\
=5+24z+&89z^2+300z^3+965z^4+3024z^5+9329z^6+\ldots\,.
\endaligned
\]
The Pascal staircase associated with $f$ is the following infinite matrix:
\[
P=\left[\begin{array}{cccccc}
5 & 0 & 0 & 0 & 0 & \ \ldots \\
24 & 0 & 0 & 0 & 0 & \ldots \\
89 & 24 & 0 & 0 & 0 & \ldots \\
300 & 113 & 0 & 0 & 0 & \ldots \\
965 & 413 & 113 & 0 & 0 & \ldots \\
3024 & 1378 & 526 & 0 & 0 & \ldots \\
9329 & 4402 & 1904 & 526 & 0 & \ldots \\
\vdots & \vdots & \vdots & \vdots & \vdots &
\end{array}
\right]
\]

For $r\ge 2$, we set
\begin{equation}
{\overline H_r}:=\sum_{2j\leq r} P_{r-1,j} S_{r+j, 2r-j}\,.
\end{equation}

\begin{example}
We have the following values of $\overline H_r$, $r=2,\ldots,7$~:
$$
\aligned
\overline H_2&=5S_{33}\\
\overline H_3&=24S_{45}\\
\overline H_4&=24S_{66}\plus89S_{57}\\
\overline H_5&=113S_{78}\plus300S_{69}\\
\overline H_6&=113S_{99}\plus413S_{8,10}\plus965S_{7,11}\\
\overline H_7&=526S_{10,11}\plus1378S_{9,12}\plus3024S_{8,13}\,.
\endaligned
$$
\end{example}

We define by induction on $r$
$$
H_r={\overline H_r}+\Phi_3({\overline H_{r-1}})+\Phi_3^2({\overline H_{r-2}})
+\cdots+\Phi_3^{r-2}({\overline H_2})\,.
$$
With this definition of $H_r$, we state the following result.

\begin{theorem}\label{a3}(\cite{PA3}) \ We have
$$
\cT^{A_3}_r= F_r + H_r\,.
$$
\end{theorem}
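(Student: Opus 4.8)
The plan is to split $\cT^{A_3}_r$ into its $h$-parts and treat them one at a time. Since $A_3$ is of Thom--Boardman type $\Sigma^1$ and $\dim(Q_{A_3})-1=3$, Theorem \ref{rect} and the length bound of the basic recursion theorem of Section \ref{Schurexp} show that every partition appearing in $\cT^{A_3}_r$ has length $\le 3$ and contains the rectangle $(r)$; as a partition of weight $3r$ cannot contain $(r+2)^3$, the only nonzero $h$-parts are the $1$-part and the $2$-part. The $1$-part is already identified with $F_r=F^{(3)}_r$ in Section \ref{Schurexp}, so it remains to show that the $2$-part equals $H_r$. Writing $T_r$ for this $2$-part, I would first apply the basic recursion (\ref{phit}) with $p=3$, which reads $T_r=\overline H_r+\Phi_3(T_{r-1})$, where $\overline H_r$ collects the summands indexed by partitions of length $<3$ (those from which no full column of length $3$ can be erased). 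Since $T_1=0$ (indeed $\cT^{A_3}_1=S_{111}+5S_{12}+6S_3=F_1$), iterating the recursion down to $r=2$ reproduces exactly the cascade $\overline H_r+\Phi_3(\overline H_{r-1})+\cdots+\Phi_3^{r-2}(\overline H_2)$ that defines $H_r$. Hence the theorem reduces to computing the length-$2$ initial terms $\overline H_r$ and checking that their coefficients are the Pascal staircase entries $P_{r-1,j}$ of $f$.

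To compute $\overline H_r$ I would imitate the argument behind Proposition \ref{Po}. Among the equations characterizing $\cT^{A_3}_r$ is the $\Sigma^2$ vanishing equation (\ref{III22r}) for $III_{2,2}$, legitimate because $\codim III_{2,2}=2r+2\le 3r=\codim A_3$ for $r\ge 2$; thus $\cT^{A_3}_r(\X_2-\D-\B_{r-2})=0$. Evaluating $F_r+T_r$ on this alphabet, every summand of $T_r$ has a diagram containing $(r+1,r+1)$, so by the factorization property (\ref{Fact}) it splits off the common resultant $R(\X_2,\D+\B_{r-2})$ and a shorter Schur function in $-\D-\B_{r-2}$ times one in $\X_2$; this is precisely the mechanism of Eqs.(\ref{sumj})--(\ref{l2}). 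The $1$-part $F_r$, evaluated from its explicit formula, supplies an inhomogeneous source term that the analogous $I_{2,2}$ computation did not have. Dividing out the resultant and combining with the recursion for $T_r$ produces a closed recursion for $\overline H_r(\X_2)$ in terms of the lower $\overline H_{r-j}(\X_2)$ and the contribution of $F_r$. Because restriction to $\X_2$ annihilates every length-$3$ Schur function and the $S_{a,b}(\X_2)$ are linearly independent, this determines $\overline H_r$ uniquely.

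The final step is to recognize this recursion as the Pascal staircase generated by $f(z)=\dfrac{5-6z}{(1-z)(1-2z)(1-3z)}$. The denominator is $\prod_{a\in\{1,2,3\}}(1-az)$, reflecting the source Chern roots $\{x,2x,3x\}$ of $A_3$ which enter the resultant of the normalizing equation (\ref{nair}); the numerator $5-6z$ records the inhomogeneous source coming from $F_r$, whose two lowest coefficients are already visible in $\cT^{A_3}_1=S_{111}+5S_{12}+6S_3$. I would make this explicit through the $W(d,\A)$ formalism of Subsection \ref{pascal}: by the linearity of $W(d,\A)$ in the first column of the staircase and Corollary \ref{pcorollary}, the staircase attached to each simple fraction $\tfrac{1}{1-az}$ ($a\in\{1,2,3\}$) converts into an explicit Schur function $S_d(\X_2-a^{-1}\B)$, and summing these with the weights prescribed by $5-6z$ yields exactly the coefficients $P_{r-1,j}$. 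Together with the $\Phi_3$-cascade, this gives $\cT^{A_3}_r=F_r+H_r$.

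The step I expect to be hardest is the last one: proving that the numerator is precisely $5-6z$ and that the associated recursion coincides with the Pascal staircase, rather than merely having the correct denominator. This requires the exact evaluation of $F_r(\X_2-\D-\B_{r-2})$ and careful control of the low-order terms, since an error there propagates through the entire $\Phi_3$-cascade; I would cross-check the outcome against the tabulated values $\overline H_2,\dots,\overline H_7$.
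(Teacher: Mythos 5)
Your architecture is essentially the paper's: both arguments hinge on the vanishing (\ref{III22r}) at $c(III_{2,2}(r))$ (legitimate since $\codim III_{2,2}(r)=2r+2\le 3r=\codim A_3(r)$ for $r\ge 2$), on the factorization property (\ref{Fact}) to split off the resultant $R(\X_2,\D+\B_{r-2})$, on the $\Phi_3$-cascade, and on the $W(d,\A)$/Pascal-staircase machinery of Subsection \ref{pascal}, where by linearity in the first column and partial fractions only the pole $y=3$ of $f$ survives. Your repackaging --- deriving the $2$-part from the known $1$-part $F_r$, the recursion (\ref{phit}), and the $III_{2,2}$ vanishing, instead of verifying the candidate $F_r+H_r$ against the characterizing equations and invoking uniqueness --- is legitimate and differs only in bookkeeping: after the reductions of Lemmas \ref{Lvr} and \ref{Lur} to the specialization $\B_{r-2}=0$, your ``closed recursion for $\overline H_r(\X_2)$'' is precisely the identity $V_r(\X_2;0)=-U_r(\X_2;0)$ that the paper establishes.

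There is, however, a genuine gap, the very one you flag but leave open: nothing in your proposal evaluates the source term $F_r(\X_2-\D-\B_{r-2})$, equivalently $U_r(\X_2;0)=F_r(\X_2-\D)/R(\X_2,\D)$, in closed form. The heuristics you offer (the denominator of $f$ ``reflecting'' the roots $x,2x,3x$; the numerator $5-6z$ being ``visible'' in $\cT^{A_3}_1$) prove nothing, and cross-checking against the tabulated $\overline H_2,\dots,\overline H_7$ verifies finitely many cases of a statement quantified over all $r$. This evaluation is exactly the heart of the paper's proof: Lemma \ref{div} (divisibility by the resultant) and Proposition \ref{gr}, which is proved by computing the image under the operator $\pi$ of the multi-Schur function $S_{r,r;r}(\X_2+\fbox{$2x_1$}+\fbox{$3x_1$}-\D;x_1-\D)$ in two ways: direct expansion gives $F_r(\X_2-\D)$, while the Transformation Lemma \ref{TL} (subtracting $x_1$ in the first two rows) collapses the determinant to $S_{r,r}(x_2+\fbox{$2x_1$}+\fbox{$3x_1$}-\D)\cdot S_r(x_1-\D)$, whence the closed form $F_r(\X_2-\D)=-3^{r-2}R(\X_2,\D)(x_1x_2)^{r-2}\bigl(3S_{r-2}(\X_2)-2S_{1,r-3}(\X_2)\bigr)$ and Corollary \ref{Pu}. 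Without this computation, or a substitute for it, your plan cannot identify the staircase numerator, and the proof of Theorem \ref{a3} remains incomplete.
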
 
In other words, the function $H_r$ is the $2$-part of ${\cal T}^{A_3}_r$, and its $h$-parts are zero for $h\ge 3$. Note also that
we recover the recurrence (\ref{FF}):
$$
F_r={\overline F_r}+\Phi_{3}(F_{r-1})\,. 
$$

\bigskip

We show now, following \cite{PA3}, the essential computations in the proof of Theorem \ref{a3}.
As explained in \cite{PA3}, it is crucial to show the vanishing (\ref{III22r}) of ${\cal T}^{A_3}_r$ 
at the Chern class $c(III_{2,2}(r))$. I.e., it suffices to show the equality
\begin{equation}\label{frhr}
(F_r+H_r)(x_1+x_2-\D-\B_{r-2})=0\,.
\end{equation}

Due to the factorization property (\ref{Fact}), each Schur function occuring in the expansion of $H_r$ 
is such that
\begin{equation*}
S_{c,r+1+a,r+1+b}(\X_2\moins \D\moins \B_{r-2})
= R(\X_2,\D\plus \B_{r-2})\cdot S_c(\moins \D\moins \B_{r-2})\cdot S_{a,b}(\X_2)\,.
\end{equation*}
We set
\begin{equation}
V_r(\X_2;\B_{r-2}):= \frac{H_r(\X_2-\D-\B_{r-2})}{R(\X_2,\D+\B_{r-2})}\,,
\end{equation}
so that
\begin{equation}\label{v}
V_r(\X_2;\B_{r-2})=\sum_{i=0}^{r-2} \sum_{\{j\ge 0: \ i+2j \le r-2\}}
e_{r-i,j} \ S_i(-\D-\B_{r-2}) \ S_{j,r-i-j-2}(\X_2)\,.
\end{equation}
We have the following
recursive relation which follows from the observation that the coefficient
of $b_{r-2}$ in $V_r(\X_2;\B_{r-2})$ is equal to
$-V_{r-1}(\X_2;\B_{r-3})$.

\begin{lemma}\label{Lvr} For $r\ge 2$, we have
\begin{equation}
V_r(\X_2; \B_{r-2})= \sum_{i=0}^{r-2} \ V_{r-i}(\X_2; 0)
\ S_i(-\B_{r-2})\,.
\end{equation}
\end{lemma}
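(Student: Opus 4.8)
The plan is to extract the stated recursion from the single structural fact announced just before the lemma: that the coefficient of $b_{r-2}$ in $V_r(\X_2;\B_{r-2})$ equals $-V_{r-1}(\X_2;\B_{r-3})$. I read the claim as an assertion that $V_r(\X_2;\B_{r-2})$, viewed as a function of the alphabet $\B_{r-2}$, is built up from the ``boundary'' data $V_{r-i}(\X_2;0)$ by the standard addition-of-alphabets machinery for the complete functions $S_i(-\B_{r-2})$. First I would set up the dependence on $\B$ carefully. The alphabet $\B_{r-2}=\{b_1,\dots,b_{r-2}\}$ enters $V_r$ only through the factors $S_i(-\D-\B_{r-2})$ in the expansion (\ref{v}), so $V_r(\X_2;\B_{r-2})$ is a symmetric function in $\B_{r-2}$, and the claim of the lemma is precisely its expansion in the basis $\{S_i(-\B_{r-2})\}_{i\ge 0}$ with $S_{r-i}$-type coefficients read off as $V_{r-i}(\X_2;0)$.

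The core step is an induction on $r$ driven by the linearity formula (\ref{bq}). First I would rewrite the asserted right-hand side using the addition rule for complete functions, namely $S_i(-\D-\B_{r-2})=\sum_{\ell} S_{i-\ell}(-\B_{r-2})S_\ell(-\D)$, to compare coefficients of each $S_i(-\B_{r-2})$ on both sides. Equivalently, I would use (\ref{bq}) in the form $S_j(-\E-\B_{n})=S_j(-\E-\B_{n-1})-b_n S_{j-1}(-\E-\B_{n-1})$ to isolate the $b_{r-2}$-linear part of $V_r(\X_2;\B_{r-2})$. The announced fact tells us that differentiating (or extracting the linear coefficient) with respect to $b_{r-2}$ lowers the index by one and produces $-V_{r-1}(\X_2;\B_{r-3})$; iterating this, or equivalently specializing $\B_{r-2}\to 0$ one variable at a time, reconstructs the full sum $\sum_{i=0}^{r-2} V_{r-i}(\X_2;0)\,S_i(-\B_{r-2})$. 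Concretely, with the induction hypothesis applied to $V_{r-1}(\X_2;\B_{r-3})$, one checks that the proposed formula has the correct $b_{r-2}$-coefficient and the correct specialization at $b_{r-2}=0$ (where $S_i(-\B_{r-2})$ collapses to $S_i(-\B_{r-3})$), and since a symmetric function in $\B_{r-2}$ of bounded degree is determined by these data, the two sides agree.

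I would establish the base case $r=2$ directly: there $\B_0=0$, the sum degenerates to the single term $V_2(\X_2;0)S_0(-\B_0)=V_2(\X_2;0)$, which is tautologically correct. The main obstacle is purely bookkeeping rather than conceptual: one must verify that the degree constraints in the double sum (\ref{v}), namely $i+2j\le r-2$, interact correctly with the index shift $r\mapsto r-1$ so that extracting the $b_{r-2}$-coefficient genuinely reproduces the expansion (\ref{v}) for $V_{r-1}$ with its own constraint $i+2j\le r-3$, with no boundary terms lost or doubled. I expect this to reduce to matching the coefficients $e_{r-i,j}$ across consecutive values of $r$, which is exactly the kind of bound-shifting that (\ref{bq}) is designed to handle. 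Once the coefficient-of-$b_{r-2}$ identity is confirmed to be compatible with these ranges, the induction closes and the lemma follows.
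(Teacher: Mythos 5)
Your core idea --- isolate the $b_{r-2}$-linear part of both sides via the linearity formula (\ref{bq}) and feed in the announced fact that the coefficient of $b_{r-2}$ in $V_r(\X_2;\B_{r-2})$ is $-V_{r-1}(\X_2;\B_{r-3})$ --- is exactly what the paper intends (the paper offers nothing beyond that one observation). But the induction you actually set up does not close. A small imprecision first: ``a symmetric function in $\B_{r-2}$ of bounded degree is determined by these data'' is false in general; what you need, and what does hold here, is that both sides have degree $\le 1$ in $b_{r-2}$, since each $b$ enters $V_r$ only through $S_i(-\D-\B_{r-2})$ (an elementary symmetric function up to sign) and enters the right-hand side only through $S_i(-\B_{r-2})$. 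The genuine gap is in the second of your two checks. Setting $b_{r-2}=0$ turns the left-hand side into $V_r(\X_2;\B_{r-3})$, that is, $V_r$ evaluated on an alphabet of $r-3$ letters, and the right-hand side into $\sum_{i=0}^{r-3}V_{r-i}(\X_2;0)\,S_i(-\B_{r-3})$. Their equality is precisely the statement of the lemma for the \emph{same} $r$ with a smaller alphabet, which an induction hypothesis indexed by $r$ alone does not provide; asserting it at this point is circular. Only the coefficient check is covered by your hypothesis: on the left it is $-V_{r-1}(\X_2;\B_{r-3})$ by the observation, on the right it is $-\sum_{i\ge 1}V_{r-i}(\X_2;0)\,S_{i-1}(-\B_{r-3})$ by (\ref{bq}), and these agree by induction on $r$.

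The repair is standard, and two versions work. Either replace your second datum by the specialization of \emph{all} variables, $\B_{r-2}=0$, where both sides trivially equal $V_r(\X_2;0)$; the price is that the $b_{r-2}$-coefficient must then pin down every remaining coefficient, which it does: write $V_r(\X_2;\B_{r-2})=\sum_{i=0}^{r-2}c_i\,S_i(-\B_{r-2})$, legitimate because $V_r$ is symmetric and multilinear in the $b$'s, note its $b_{r-2}$-coefficient is $-\sum_{i\ge 1}c_i\,S_{i-1}(-\B_{r-3})$, and use the linear independence of $S_0(-\B_{r-3}),\dots,S_{r-3}(-\B_{r-3})$ together with the induction hypothesis to get $c_i=V_{r-i}(\X_2;0)$ for $i\ge 1$, then $c_0=V_r(\X_2;0)$ from the all-zero specialization. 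Or keep both of your checks but strengthen the statement to $V_r(\X_2;\B_s)=\sum_{i=0}^{s}V_{r-i}(\X_2;0)\,S_i(-\B_s)$ for all $s\le r-2$ and induct on $s$: the observation is valid for every alphabet size (by symmetry of $V_r$ in the $b$'s, or directly from (\ref{v}), whose coefficients $e_{r-i,j}$ depend only on $r-i$ and $j$), and then both the $b_s$-coefficient and the $b_s=0$ specialization are supplied by hypotheses with smaller $s$. Your sentence about ``iterating'' the observation one variable at a time is in fact this second argument; had you carried it out instead of the induction on $r$ alone, the proof would be complete.
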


Thus it is sufficient to compute $V_r(\X_2;0)$.

\begin{proposition}\label{Pv} For $r\ge 2$, we have
\begin{equation}\label{Vr0}
V_r(\X_2; 0)=
3^{r-2}\Bigl(3 S_{r-2}(\X_2) - 2S_{1,r-3}(\X_2)\Bigr)\,.
\end{equation}
\end{proposition}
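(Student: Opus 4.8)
The plan is to recognise $V_r(\X_2;0)$ as a single value of the auxiliary function $W(d,\A)$ attached to the Pascal staircase $P_f$, and then to evaluate it by means of Corollary~\ref{pcorollary}. Setting $\B_{r-2}=0$ in the expression~(\ref{v}) replaces each $S_i(-\D-\B_{r-2})$ by $S_i(-\D)$, so that
\[
V_r(\X_2;0)=\sum_{i,j} e_{r-i,j}\,S_i(-\D)\,S_{j,\,r-i-j-2}(\X_2).
\]
By the construction of $\overline H_r$ from $P$, the factorization property~(\ref{Fact}), and the shift $\Phi_3$, the coefficients $e_{r-i,j}$ are precisely the entries $p_{r-1-i,\,j+1}$ of the Pascal staircase $P=P_f$ of $f(z)=(5-6z)/\bigl((1-z)(1-2z)(1-3z)\bigr)$; hence, with $d=r-2$, the displayed sum is literally $W(r-2,\D)$. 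So the first task is to verify this identification, aligning the indices coming from $\Phi_3^{\,i}(\overline H_{r-i})$ with the columns of $P$.

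With $V_r(\X_2;0)=W(r-2,\D)$ in hand, I would use that $W(d,\A)$ is linear in the first column of $P$, i.e.\ in the Taylor coefficients of $f$. The partial fraction decomposition
\[
f(z)=-\frac{1/2}{1-z}-\frac{8}{1-2z}+\frac{27/2}{1-3z}
\]
expresses $W(d,\D)$ as $-\tfrac12 W_1-8\,W_2+\tfrac{27}{2}W_3$, where $W_y$ is built from the staircase $P_{1/(1-zy)}$. Decomposing $\D=\fbox{$x_1+x_2$}+\B$ with $\B=\D-\fbox{$x_1+x_2$}=\fbox{$2x_1$}+\fbox{$2x_2$}$, Corollary~\ref{pcorollary} gives $W_y(d,\D)=(y-1)y^{d-1}S_d(\X_2-y^{-1}\B)$ for $d\ge1$. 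Two of the three contributions now drop out: the $y=1$ term carries the factor $(y-1)=0$, and the $y=2$ term vanishes because $y^{-1}\B=\tfrac12\B=\X_2$, so that $S_d(\X_2-\X_2)=0$ for $d=r-2\ge1$. This collapse to a single surviving pole is what makes the final formula so short (the value $r=2$, i.e.\ $d=0$, is an initial case, to be checked directly).

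It then remains to evaluate the single surviving term. At $y=3$, $d=r-2$ one has $(y-1)y^{d-1}=2\cdot3^{r-3}$, so $\tfrac{27}{2}W_3(r-2,\D)=3^{r}\,S_{r-2}(\X_2-\tfrac13\B)$. Writing $h_d=S_d(\X_2)$ and letting $e_1,e_2$ be the elementary symmetric functions of $\X_2$, the generating series for $S_\bullet(\X_2-\tfrac13\B)$ gives
\[
S_d(\X_2-\tfrac13\B)=h_d-\tfrac23 e_1 h_{d-1}+\tfrac49 e_2 h_{d-2},
\]
and the rank-two relations $h_d=e_1 h_{d-1}-e_2 h_{d-2}$ and $S_{1,d-1}(\X_2)=e_2 h_{d-2}$ reduce the right-hand side to $\tfrac13 S_{r-2}(\X_2)-\tfrac29 S_{1,r-3}(\X_2)$. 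Multiplying by $3^{r}$ yields
\[
3^{r-1}S_{r-2}(\X_2)-2\cdot3^{r-2}S_{1,r-3}(\X_2)=3^{r-2}\bigl(3S_{r-2}(\X_2)-2S_{1,r-3}(\X_2)\bigr),
\]
which is the asserted value; by Lemma~\ref{Lvr} this suffices to recover the full $V_r(\X_2;\B_{r-2})$.

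The step I expect to be the main obstacle is the first one---matching the combinatorial coefficients $e_{r-i,j}$ of~(\ref{v}) with the staircase entries $p_{r-1-i,\,j+1}$---because this is where the definitions of $\overline H_r$, the operator $\Phi_3$, and the factorization~(\ref{Fact}) must be lined up exactly, and a misplaced index would corrupt the whole evaluation. Once that identification is secured, the rest is short and essentially mechanical, with the agreeable feature that, thanks to the two vanishings above, only the $y=3$ pole of $f$ contributes.
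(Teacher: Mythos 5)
Your proposal is, in substance, the paper's own proof: the same identification $V_r(\X_2;0)=W(r-2,\D)$ with the Pascal staircase of $f(z)=(5-6z)/\bigl((1-z)(1-2z)(1-3z)\bigr)$ (this identification, which you rightly single out as the delicate step, is correct), the same use of linearity in the first column — the paper phrases your partial-fraction step as ``specializing $y$ into $1,2,3$ successively'' — the same application of Corollary \ref{pcorollary} with $\B=\fbox{$2x_1$}+\fbox{$2x_2$}$, the same observation that only the pole $y=3$ survives, and the same final reduction. All your computations check out, and you have in fact silently repaired two slips in the paper's text: the alphabet must be $\D=\fbox{$2x_1$}+\fbox{$2x_2$}+\fbox{$x_1+x_2$}$ (as your identity $\D-\fbox{$x_1+x_2$}=\fbox{$2x_1$}+\fbox{$2x_2$}$ requires, and as the displayed $c(III_{2,2})$, $e(III_{2,2})$ and Eq.~(\ref{nIII22r}) force), and the staircase to be used is the one attached to $f$ above, not the staircase (\ref{pstair1}) that the proof in the text points to (the latter has no pole at $y=3$ and would give $0$).

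There is one genuine, though reparable, gap: the range in which you invoke Corollary \ref{pcorollary}. That corollary holds only ``apart from initial values''; the exact identity is
\[
W_y\bigl(d,\fbox{$x_1+x_2$}+\B\bigr)=(y-1)y^{d-1}S_d(\X_2-y^{-1}\B)+y^{-1}S_d(-\B)\,,
\]
and since $\B=\E$ has cardinality $2$, the correction term $y^{-1}S_d(-\E)$ is nonzero for $d\le 2$. Hence your per-pole claims — vanishing at $y=1$ and $y=2$, and the clean evaluation at $y=3$ — are literally valid only for $d=r-2\ge 3$. For instance, for $d=1$ one has $W_2(1,\D)=-S_1(\X_2)\ne 0$, so the $y=2$ term does not vanish when $r=3$. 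Consequently $r=3$ and $r=4$ are initial cases on the same footing as $r=2$; this is exactly why the paper states its formula for $W(d,\D)$ only for $d\ge 3$, lists $W(0),W(1),W(2)$ separately, and concludes by ``checking the cases $r=2,3,4$ directly'', whereas you flag only $r=2$. The repair is easy: either verify $r=3,4$ by hand as the paper does, or observe that the corrections cancel in your weighted sum, since $-\tfrac12\cdot 1-8\cdot\tfrac12+\tfrac{27}{2}\cdot\tfrac13=0$; with that one extra remark your argument becomes exact for all $d\ge 1$.
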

(In particular, $V_2(\X_2;0)=5$ and $V_3(\X_2;0)=9S_1(\X_2)$\,.)

We now apply Corollary \ref{pcorollary} from Subsection (\ref{pascal})
with $\B=\fbox{$2x_1$}+\fbox{$2x_2$}$. Expanding
\begin{multline*}
 S_d\left(\X_2 - y^{-1}(\fbox{$ 2x_1$}+\fbox{$ 2x_2$})\right)\\
= S_d(\X_2) - \frac{2x_1+2x_2}{y} S_{d-1}(\X_2)
 + 4\frac{x_1x_2}{y^2} S_{d-2}(\X_2)\,,
\end{multline*}
we get, for $d\ge 3$,
\begin{equation}
W(d,\D) = y^{d-2}(y-1)(y-2) S_{d}(\X_2))-2y^{d-3}(y-1)(y-2) S_{1,d-1}(\X_2)
\end{equation}
and initial conditions
\begin{gather*}
W(0) =1, \quad W(1)= (y-3)S_1(\X_2), \\
W(2)=(y-1)(y-2)S_2(\X_2)-2(y-3)S_{11}(\X_2)\,.
\end{gather*}

We come back to Proposition \ref{Pv}, and we take the Pascal staircase (\ref{pstair1}).
Then for $d=r-2$, the function $W(d,\D)$ is the function $V_r(\X_2;0)$. We thus have to 
specialize $y$ into $1,2,3$ successively.
Apart from initial values, only $y=3$ contributes, 
and we get, for $d\ge 3$,
$$ 
W(d,\D)= 3^{d+1}S_n(\X_2)- 2\cdot 3^d S_{1,d-1}(\X_2)\,.
$$

This proves Proposition \ref{Pv}, checking the cases $r=2,3,4$ directly.

\medskip

We now pass to the specialization 
$F_r(\X_2- \D-\B_{r-2})$. It is rather straightforward to prove the following
lemma (cf. \cite{PA3}).

\begin{lemma}\label{div} The resultant $R(\X_2,\D+\B_{r-2})$ divides 
$F_r(\X_2- \D- \B_{r-2})$.
\end{lemma}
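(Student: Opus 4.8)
The plan is to prove the divisibility factor by factor. The alphabet $\D$ occurring in the $III_{2,2}$--equation consists of the Chern roots $2x_1,2x_2,x_1+x_2$ read off from $c(III_{2,2})$, so that
\[
R(\X_2,\D+\B_{r-2})=R(\X_2,\D)\,R(\X_2,\B_{r-2})
=x_1^2x_2^2\,(x_1-2x_2)(x_2-2x_1)\prod_{\ell=1}^{r-2}(x_1-b_\ell)(x_2-b_\ell),
\]
the two powers of $x_1$ arising from the pairs $(x_1,2x_1)$ and $(x_2,x_1+x_2)$, and symmetrically for $x_2$. Since the ground ring is a unique factorization domain and the displayed linear forms are pairwise non-proportional, it is enough to show that $F_r(\X_2-\D-\B_{r-2})$ vanishes to order $1$ along each of the hyperplanes $x_1=2x_2$, $x_2=2x_1$, $x_1=b_\ell$, $x_2=b_\ell$, and to order $2$ along $x_1=0$ and $x_2=0$.

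The first-order vanishings I would obtain from the cancellation property together with Proposition~\ref{FBr}, using that $F_r=F^{(3)}_r$. For instance, putting $x_1=b_\ell$ makes $b_\ell$ a common letter of the positive alphabet $\X_2$ and the negative alphabet $\B_{r-2}$; cancelling it gives $F_r(x_2-\D-\B_{r-3})$, an evaluation on a one-letter positive alphabet, which by Proposition~\ref{FBr} equals the single-variable resultant $R(\fbox{$x_2$}+\fbox{$2x_2$}+\fbox{$3x_2$},\,\D+\B_{r-3})$; this vanishes because the letter $2x_2\in\D$ yields the factor $2x_2-2x_2$. The substitutions $x_2=b_\ell$, $x_1=2x_2$, $x_2=2x_1$ are handled the same way: one cancels the letter common to $\X_2$ and $\D$, reduces to one variable, and reads off a vanishing factor (for $x_1=2x_2$ it is $3x_2-3x_2$, produced by $x_1+x_2\mapsto 3x_2$, which matches the pattern $\fbox{$x$}+\fbox{$2x$}+\fbox{$3x$}$ of $A_3$).

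The main obstacle is the second-order vanishing along $x_1=0$; by the $x_1\leftrightarrow x_2$ symmetry of $F_r(\X_2-\D-\B_{r-2})$ this also settles $x_2^2$. At $x_1=0$ the whole positive alphabet is absorbed into the negative one, since simultaneously $x_1\mapsto 0=2x_1$ and $x_2=x_1+x_2$; thus two coincidences occur at once, and after cancelling them the positive alphabet is empty while the negative one has cardinality $r-1$. Every partition indexing $F_r$ has largest part $\ge r>r-1$, so $F_r(\X_2-\D-\B_{r-2})|_{x_1=0}=0$, which only gives order $1$. One cannot separate the two coincidences by deforming $\D$, because any such deformation destroys one of the first-order vanishings above, so I would argue directly.

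To upgrade order $1$ to order $2$ I would write $F_r$ through the Jacobi--Trudi determinants of Definition~\ref{sf} in the complete functions $S_i(\X_2-\D-\B_{r-2})$, whose generating series is
\[
\sum_i S_i(\X_2-\D-\B_{r-2})\,z^i=\frac{(1-2x_1z)(1-2x_2z)(1-(x_1+x_2)z)}{(1-x_1z)(1-x_2z)}\prod_{\ell}(1-b_\ell z),
\]
and show that $\partial_{x_1}F_r$ vanishes at $x_1=0$ as well. The subtlety I expect here is that the individual summands of $\partial_{x_1}F_r|_{x_1=0}$ do not vanish termwise --- only their combination does --- so the computation must exploit the precise coefficients $S_J(\fbox{$2$}+\fbox{$3$})$ of $F_r$, presumably after peeling the two merging letters off the determinants by repeated use of the linearity formula~(\ref{bq}), each peeling contributing one factor of $x_1$. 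Verifying that this peeling produces the full factor $x_1^2$ rather than only $x_1$ is the crux of the argument and is where most of the effort will go; as a consistency check, for $r=2$ one computes $F_2(\X_2-\D)=-5\,x_1^2x_2^2(x_1-2x_2)(x_2-2x_1)$, in which both squares are visible.
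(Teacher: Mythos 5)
Your overall strategy --- factoring $R(\X_2,\D+\B_{r-2})$ into pairwise non-proportional linear forms over a UFD and counting vanishing orders along each --- is legitimate, and the parts you actually prove are correct. In particular you work with the right alphabet $\D=\fbox{$2x_1$}+\fbox{$2x_2$}+\fbox{$x_1+x_2$}$ (the paper's displayed Definition misprints $\D$, but this is what its computations and the normalization $R(\X_2,\D+\B_{r-2})=e(III_{2,2})$ require), and the first-order vanishings along $x_1-b_\ell$, $x_2-b_\ell$, $x_1-2x_2$, $x_2-2x_1$ do follow exactly as you say from the cancellation property together with Proposition \ref{FBr}.

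The genuine gap is the second-order vanishing along $x_1=0$ (and $x_2=0$). Your cancellation argument at $x_1=0$ yields order $1$ only; the upgrade to order $2$ is exactly the content of the factor $x_1^2x_2^2$ in the resultant, and you leave it as a plan (``peeling \ldots\ each peeling contributing one factor of $x_1$'', ``where most of the effort will go'') rather than an argument. As you observe yourself, $\partial_{x_1}F_r|_{x_1=0}$ does not vanish summand by summand, so this step requires a genuine identity involving the coefficients $S_{j_1,j_2}(\fbox{$2$}+\fbox{$3$})$, and nothing in the proposal supplies it; what is actually established is only that $R(\X_2,\D+\B_{r-2})/(x_1x_2)$ divides $F_r(\X_2-\D-\B_{r-2})$. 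For comparison: the survey itself gives no proof of Lemma \ref{div} (it defers to \cite{PA3}), but its neighbouring Proposition \ref{gr} shows how such factors are exhibited by the paper's methods, with no multiplicity analysis at all: one writes $F_r(\X_2-\D)$ as $\pi$ applied to the multi-Schur function $S_{r,r;r}(\X_2+\fbox{$2x_1$}+\fbox{$3x_1$}-\D;x_1-\D)$, factors this determinant by the Transformation Lemma \ref{TL} into $S_{r,r}(x_2+\fbox{$2x_1$}+\fbox{$3x_1$}-\D)\cdot S_r(x_1-\D)$, and then two applications of the factorization property (\ref{Fact}) each contribute one copy of $x_1x_2$, so the full square $x_1^2x_2^2$ (indeed all of $R(\X_2,\D)$) appears automatically; the general alphabet $\B_{r-2}$ is then reached by a recursion in the $b_\ell$'s as in Lemma \ref{Lur}. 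Your hyperplane-by-hyperplane route would be complete only once the missing derivative computation is carried out, and that computation is precisely where the difficulty of the lemma lives.
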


We set
\begin{equation}
U_r(\X_2;\B_{r-2}):=\frac{F_r(\X_2\moins \D\moins \B_{r-2})}{R(\X_2,\D+\B_{r-2})}\,.
\end{equation} 

Note that each variable
$b\in \B_{r-2}$ appears at most with degree $3$ in\break
$F_r(\X_2\moins \D\moins \B_{r-2})$, and hence at most
with degree $1$ in $U_r(\X_2;\B_{r-2})$. We have the following precise
recursive relation which follows from the observation that
the coefficient of $b_{r-2}^3$ in
$F_r(\X_2\moins \D\moins \B_{r-2})$
is equal to $F_{r-1}(\X_2\moins \D\moins \B_{r-3})$.

\begin{lemma}\label{Lur} For $r\ge 2$, we have
\begin{equation}
U_r(\X_2; \B_{r-2})= \sum_{i=0}^{r-2} \ U_{r-i}(\X_2; 0)
\ S_i(-\B_{r-2})\,.
\end{equation}
\end{lemma}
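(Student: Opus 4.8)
The plan is to mimic the proof of Lemma~\ref{Lvr}, exploiting that $U_r(\X_2;\B_{r-2})$ is symmetric in $\B_{r-2}$ and, as observed just above the statement, has degree $\le 1$ in each variable $b_j$. A symmetric polynomial that is multilinear in $b_1,\dots,b_{r-2}$ is a unique ${\bf Z}[x_1,x_2]$-linear combination of the elementary symmetric functions, i.e.\ of the $S_i(-\B_{r-2})$; so I would first write
$$
U_r(\X_2;\B_{r-2})=\sum_{i=0}^{r-2} c_i(\X_2)\,S_i(-\B_{r-2}),
$$
with coefficients $c_i$ independent of $\B_{r-2}$, and reduce the lemma to the identification $c_i=U_{r-i}(\X_2;0)$. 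The argument then runs by induction on $r$, the case $r=2$ being immediate since $\B_{0}$ is empty and both sides reduce to $U_2(\X_2;0)$.

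For the inductive step I would extract the coefficient of $b_{r-2}$ on both sides. On the structural side, factor the resultant as
$$
R(\X_2,\D+\B_{r-2})=R(\X_2,\D+\B_{r-3})\,(x_1-b_{r-2})(x_2-b_{r-2}),
$$
the quadratic factor being monic in $b_{r-2}$. Since $F_r(\X_2-\D-\B_{r-2})=R(\X_2,\D+\B_{r-2})\,U_r(\X_2;\B_{r-2})$, with the division legitimate by Lemma~\ref{div}, and since $U_r$ has degree $\le 1$ in $b_{r-2}$ while $R$ has degree exactly $2$, comparing top $b_{r-2}$-degrees shows that the coefficient of $b_{r-2}$ in $U_r$ equals the coefficient of $b_{r-2}^3$ in $F_r(\X_2-\D-\B_{r-2})$ divided by $R(\X_2,\D+\B_{r-3})$. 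The stated observation then identifies this, up to the sign discussed below, with $F_{r-1}(\X_2-\D-\B_{r-3})/R(\X_2,\D+\B_{r-3})=U_{r-1}(\X_2;\B_{r-3})$.

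On the expansion side, I would use the linearity formula (\ref{bq}) in the form $S_i(-\B_{r-2})=S_i(-\B_{r-3})-b_{r-2}S_{i-1}(-\B_{r-3})$ to read off that the coefficient of $b_{r-2}$ in $\sum_i c_i\,S_i(-\B_{r-2})$ is $-\sum_{i\ge1}c_i\,S_{i-1}(-\B_{r-3})$. Equating the two expressions for this coefficient and invoking the inductive hypothesis $U_{r-1}(\X_2;\B_{r-3})=\sum_m U_{r-1-m}(\X_2;0)\,S_m(-\B_{r-3})$, the ${\bf Z}$-linear independence of the $S_m(-\B_{r-3})$ (the basis property, Theorem~\ref{Tss}) lets me match coefficients termwise and conclude $c_i=U_{r-i}(\X_2;0)$ for $i\ge1$; the term $i=0$ is fixed separately by setting $\B_{r-2}=0$, which gives $c_0=U_r(\X_2;0)$.

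The delicate point, and the step I expect to cause trouble, is the bookkeeping that converts the coefficient of $b_{r-2}^3$ in $F_r$ into the coefficient of $b_{r-2}$ in $U_r$: one must check that the monicity of $(x_1-b_{r-2})(x_2-b_{r-2})$ makes the leading coefficients divide cleanly with no contamination from lower $b_{r-2}$-degrees, and one must carry the signs correctly through both the determinantal shift $S_m\mapsto -S_{m-1}$ implicit in the $b_{r-2}^3$-coefficient of $F_r$ and the minus sign in (\ref{bq}); these two contributions are what make the final coefficients come out positive. A companion verification is that the ``extra'' summands of $F_r$ with $j_2=r$ contribute nothing, because they are indexed by partitions with a negative first part and hence vanish, so that the reduced sum is genuinely $F_{r-1}$. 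Once these sign and degree checks are secured, the remaining linear-algebra matching is routine.
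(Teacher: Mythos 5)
Your proof is correct and follows essentially the paper's own route: the paper derives the lemma from precisely the two facts you isolate, namely that each $b_j$ occurs with degree at most $1$ in $U_r(\X_2;\B_{r-2})$ and that the coefficient of $b_{r-2}^3$ in $F_r(\X_2-\D-\B_{r-2})$ is $\pm F_{r-1}(\X_2-\D-\B_{r-3})$, leaving the induction, symmetry/multilinearity decomposition, and coefficient-matching implicit. Your sign bookkeeping is in fact more careful than the paper's prose, which states this coefficient as $F_{r-1}(\X_2-\D-\B_{r-3})$ whereas the determinantal shift (three factors $-b_{r-2}S_{\cdot-1}$) makes it $-F_{r-1}(\X_2-\D-\B_{r-3})$; as you observe, this minus sign cancels against the one coming from the linearity formula (\ref{bq}), which is exactly what makes the coefficients $U_{r-i}(\X_2;0)$ come out with the stated (positive) sign.
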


Let $\pi$ be the endomorphism of the $\bf C$-vector space of functions 
of $x_1,x_2$, defined by
$$
\pi \bigl(f(x_1,x_2)\bigr) =
\frac{x_1 f(x_1,x_2)-x_2 f(x_2,x_1)}{x_1-x_2}\,.
$$
For any $i,j\in {\bf N}$, we have
\begin{equation}\label{pi}
\pi(x_1^jx_2^i)=S_{i,j}(\X_2)\,.
\end{equation}

The proof of the following proposition will make use of multi-Schur functions
(see the end of Section \ref{Schur}).

\begin{proposition}\label{gr} For $r\ge 2$, we have
\begin{equation}\label{ilo} 
F_r(\X_2-\D)=-3^{r-2}R(\X_2,\D) (x_1x_2)^{r-2}\bigl(3S_{r-2}(\X_2)-2S_{1,r-3}(\X_2)\bigr)\,.
\end{equation}
\end{proposition}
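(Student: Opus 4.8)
My plan is to extract the resultant factor and then compute what is left. By Lemma \ref{div} the resultant $R(\X_2,\D)$ divides $F_r(\X_2-\D)$, so it suffices to identify the quotient $U_r(\X_2;0)=F_r(\X_2-\D)/R(\X_2,\D)$, the general $\B_{r-2}$ being recovered afterwards from Lemma \ref{Lur}. Starting from $F_r=\sum_{j_1\le j_2\le r}S_{j_1,j_2}(\fbox{$2$}+\fbox{$3$})\,S_{r-j_2,r-j_1,r+j_1+j_2}$, I would evaluate at $\X_2-\D$ and apply the factorization property (\ref{Fact}) (with $m=2$, $n=3$) to every summand whose two largest parts are $\ge 3$, i.e. for $j_1\le r-3$. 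Each such term yields the common factor $R(\X_2,\D)$ together with $S_{r-j_2}(-\D)\,S_{r-j_1-3,\,r+j_1+j_2-3}(\X_2)$; since $\card(\D)=3$ we have $S_{r-j_2}(-\D)=0$ for $j_2<r-3$, so, consistently with the vanishing property (\ref{van}), the sum truncates to $j_2\in\{r-3,r-2,r-1,r\}$.

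The delicate point — and the reason this is harder than the computation of $V_r$ in Proposition \ref{Pv}, where each Schur summand already carries two large parts — is that a summand of $F_r$ has only the single large part $r+j_1+j_2$. Hence the factorization breaks down exactly for the ``edge'' terms with $j_1>r-3$ (those whose middle part $r-j_1<3$), and in addition the factors $S_{r-j_2}(-\D)$ that do appear still depend on $\D$, whereas the right-hand side depends on $\D$ only through $R(\X_2,\D)$. To handle the edge terms uniformly and to show that all remaining $\D$-dependence collapses into $R(\X_2,\D)$, I would recast the sum, by the multi-Schur calculus of Section \ref{Schur}, as a single determinant and invoke the Transformation Lemma \ref{TL} to adjoin copies of $\D$ to its successive rows. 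This is the step I expect to be the main obstacle: one must choose the auxiliary alphabets in Lemma \ref{TL} so as to absorb the residual $\D$-dependence, and then keep careful track of indices and signs as the determinant collapses to $R(\X_2,\D)$ times a multi-Schur function in $\X_2$ alone.

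With the $\D$-dependence localized in $R(\X_2,\D)$, the surviving factor is a symmetric function of $\X_2$ of degree $3(r-2)$, which I would evaluate by reducing to one variable. The single-variable input is Proposition \ref{FBr}, $F^{(3)}_r(x-\B_r)=R(x+\fbox{$2x$}+\fbox{$3x$},\B_r)$, and the promotion to the rank-two alphabet is achieved through the operator $\pi$, using $\pi(x_1^jx_2^i)=S_{i,j}(\X_2)$ together with the fact that $\pi$ commutes with multiplication by symmetric functions of $\X_2$. I expect this to reproduce the very Pascal-staircase computation already carried out for Proposition \ref{Pv}: by Corollary \ref{pcorollary} with $\B=\fbox{$2x_1$}+\fbox{$2x_2$}$ only the specialization $y=3$ contributes, giving $3^{d+1}S_d(\X_2)-2\cdot 3^{d}S_{1,d-1}(\X_2)$, so that for $d=r-2$ one obtains $3^{r-2}\bigl(3S_{r-2}(\X_2)-2S_{1,r-3}(\X_2)\bigr)$. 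The remaining sign and the factor $(x_1x_2)^{r-2}$, produced by the shift $b\mapsto b+3$ effected by the factorization, then deliver the asserted expression.

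To pin down the overall sign and constant I would check $r=2,3,4$ directly, as in Proposition \ref{Pv}; for instance $F_2(\X_2-\D)=-5\,R(\X_2,\D)$, consistent with the formula because $S_0(\X_2)=1$ and $S_{1,-1}(\X_2)=-1$. An alternative to the multi-Schur reduction is an induction based on the recursion $F_r=\overline{F_r}+\Phi_3(F_{r-1})$ of (\ref{FF}): granting the formula for $r-1$, one would only need to understand how evaluation at $\X_2-\D$ interacts with adjoining a single column of length three, separating the length-$<3$ part $\overline{F_r}$ from $\Phi_3(F_{r-1})$.
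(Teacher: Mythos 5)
You have assembled the right toolbox---Lemma \ref{div}, multi-Schur functions, the Transformation Lemma and the operator $\pi$ are exactly what the paper's proof uses---and you correctly locate the difficulty in the edge terms with $j_1>r-3$. But the step you yourself call ``the main obstacle'' is the whole proof, and your proposal does not overcome it; moreover the way you describe it (recast $F_r(\X_2-\D)$ itself as a determinant, then adjoin copies of $\D$ via Lemma \ref{TL}) is not what works. The missing idea is that $F_r(\X_2-\D)$ is realized as the image under $\pi$ of the auxiliary multi-Schur function
\begin{equation*}
S_{r,r;r}(\X_2+\fbox{$2x_1$}+\fbox{$3x_1$}-\D\,;\,x_1-\D)\,,
\end{equation*}
in which the numeric alphabet $\fbox{$2$}+\fbox{$3$}$ is lifted to $\fbox{$2x_1$}+\fbox{$3x_1$}$ (so that powers of $x_1$ record the weights $j_1+j_2$) and only the \emph{last} row is one-variable: the cofactors of that row are symmetric in $x_1,x_2$ and $\pi\bigl(S_j(x_1-\D)\bigr)=S_j(\X_2-\D)$, so applying $\pi$ gives back exactly $F_r(\X_2-\D)$. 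Lemma \ref{TL} is then applied to \emph{this} determinant, subtracting $x_1$ from its first two rows; for $r\ge3$ this kills the entries $S_{r+2}(-\D)$, $S_{r+1}(-\D)$ in the last column, the determinant collapses to $S_{r,r}(x_2+\fbox{$2x_1$}+\fbox{$3x_1$}-\D)\cdot S_r(x_1-\D)$, both factors split into linear factors, and $\pi(x_1^jx_2^i)=S_{i,j}(\X_2)$ produces the right-hand side of (\ref{ilo}).

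Your proposed endgame would not close this gap, and parts of it would fail. Proposition \ref{FBr} evaluates $F_r$ at $x-\B_r$ with $\card(\B_r)=r$, which is not the specialization $\X_2-\D$; and one cannot ``promote'' a genuinely one-variable evaluation by $\pi$, since $\pi\bigl(S_{a,b,c}(x_1-\D)\bigr)\ne S_{a,b,c}(\X_2-\D)$---only the last row of a determinant may be one-variable, which is precisely why the auxiliary multi-Schur function above is needed. Worse, appealing to ``the very Pascal-staircase computation of Proposition \ref{Pv}'' is circular: that computation evaluates $V_r$, i.e.\ the $H_r$-side, and works only because the coefficients of $H_r$ are staircase entries by construction; no mechanism is offered that puts $U_r$ in the form $W(d,\A)$. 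It would also produce the wrong sign: Proposition \ref{Pv} yields $+3^{r-2}\bigl(3S_{r-2}-2S_{1,r-3}\bigr)$, whereas $U_r(\X_2;0)$ is its negative, and this sign is the entire point---it is what makes $U_r+V_r=0$ and hence the vanishing (\ref{frhr}) hold, so it cannot be recovered from a computation that literally ``reproduces'' Proposition \ref{Pv}. The same objection applies to the suggested induction on (\ref{FF}): a column of length $3$ is not a full column for alphabets of ranks $(2,3)$, so $\Phi_3$ has no multiplicative effect under the specialization, and the unexplained terms $\overline{F_r}$ are precisely the edge terms you could not factor. Checking $r=2,3,4$ is fine for fixing constants, but it cannot replace the missing determinantal argument.
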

\begin{proof}
The identity is true for $r=2$.
To prove the assertion for $r\ge 3$, 
we compute in two different ways the action of $\pi$ on the multi-Schur function
\begin{equation}\label{multi}
S_{r,r;r}(\X_2+\fbox{$2x_1$}+\fbox{$3x_1$}-\D;x_1-\D)\,.
\end{equation}

Firstly, expanding (\ref{multi}), we have
$$
\aligned
&\pi \bigl(S_{r,r;r}(\X_2+\fbox{$2x_1$}+\fbox{$3x_1$}-\D;x_1-\D)\bigr)\cr
&=\pi \bigl(\sum_{j_1\le j_2 \le r} S_{j_1,j_2}(\fbox{$2x_1$}+\fbox{$3x_1$}) \ 
S_{r-j_2,r-j_1;r}(\X_2-\D;x_1-\D)\bigr)\cr
&=\pi \bigl(\sum_{j_1\le j_2 \le r} S_{j_1,j_2}(\fbox{$2$}+\fbox{$3$}) \ 
S_{r-j_2,r-j_1;r+j_1+j_2}(\X_2-\D;x_1-\D)\bigr)\cr
&=\sum_{j_1\le j_2 \le r} S_{j_1,j_2}(\fbox{$2$}+\fbox{$3$}) \ 
S_{r-j_2,r-j_1,r+j_1+j_2}(\X_2-\D)\cr
&=F_r(\X_2-\D)\,.
\endaligned
$$

Secondly, using Lemma \ref{TL}, we subtract $x_1$ from the arguments in the first two rows of the determinant (\ref{multi}) 
without changing its value. We get the determinant

\smallskip

$$
\small
\begin{vmatrix}
S_r(x_2+\fbox{$2x_1$}+\fbox{$3x_1$}-\D)  & S_{r+1}(x_2+\fbox{$2x_1$}+\fbox{$3x_1$}-\D)& S_{r+2}(-\D)\\
S_{r-1}(x_2+\fbox{$2x_1$}+\fbox{$3x_1$}-\D) & S_r(x_2+\fbox{$2x_1$}+\fbox{$3x_1$}-\D)& S_{r+1}(-\D) \\
S_{r-2}(\X_2+\fbox{$2x_1$}+\fbox{$3x_1$}-\D)& S_{r-1}(\X_2+\fbox{$2x_1$}+\fbox{$3x_1$}-\D)& S_r(x_1-\D) 
\end{vmatrix}\,.
$$

\bigskip

\noindent
Since the elements in the first two rows of the third column are zero, this determinant is equal to
$$
S_{r,r}(x_2+\fbox{$2x_1$}+\fbox{$3x_1$}-\D) \cdot S_r(x_1-\D)\,.
$$
Since
$$
x_2+\fbox{$2x_1$}+\fbox{$3x_1$}-\D=x_2+\fbox{$3x_1$}-\fbox{$2x_2$}-\fbox{$x_1+x_2$}
$$
and the following two factorizations hold:
$$
S_{r,r}(x_2+\fbox{$3x_1$}-\fbox{$2x_2$}-\fbox{$x_1+x_2$})= 
-3^{r-2}(x_2-2x_1)(x_1x_2)^{r-1}(3x_1-2x_2)
$$
and
$$
S_r(x_1-\D)=x_1^{r-2}x_2(x_1-2x_2)\,,
$$
we infer that
\begin{multline}\label{factor}
S_{r,r;r}(\X_2+\fbox{$2x_1$}+\fbox{$3x_1$}-\D;x_1-\D)\\{}=
-3^{r-2}R(\X_2,\D)(x_1x_2)^{r-2}x_1^{r-3}(3x_1-2x_2)\,.
\end{multline}
By (\ref{pi}), the result of applying $\pi$ to (\ref{factor}) is
$$
-3^{r-2}R(\X_2,\D) (x_1x_2)^{r-2}\bigl(3S_{r-2}(\X_2)-2S_{1,r-3}(\X_2)\bigr)\,.
$$

Comparison of both computations of $\pi$ applied to (\ref{multi}) 
yields the proposition.
\end{proof}

In terms of $U_r$, we rewrite Proposition \ref{gr} into 

\begin{corollary}\label{Pu} For $r\ge 2$, we have
\begin{equation}\label{Ur0}
U_r(\X_2;0)=-3^{r-2}\bigl(3 S_{r-2}(\X_2)-2S_{1,r-3}(\X_2)\bigr)\,.
\end{equation}
\end{corollary}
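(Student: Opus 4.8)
The plan is to read the corollary off directly from Proposition \ref{gr}, since $U_r(\X_2;0)$ is by definition nothing but the quotient $F_r(\X_2-\D-\B_{r-2})/R(\X_2,\D+\B_{r-2})$ evaluated at the specialization in which the alphabet $\B_{r-2}$ is set to zero. The only genuine content is to keep careful track of what this specialization does to the numerator and to the resultant in the denominator. By Lemma \ref{div} the quotient $U_r(\X_2;\B_{r-2})$ is already an honest (polynomial) symmetric function before any specialization, so I may first form the quotient and then specialize $\B_{r-2}\to 0$, rather than worrying about whether divisibility survives.

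First I would compute the effect of the specialization on the numerator. Writing $F_r$ as a $\Z$-combination of Schur functions $S_\mu$, each term becomes $S_\mu(\X_2-\D-\B_{r-2})$; appending the $r-2$ zeros of $\B_{r-2}$ to the subtracted alphabet multiplies the generating series of Definition \ref{cf} by factors $(1-0\cdot z)=1$, hence leaves every complete function $S_i$, and therefore every $S_\mu$, unchanged. Thus the numerator specializes to $F_r(\X_2-\D)$. Next I would treat the resultant: by its product definition (\ref{res}), $R(\X_2,\D+\B_{r-2})$ splits as $R(\X_2,\D)\prod_{a\in\X_2}\prod_{j=1}^{r-2}(a-b_j)$, and setting every $b_j=0$ turns the second factor into $\prod_{a\in\X_2}a^{r-2}=(x_1x_2)^{r-2}$. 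Hence $U_r(\X_2;0)=F_r(\X_2-\D)\big/\bigl(R(\X_2,\D)\,(x_1x_2)^{r-2}\bigr)$.

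Finally I would substitute Proposition \ref{gr}, namely $F_r(\X_2-\D)=-3^{r-2}R(\X_2,\D)(x_1x_2)^{r-2}\bigl(3S_{r-2}(\X_2)-2S_{1,r-3}(\X_2)\bigr)$, and cancel the (nonzero) common factor $R(\X_2,\D)(x_1x_2)^{r-2}$ to obtain $U_r(\X_2;0)=-3^{r-2}\bigl(3S_{r-2}(\X_2)-2S_{1,r-3}(\X_2)\bigr)$, as claimed. I expect no serious obstacle: all the substantive work is already carried out in Proposition \ref{gr} (together with the divisibility of Lemma \ref{div}), and the only place one could slip is the bookkeeping of the specialization $\B_{r-2}\to0$, in particular the appearance of the extra factor $(x_1x_2)^{r-2}$ in the resultant. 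This factor is exactly what reconciles the degree $r-2$ of the asserted right-hand side with the degree $3r$ of $F_r(\X_2-\D)$, so it is worth isolating explicitly. As a consistency check I would verify the small cases against $U_2(\X_2;0)=-5$ and $U_3(\X_2;0)=-9S_1(\X_2)$, using that $S_{1,-1}(\X_2)=-1$ and $S_{1,0}(\X_2)=0$ by the determinantal definition (\ref{schur}).
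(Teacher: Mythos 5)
Your proposal is correct and takes essentially the same route as the paper, which obtains Corollary \ref{Pu} precisely by rewriting Proposition \ref{gr} in terms of $U_r$. Your explicit bookkeeping of the specialization $\B_{r-2}\to 0$ — the numerator becoming $F_r(\X_2-\D)$ while the resultant acquires the factor $(x_1x_2)^{r-2}$, which makes the degrees match — is exactly the step the paper leaves implicit, and your small-case checks ($U_2(\X_2;0)=-5$, $U_3(\X_2;0)=-9S_1(\X_2)$) agree with the values of $V_r(\X_2;0)$ recorded after Proposition \ref{Pv}.
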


These are the essential computations with Schur functions leading to the proof of Theorem \ref{a3}.

\section{Towards the Thom polynomial of $III_{3,3}(r)$}\label{III33}

The singularity
$III_{3,3}(r)$ has codimension $4r+2$. So, the partitions that we need to consider have weight $4r+2$. Moreover, all diagrams contain the partition $(r+1,r+1)$, have at most 4 rows and the length of the second row is at most $r$. Let 
$\mathbf{D}_r$ denote the set of all such diagrams. By $\mathbf{D}_{r,2}, \mathbf{D}_{r,3} and \mathbf{D}_{r,4}$ we shall denote the subsets of $\mathbf{D}_r$, that consist of diagrams with 2,3 and 4 rows, respectively.

Set ${\mathcal T}_r:=\cT^{III_{3,3}}_r$.
Then, the part of ${\mathcal T}_r$ corresponding to the partitions in $\mathbf{D}_{r,4}$ is given by $\Phi_{4}({\mathcal T}_{r-1})$.

The Thom polynomial ${\mathcal T}_r$ must satisfy the following system of equations: (\ref{air}) for $i=0,1,2,3,4$\,, (\ref{I22r}),
(\ref{I23r}), (\ref{III22r}), (\ref{III23r}), (\ref{III24r}) together with the normalizing equation (\ref{nIII33r}).

For a partition $I \in \mathbf{D}_r$, we have
\begin{equation*}
\begin{split}
S_I(-\B_{r-1})= & S_I(x-\B_{r-1}-\fbox{$2x$})\\
= & S_I(x\moins \B_{r-1}-\fbox{$3x$}) \\
= & S_I(x\moins \B_{r-1}-\fbox{$4x$})=0\, .
\end{split}
\end{equation*}
Hence Eqs. (\ref{air}) for $i=0,1,2,3,4$ are satisfied automatically by any linear combination of Schur functions indexed by partitions in $\mathbf{D}_r$. Moreover, Eq.(\ref{I22r}) implies Eq.(\ref{III22r}) by the substitution $b_{r-1}=\fbox{$x_1+x_2$}$. 
Hence we can replace the former set of equations by a smaller set of equations consisting of Eqs. : (\ref{I22r}), (\ref{I23r}), (\ref{III23r}),  (\ref{III24r}) and (\ref{nIII33r}). Note that in these equations, the alphabets we need to consider, are suitable for the factorization property (\ref{Fact}) associated with a pair of alphabets of cardinalities $r+1$ and $2$.

In \cite{algorithm}, we give an algorithm based on ACE (cf. \cite{V}) which solves the latter system of equations.
Using this algorithm, we get the (unique) $\cT_r$ for $r=2,\ldots, 8$, expanded in the Schur function basis. In the next example, we give $\cT_r$ for $r=2,3$, and in Section \ref{appendix1}, we give $\cT_4,\ldots,\cT_8$.

\begin{example} We have
\begin{equation*}
\begin{split}
{\mathcal T}_2  & = 4S_{37} + 16 S_{46} + 28 S_{55}   \\
 + &  20 S_{145} + 6 S_{136} + 7 S_{235} + 3 S_{244} \\
+ & 2 S_{1135} + 3 S_{1234}+ 6 S_{1144}+ S_{2233}\,; \\
\end{split}
\end{equation*}

\begin{equation*}
\begin{split}
{\mathcal T}_3  & = 8S_{4,10} +40 S_{59} +88S_{68} +120S_{77}   \\
& + 12S_{149} +52S_{158} +100S_{167}  \\
& +14 S_{248} +50S_{257} +20S_{266} \\
& +15S_{347} +10S_{356} + \Phi_4({\mathcal T}_2)\,.\\
\end{split}
\end{equation*}

\end{example}

In \cite{O3}, the author proposes a conjecture about the recursion for the coefficients in the Schur function 
expansion of $\cT_r$. This recursion is checked for $2\le r\le 8$, with the help of an algorithm in \cite{algorithm}.

\section{On the Thom polynomial of $I_{2,3}(r)$}\label{I23}

Set ${\cal T}_r:={\cal T}_r^{I_{2,3}}$. 
The singularity
$I_{2,3}(r)$ has codimension $4r+1$. So, the partitions that we need to consider have weight $4r+1$. Moreover, all diagrams contain the partition $(r+1,r+1)$ and have at most 4 rows. 
Then, the part of ${\cT}_r$ corresponding to the partitions with 4 rows is given by $\Phi_4({\mathcal T}_{r-1})$.

The Thom polynomial ${\cT}_r$ must satisfy the following system of equations: (\ref{air}) for $i=0,1,2,3$\,, (\ref{I22r}),
(\ref{III22r}), (\ref{III23r}) together with the normalizing equation (\ref{nI23r}).

An algorithm analogous to the one in \cite{algorithm}, allows us to get the (unique) solutions $\cT_r$ of this system of equations
for $r=1,\ldots, 7$, expanded in Schur function basis. In the next example, we give $\cT_r$ for $r=1,2,3$, and in Section \ref{appendix2}, we give $\cT_4,\ldots,\cT_7$.

\begin{example} We have
\begin{equation*}
{\mathcal T}_1 = 2 S_{122} + 4 S_{23};
\end{equation*}

\begin{equation*}
 {\mathcal T}_2 = 32 S_{36} + 24 S_{45} + 24 S_{135} + 12 S_{144} + 12 S_{234} +3  S_{333}+\Phi_4({\mathcal T}_1)\,; 
\end{equation*}

\begin{equation*}
\begin{split}
 {\mathcal T}_3  & =
   208 S_{49} 
 + 208 S_{58} 
 + 112 S_{67}\\
& + 168 S_{148}
 + 152 S_{157}
 + 56 S_{166}\\
 & + 100 S_{247}
 + 76 S_{256} \\
& + 50 S_{346}
 + 24 S_{355}\\
& + 18 S_{445}
+\Phi_4({\mathcal T}_2)\,.
\end{split}
\end{equation*}

\end{example}

\section{Appendix 1: $\cT^{III_{3,3}}_r$, $r=4,\ldots, 8$}\label{appendix1}
\small

Let ${\mathcal T}_r=\cT^{III_{3,3}}_r$. We have

\begin{equation*}
\begin{split}
{\mathcal T}_4  & =  16S_{5,13} + 96S_{6,12} + 256S_{7,11} + 416S_{8,10} + 496 S_{9,9}   \\
& +24 S_{1,5,12} +128 S_{1,6,11} +304 S_{1,7,10} +448 S_{189} \\
& +28 S_{2,5,11} +128 S_{2,6,10} + 264S_{279} +100 S_{288} \\
& + 30S_{3,5,10} + 112S_{369} +70 S_{378} \\
& +31 S_{459} +25 S_{468} +10S_{477} +\Phi_4({\mathcal T}_3)\,;\\
\end{split}
\end{equation*}

\begin{equation*}
\begin{split}
{\mathcal T}_5  & =  32S_{6,16}+224S_{7,15}+704S_{8,14}+1344S_{9,13 }+1824S_{10,12}+2016S_{11,11 }\\
& + 48 S_{1,6,15 }+304S_{1,7,14 }+864S_{1,8,13 }+1504S_{1,9,12}+1904S_{1,10,11 } \\
& + 56S_{2,6,14 }+312S_{2,7,13 }+784S_{2,8,12 }+1232S_{2,9,11} +448S_{2,10,10} \\
& + 60 S_{3,6,13 }+284S_{3,7,12 }+616S_{3,8,11 }+364S_{3,9,10 }\\
& + 62S_{4,6,12 }+238S_{4,7,11 }+182S_{4,8,10 }+70S_{499 }  \\
& + 63 S_{5,6,11 }+56S_{5,7,10 }+35S_{589 } +\Phi_4({\mathcal T}_4)\,; \\
\end{split}
\end{equation*}

\begin{equation*}
\begin{split}
{\mathcal T}_6 &= 64S_{7,19}+512S_{8,18}+1856S_{9,17}+4096S_{10,16}+6336S_{11,15}+7680S_{12,14}+8128S_{13,13} \\
&+96 S_{1,7,18}+704S_{1,8,17}+2336S_{1,9,16}+4736S_{1,10,15}+6816S_{1,11,14}+7872S_{1,12,13} \\
&+112S_{2,7,17}+736 S_{2,8,16}+2192S_{2,9,15} +4032 S_{2,10,14}+5392S_{2,11,12}+1904S_{2,12,12}\\ 
&+120S_{3,7,16}+688S_{3,8,15} +1800 S_{3,9,14} +2976S_{3,10,13}+1680S_{3,11,12}\\  
& +124 S_{4,7,15}+600S_{4,8,14}+1348S_{4,9,13}+980S_{4,10,12}+364S_{4,11,11}  \\
&+ 126 S_{5,7,14} +492S_{5,8,13} +420S_{5,9,12}+252S_{5,10,11}\\
&+127S_{6,7,13} +119S_{6,8,12}+91S_{6,9,11}+35S_{6,10,10}+ \Phi_4({\mathcal T}_5)\,; \\
\end{split}
\end{equation*}

\vspace{0.3cm}
\begin{equation*}
\begin{split}
{\mathcal T}_7  & =128 S_{8,22}+1152S_{9,21}+4736S_{10,20}+11904S_{11,19}+20864S_{12,18}\\
&+28032S_{13,17}+31616S_{14,16}+32640S_{15,15}\\
&+192S_{1,8,21}+1600S_{1,9,20}+6080S_{1,10,19}+14144S_{1,11,18}+23104S_{1,12,17}\\
&+29376S_{1,13,16}+32064S_{1,14,15}\\ 
&+224S_{2,8,20}+1696S_{2,9,19}+5856S_{2,10,18}+12448S_{2,11,17}+18848S_{2,12,16}\\
&+22752S_{2,13,15}+7872S_{2,14,14}\\ 
&+240S_{3,8,19}+1616S_{3,9,18}+4976S_{3,10,17}+9552S_{3,11,16}+13392S_{3,12,15}+7296S_{3,13,14}\\ 
&+248S_{4,8,18}+1448S_{4,9,17}+3896S_{4,10,16}+6696S_{4,11,15}+4656S_{4,12,14}+1680S_{4,13,13}\\ 
&+252S_{5,8,17}+1236S_{5,9,16}+2844S_{5,10,15}+2328S_{5,11,14}+1344S_{5,12,13}\\ 
&+254S_{6,8,16}+1002S_{6,9,15}+912S_{6,10,14}+672S_{6,11,13}+252S_{6,12,12}\\
&+255S_{7,8,15}+246S_{7,9,14}+210S_{7,10,13}+126S_{7,11,12}+\Phi_4({\mathcal T}_6)\,; \\
\end{split}
\end{equation*}

\vspace{-0.15cm}

\begin{equation*}
\begin{split}
{\mathcal T}_8  &=256S_{9,25}+2560S_{10,24}+11776S_{11,23}+33280S_{12,22}+65536S_{13,21}+97792S_{14,20}\\
&+119296S_{15,19}+128512S_{16,18}+130816S_{17,17}\\
&+384S_{1,9,24}+3584S_{1,10,23}+15360S_{1,11,22}+40448S_{1,12,21}+74496S_{1,13,20}\\
&+104960S_{1,14,19}+122880S_{1,15,18}+129536S_{1,16,17}\\
&+448S_{2,9,23}+3840S_{2,10,22}+15104S_{2,11,21}+36608S_{2,12,20}+62592S_{2,13,19}\\
&+83200S_{2,14,18}+93952S_{2,15,17}+32064S_{2,16,16}\\
&+480S_{3,9,22}+3712S_{3,10,21}+13184S_{3,11,20}+29056S_{3,12,19}\\
&+45888S_{3,13,18}+57728S_{3,14,17}+30624S_{3,15,16}\\
&+496S_{4,9,20}+3392S_{4,10,20}+10688S_{4,11,19}+21184S_{4,12,18}\\
&+30880S_{4,13,17}+20688S_{4,14,16}+7296S_{4,15,15}\\
&+504S_{5,9,20}+2976S_{5,10,19}+8160S_{5,11,18}+14432S_{5,12,17}+11352S_{5,13,16}+6336S_{5,14,15}\\
&+508S_{6,9,19}+2512S_{6,10,18}+5872S_{6,11,17}+5172S_{6,12,16}+3672S_{6,13,15}+1344S_{6,14,14}\\
&+510S_{7,9,18}+2024S_{7,10,17}+1914S_{7,11,16}+1584S_{7,12,15}+924S_{7,13,14}\\
&+511S_{8,9,17}+501S_{8,10,16}+456S_{8,11,15}+336S_{8,12,14}+126S_{8,13,13}
+\Phi_4({\mathcal T}_7)\,. \\
\end{split}
\end{equation*}

\section{Appendix 2: $\cT^{I_{2,3}}_r$, $r=4,\ldots, 7$}\label{appendix2}
\small

Let ${\mathcal T}_r=\cT^{I_{2,3}}_r$. We have

\begin{equation*}
 \begin{split}
   {\mathcal T}_4  & =
 1280 S_{5,12}
 + 1024 S_{7,10}
 + 1408 S_{6,11}
 + 480 S_{89}\\
& + 1056 S_{1,5,11}
 + 1120 S_{1,6,10}
 + 736 S_{179}
 + 240 S_{188}\\
& + 656 S_{2,5,10}
 + 656 S_{269}
 + 368 S_{278}\\
& + 360 S_{359}
 + 328 S_{368}
 + 124 S_{377}\\
& + 180 S_{458}
 + 134 S_{467}\\
& + 75 S_{557}
 + 36 S_{566}
+\Phi_4({\mathcal T}_3)\,; 
 \end{split}
\end{equation*}

\begin{equation*}
 \begin{split}
{\mathcal T}_5  & =
7744 S_{6,15}
 + 8832 S_{7,14}
 + 7168 S_{8,13}
 + 4544 S_{9,12}
 + 1984 S_{10,11}\\
& + 6432 S_{1,6,14}
 + 7232 S_{1,7,13}
 + 5632 S_{1,8,12}
 + 3232 S_{1,9,11}
 + 992 S_{1,10,10}\\
& + 4048 S_{2,6,13}
 + 4448 S_{2,7,12}
 + 3264 S_{2,8,11}
 + 1616 S_{2,9,10}\\
& + 2280 S_{3,6,12}
 + 2416 S_{3,7,11}
 + 1632 S_{3,8,10}
 + 560 S_{3,9,9}\\
& + 1204 S_{4,6,11}
 + 1208 S_{4,7,10}
 + 692 S_{489}\\
& + 602 S_{5,6,10}
 + 542 S_{579}
 + 206 S_{588}\\
&
 + 270 S_{669}
 + 201 S_{678}
+\Phi_4({\mathcal T}_4)\,;
 \end{split}
\end{equation*}

\begin{equation*}
 \begin{split}
  {\mathcal T}_6  & =
46592 S_{7,18}
 + 53888 S_{8,17}
 + 45824 S_{9,16}
 + 32640 S_{10,15}
 + 19200 S_{11,14}
 + 8064 S_{12,13}
\\
& + 38784 S_{1,7,17}
 + 44608 S_{1,8,16}
 + 37248 S_{1,9,15}
 + 25408 S_{1,10,14} 
 + 13568 S_{1,11,13}
 + 4032 S_{1,12,12}
\\
& + 24512 S_{2,7,16}
 + 27936 S_{2,8,15}
 + 22720 S_{2,9,14}
 + 14624 S_{2,10,13}
 + 6784 S_{2,11,12}
\\
& + 13920 S_{3,7,15}
 + 15632 S_{3,8,14}
 + 12256 S_{3,9,13}
 + 7312 S_{3,10,12}
 + 2384 S_{3,11,11}
\\ 
& + 7472 S_{4,7,14}
 + 8200 S_{4,8,13}
 + 6128 S_{4,9,12}
 + 3152 S_{4,10,11}
\\
& + 3864 S_{5,7,13}
 + 4100 S_{5,8,12}
 + 2812 S_{5,9,11}
 + 980 S_{5,10,10}
\\
& + 1932 S_{6,7,12}
 + 1924 S_{6,8,11}
 + 1108 S_{6,9,10}
\\
& + 903 S_{7,7,11}
 + 813 S_{7,8,10}
 + 309 S_{7,9,9}
+\Phi_4({\mathcal T}_5)\,;
 \end{split}
\end{equation*}

\begin{equation*}
 \begin{split}
    {\mathcal T}_7  & =
279808 S_{8,21}
 + 325376 S_{9,20}
 + 282368 S_{10,19}
 + 212224 S_{11,18}\\
 &+ 140544 S_{12,17}
 + 79104 S_{13,16}
 + 32512 S_{14,15}
\\
& + 233088 S_{1,8,20}
 + 270464 S_{1,9,19}
 + 232832 S_{1,10,18}
 + 171392 S_{1,11,17}\\
 &+ 108672 S_{1,12,16}
 + 55680 S_{1,13,15}
 + 16256 S_{1,14,14}
\\
& + 147520 S_{2,8,19}
 + 170560 S_{2,9,18}
 + 145088 S_{2,10,17}\\
&+ 103872 S_{2,11,16}
 + 62272 S_{2,12,15}
 + 27840 S_{2,13,14}
\\
& + 84000 S_{3,8,18}
 + 96544 S_{3,9,17}
 + 80736 S_{3,10,16}\\
 &+ 55776 S_{3,11,15}
 + 31136 S_{3,12,14}
 + 9856 S_{3,13,13}
\\ 
& + 45328 S_{4,8,17}
 + 51600 S_{4,9,16}
 + 42160 S_{4,10,15}
 + 27888 S_{4,11,14}
 + 13536 S_{4,12,13}
\\ 
& + 23688 S_{5,8,16}
 + 26568 S_{5,9,15}
 + 21080 S_{5,10,14}
 + 12928 S_{5,11,13}
 + 4304 S_{5,12,12}
\\
& + 12100 S_{6,8,15}
 + 13284 S_{6,9,14}
 + 10032 S_{6,10,13}
 + 5232 S_{6,11,12}
\\ 
& + 6050 S_{7,8,14}
 + 6388 S_{7,9,13}
 + 4400 S_{7,10,12} 
 + 1540 S_{7,11,11}
\\
& + 2898 S_{8,8,13}
 + 2886 S_{8,9,12}
 + 1662 S_{8,10,11}
+\Phi_4({\mathcal T}_6)\,.
 \end{split}
\end{equation*}

\end{document}